\documentclass[review,onefignum,onetabnum]{siamart190516}




\usepackage{lipsum}
\usepackage{amsmath,amssymb,amsfonts,latexsym,stmaryrd}
\usepackage{graphicx,float}
\usepackage{mathrsfs} 
\usepackage{color}
\usepackage{epstopdf}
\usepackage{algorithmic}
\usepackage{multirow}
\ifpdf
  \DeclareGraphicsExtensions{.eps,.pdf,.png,.jpg}
\else
  \DeclareGraphicsExtensions{.eps}
\fi



\def\O{\Omega}

\def\e{\varepsilon}
\def\g{\gamma}

\def\l{\lambda}

\def\E{E}
\def\VK{V^{\E}}

\renewcommand\sp{\mathop{\mathrm{Sp}}\nolimits}

%

\newcommand{\jump}[1]{\llbracket #1 \rrbracket}

\newcommand{\n}{\boldsymbol{n}}
\usepackage{booktabs}
\usepackage{float}

\newcommand\bu{\boldsymbol{u}}
\newcommand\bv{\boldsymbol{v}}

\newcommand\bn{\boldsymbol{n}}

\def\Vh{V_h}

\def\PiK{\Pi^{\E}}
\def\CM{\mathcal{X}}
\def\CN{\mathcal{Y}}
\def\CE{\mathcal{E}}
\renewcommand\sp{\mathop{\mathrm{sp}}\nolimits}

\def\CT{\mathcal{T}}
\def\CM{\mathcal{X}}
\def\CN{\mathcal{Y}}

\def\O{\Omega}
\def\P{\mathbb{P}}
\def\PiK{\Pi^{\nabla,E}}

\def\Pio{\Pi^{E}}
\def\Vh{V_h}
\def\VK{V^{E}_h}
\def\WK{\widetilde{V}_h^E}
\def\l{\lambda}
\def\g{\gamma}

\renewcommand\div{\mathop{\mathrm{div}}\nolimits}
\renewcommand\div{\mathop{\mathrm{div}}\nolimits}


\def\CT{{\mathcal T}}





\newcommand\R{\mathbb{R}}


\renewcommand\H{\mathrm{H}}
\renewcommand\L{\mathrm{L}}


\renewcommand\O{\Omega}



\renewcommand\div{\mathop{\mathrm{div}}\nolimits}

\renewcommand\sp{\mathop{\mathrm{sp}}\nolimits}



\newcommand{\vertiii}[1]{{\left\vert\kern-0.25ex\left\vert\kern-0.25ex\left\vert #1 
    \right\vert\kern-0.25ex\right\vert\kern-0.25ex\right\vert}}


\nolinenumbers

\newsiamremark{remark}{Remark}
\newsiamremark{hypothesis}{Hypothesis}
\crefname{hypothesis}{Hypothesis}{Hypotheses}
\newsiamthm{claim}{Claim}

\headers{Non-conforming VEM for the acoustic problem}{D. Amigo, F. Lepe and  G. Rivera}

\title{Error analysis for a non-conforming virtual element  discretization of the acoustic problem\thanks{Submitted to the editors DATE.
\funding{DA and FL were partially supported by DIUBB through project 2120173 GI/C Universidad del B\'io-B\'io.  GR was supported by Universidad de Los Lagos Regular R02/21 and ANID-Chile through FONDECYT project 1231619 (Chile).
}}}

\author{Danilo Amigo\thanks{GIMNAP-Departamento de Matem\'atica, Universidad del B\'io-B\'io, Casilla 5-C, Concepci\'on, Chile. 
\texttt{danilo.amigo2101@alumnos.ubiobio.cl}.}
\and
Felipe Lepe\thanks{GIMNAP-Departamento de Matem\'atica, Universidad del B\'io-B\'io, Casilla 5-C, Concepci\'on, Chile. 
\texttt{flepe@ubiobio.cl}.}
\and
Gonzalo Rivera\thanks{Departamento de Ciencias Exactas, Universidad de Los Lagos, Osorno, Chile.
\texttt{gonzalo.rivera@ulagos.cl}}}

\usepackage{amsopn}


\ifpdf
\hypersetup{
  pdftitle={VEM allowing small edges for the acoustic problem},
  pdfauthor={Danilo Amigo, Felipe Lepe, Gonzalo Rivera}
}
\fi




\begin{document}
\maketitle
\nolinenumbers

\begin{abstract}
We introduce non conforming virtual elements to approximate the eigenvalues and eigenfunctions of the two dimensional acoustic vibration problem. 
We  focus our attention on   the pressure formulation of the acoustic vibration problem in order to discretize it with a  suitable non conforming virtual space for $\H^1$.
With the aid of the theory of non-compact operators we prove convergence and spectral correctness of the method. To illustrate the theoretical results, we 
report numerical tests on different polygonal meshes, in order to show the accuracy of the method on the approximation of the spectrum.
\end{abstract}

\begin{keywords}
non conforming virtual element methods, acoustics, a priori error estimates, polygonal meshes.

\end{keywords}

\begin{AMS}
49K20, 
49M25, 
65N12, 
65N15,  
65N25. 
\end{AMS}

\section{Introduction}
\label{sec:introduccion}
Let $\O$ be an open and bounded bidimensional domain with Lipschitz boundary $\partial\O$. The acoustic vibration problem is:  Find  the natural frequency $\omega\in\mathbb{R}$, the displacement $\bu$, and the pressure $p$ on a domain $\Omega\subset\mathbb{R}^{\texttt{d}}$, such that 
\begin{equation}\label{def:acustica}
\left\{
\begin{array}{rcll}
\nabla p-\omega^2\rho \bu & = & \boldsymbol{0}&\text{in}\,\O\\
p+\rho c^2\div\bu & = & 0 &\text{in}\,\O\\
\bu\cdot\boldsymbol{n}&=&0&\text{on}\,\partial\O,
\end{array}
\right.
\end{equation}
where $\rho$ is the density, $c$ is the sound speed, and $\boldsymbol{n}$ is the outward unitary vector. On this case, we are assuming that the fluid in inviscid and hence, the eigenvalues are real. This problem has paid the attention of engineers  and mathematicians due to the large number of applications of such a system in different fields, as the design of noise reduction devices, design of ships, aircrafts, bridges, buildings, etc. Here we mention \cite{MR1993937,MR1770352, MR1342293, HYBi2017NonConf}.

Regarding  the importance of solving  \eqref{def:acustica}, several numerical approaches have emerged revealing the necessity to solve properly the acoustic vibration problem. On this sense, and taking in consideration not only the numerical method but also the context in which the acoustic problem is applicable we mention \cite{MR1770352,MR1342293,MR1993937,MR2086168}  as main references. These references made mention of the fact that the acoustic problem is important to be analyzed when the system is coupled with other media as elastic structures, the design of noise reduction devices, the control of noise, etc. In fact, our studies of  the VEM for the  acoustic equations are in that direction, since precisely the pure pressure formulation of associated to \eqref{def:acustica} allows us to include $\H^1$ spaces that can be coupled with an elastic structure safely on the interface as in \cite{MR1993937}, allowing the presence of small edges  precisely on the contact interface, without  the need to incorporate additional hypotheses on the VEM. Let us mention \cite{amigo2023vem,MR4658607} as part of this research topic.

Regarding to eigenvalue problems, these have been studied with the virtual element method on different contexts and is an ongoing research topic, proving the relevance of the method. On this subject we recall 
\cite{MR4658607,MR3867390,MR4229296,MR4284360,MR3340705, MR3867390, MR4050542}. Particularly for the acoustic problem we refer to \cite{BMRR,MR4550402} and for NCVEM applied to eigenvalue problems we have \cite{GMV2018} which is, for the best of our knowledge, the first work where NCVEM is applied to eigenvalue problems, inspired by the pioneer work of \cite{MR3507277}.

Let us focus on  \cite{GMV2018}. Here the authors have proved, for the Laplace eigenvalue problem with null Dirichlet boundary condition that the NCVEM is convergent and spurious free. These are  of course the desirable features of any numerical method to approximate eigenvalue problems. Moreover, the analysis is performed under the approach of the compact operators theory of \cite{BO}. In our case, despite to the fact that we will avoid the displacement of the fluid leading to a Laplace eigenvalue problem, according to \eqref{def:acustica} our boundary condition is different and hence, our solution operators, continuous and discrete,  must be different compared with those  on  \cite{GMV2018}. This fact will introduce a substantial difference with our paper, since in our case we need to employ the non-compact theory of operators \cite{DNR1,DNR2} in order to derive convergence and error estimates for our method due to the non-conformity of our VEM spaces. It is important to note that all works involving non-conforming virtual element method for spectral problems available in the literature use classical compact operator theory (see \cite{ADAK,GMV2018} for instance), so this is the first work in which non-compact operator theory would be used.


The article is organized in the following way: in Section \ref{sec:model} we present the acoustic model problem written in terms of the pressure. We present the variational formulation, the solution operator, regularity results and the corresponding spectral characterization. The core of our paper begins in Section  \ref{sec:virtual} where we introduce the NCVEM. This implies the assumptions on the polygonal meshes, elements of the mesh, jumps, local and global virtual spaces and their degrees of freedom and hence, the discrete bilinear forms that allows us to define the discrete eigenvalue problem. These tools lead to the analysis of error estimates for the eigenvalues and eigenfunctions which we derive according to the non-conforming nature of the proposed method. Finally in Section \ref{sec:numerics} we report a series of numerical examples to assess the performance of the method on different domains and polygonal meshes. 

 \section{The  model problem}
 \label{sec:model}
\label{sec:pressure}
The intention of this paper is to study system \eqref{def:acustica} in the most simple way in order to use the NCVEM of our interest. To do this task, using the second equation of \eqref{def:acustica} we are able to eliminate the displacement from \eqref{def:acustica}  leading to the following problem; find the pressure $p$ and the frequency $\omega$ such that
\begin{equation}\label{def:acustica_pressure}
\left\{
\begin{aligned}
 c^2\div \left(\frac{1}{\rho}\nabla p\right)+\frac{\omega^2}{\rho} p& = &0&\text{ in }\,\O,\\
\nabla p\cdot\boldsymbol{n}&=&0&\text{ on }\,\partial\O,
\end{aligned}
\right.
\end{equation}

 A variational formulation for \eqref{def:acustica_pressure} is: Find $\omega\in\mathbb{R}$ and $0\neq p\in \H^1(\O)$ such that
\begin{equation*}
\displaystyle c^2\int_{\O}\frac{1}{\rho}\nabla p\cdot\nabla v=\omega^2\int_{\O}\frac{1}{\rho}pv \quad\forall v\in \H^1(\O).
\end{equation*}
Let us define the bilinear forms $a:\H^1(\O)\times \H^1(\O)\rightarrow\mathbb{R}$ and $b:\H^1(\O)\times \H^1(\O)\rightarrow\mathbb{R}$, which are given by
\begin{equation*}
a(q,v):=c^2\int_{\O}\frac{1}{\rho}\nabla q\cdot\nabla v\quad \text{and}\quad b(q,v):=\int_{\O}\frac{1}{\rho}qv\quad\forall q,v \in\H^1(\O).
\end{equation*}
With a shift argument and setting $\lambda := \omega^{2}+1$, we arrive to the following problem: Find $\lambda\in\mathbb{R}$ and $0\neq p\in \H^1(\O)$ such that
\begin{equation}
\label{eq:pression}
\widehat{a}(p,v)=\lambda b(p,v)\quad\forall v\in \H^1(\O),
\end{equation}
where the bilinear form $\widehat{a}:\H^1(\O)\times \H^1(\O)\rightarrow\mathbb{R}$ is defined for all $q,v\in \H^1(\O)$ by 
$$ \widehat{a}(q,v) := a(q,v) + b(q,v).$$

It is easy to check that $\widehat{a}(\cdot,\cdot)$ is coercive in $\H^1(\O)$. This allow to us to introduce the solution operator $T: \L^2(\O)\rightarrow \H^1(\O)$, defined by 
$Tf=\widehat{p}$, where $\widehat{p}\in \H^1(\O)$ is the solution of the corresponding  associated source problem: Find $\widehat{p}\in\H^1(\O)$ such that
\begin{equation}
\label{eq:source_pr}
\widehat{a}(\widehat{p},v)=b(f,v)\quad\forall v\in\H^1(\O).
\end{equation}

The regularity results that we need for our purposes are the ones related to the Laplace problem with pure null boundary conditions. This regularity is stated in the following lemma (see \cite[Lemma 2.2]{MR3340705} and \cite{MR0775683}).
\begin{lemma}
\label{lmm:regularity}
There exists $r_{\Omega}>1/2$ such that the following results hold:
\begin{enumerate}
\item For all $f\in\H^1(\O)$ and for all $r\in (1/2,r_{\O})$ the solution $\widehat{p}$ of \eqref{eq:source_pr} satisfies $\widehat{p}\in\H^{1+s}(\O)$ with $s:=\min\{r,1\}$. Moreover, there exists a constant $C>0$ such that
\begin{equation*}
\|\widehat{p}\|_{1+s,\O}\leq C\|f\|_{1,\O};
\end{equation*}
\item If $p$ is an eigenfunction of problem \eqref{eq:pression} with eigenvalue $\lambda$, for all $r\in(1/2,r_{\O})$ there hold that $p\in\H^{1+r}(\O)$ and also, there exists a constant $C>0$, depending on $\lambda$, such that
\begin{equation*}
\| p\|_{1+r,\O}\leq C\|p\|_{1,\O}.
\end{equation*}
\end{enumerate} 
\end{lemma}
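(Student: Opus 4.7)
The plan is to reduce both claims to standard regularity theory for the Neumann problem associated with a uniformly elliptic second-order operator on a Lipschitz (polygonal) domain, as in the references cited by the authors (Grisvard \cite{MR0775683} and the Dirichlet analogue in \cite{MR3340705}). The first step is to pass from the variational formulation \eqref{eq:source_pr} to the strong form. Because $\rho$ and $c$ are smooth and bounded away from zero, integration by parts shows that $\widehat{p}$ satisfies
\begin{equation*}
-c^{2}\div\!\Bigl(\tfrac{1}{\rho}\nabla \widehat{p}\Bigr)+\tfrac{1}{\rho}\widehat{p}\;=\;\tfrac{1}{\rho}f\quad\text{in }\O,\qquad \nabla\widehat{p}\cdot\n=0\quad\text{on }\partial\O,
\end{equation*}
in the sense of distributions. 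This is a coercive, uniformly elliptic problem with a homogeneous Neumann boundary condition.

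For (1), I would invoke the Jerison--Kenig / Grisvard shift theorem for the Neumann problem on a Lipschitz domain: there exists $r_{\Omega}>1/2$, determined only by the domain (by the largest reentrant angle in the polygonal case), such that the solution operator maps $\L^{2}(\O)$ continuously into $\H^{1+r}(\O)$ for every $r\in(1/2,r_{\Omega})$, and it maps $\H^{1}(\O)$ continuously into $\H^{1+s}(\O)$ with $s=\min\{r,1\}$. Here the cap at $s=1$ reflects the fact that, even when the data lie in $\H^{1}$, corner singularities prevent one from going beyond $\H^{1+r_{\Omega}}$. The claimed estimate $\|\widehat{p}\|_{1+s,\O}\le C\|f\|_{1,\O}$ is exactly the continuity of this inverse operator, combined with the uniform boundedness of $1/\rho$.

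For (2), I would bootstrap using $T$. An eigenfunction satisfies $p=\lambda Tp$, so $p$ itself is the solution of a source problem with right-hand side $\lambda\,\rho^{-1}p\in\L^{2}(\O)$. Applying the $\L^{2}\to\H^{1+r}$ estimate from step (1) yields $p\in\H^{1+r}(\O)$ for $r\in(1/2,r_{\Omega})$, with a constant that depends linearly on $\lambda$, which gives the stated bound $\|p\|_{1+r,\O}\le C\|p\|_{1,\O}$ with $C=C(\lambda)$.

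The main obstacle is the invocation of the Neumann regularity shift itself: showing that a single $r_{\Omega}>1/2$ exists uniformly for the Neumann problem on a Lipschitz domain, and that the shift gain is capped at $s=1$ for $\H^{1}$ data, is nontrivial and relies on the full machinery of \cite{MR0775683}. Once this black-box result is in hand, however, the rest of the argument is just a direct application to our specific operator and a one-step bootstrap for the eigenvalue problem.
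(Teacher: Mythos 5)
The paper does not actually prove this lemma; it is quoted verbatim from the literature (Lemma 2.2 of \cite{MR3340705}, resting on Grisvard \cite{MR0775683}), so your task was essentially to reconstruct the standard argument behind that citation, and your sketch does match it: pass to the strong Neumann form, invoke the elliptic shift theorem for the (co-normal) Neumann problem on a Lipschitz/polygonal domain to get item (1), and bootstrap via $p=\lambda Tp$ for item (2).

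One step deserves more care. For item (2) you apply ``the $\L^{2}\to\H^{1+r}$ estimate'' once to the data $\lambda\rho^{-1}p\in\L^{2}(\O)$ and conclude $p\in\H^{1+r}(\O)$ for all $r\in(1/2,r_{\Omega})$. But $\L^{2}$ data can never yield better than $\H^{2}$ regularity, whereas the lemma deliberately drops the cap $s=\min\{r,1\}$ in item (2): the whole point of stating the two items separately is that $r_{\Omega}$ may exceed $1$ (e.g.\ on a convex polygon, where $r_{\Omega}=\pi/\omega_{\max}>1$), so eigenfunctions can be smoother than $\H^{2}$. To reach $r>1$ you must iterate: first $p\in\H^{2}(\O)$ from the $\L^{2}$ shift, then feed the improved data $\lambda\rho^{-1}p\in\H^{1}(\O)$ (or smoother) back into the higher-order shift theorem, with the global regularity ultimately limited only by the corner exponents; equivalently, one invokes Grisvard's singular-function decomposition at the vertices directly. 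This is a routine but genuinely necessary extra step, and as written your one-shot application does not deliver the full range $r\in(1/2,r_{\Omega})$ claimed in item (2). The rest of the argument, including the derivation of the strong form and the reduction of item (1) to the continuity of the inverse operator, is fine.
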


In virtue of Lemma \ref{lmm:regularity}, the solution operator $T$  results to be compact due to the compact inclusion of $\H^{1+s}(\O)$ onto $\H^1(\O)$ and self-adjoint with respect to $\widehat{a}(\cdot,\cdot)$. We observe that $(\lambda,p) \in \mathbb{R}\times \H^1(\O)$ solves \eqref{eq:pression} if and only if $(\mu,p) \in \mathbb{R}\times \H^1(\O)$ is an eigenpair of $T$, with $\mu := 1/\lambda$. Finally, since we have the additional regularity for the eigenfunctions, the following spectral characterization of $T$ holds.
\begin{lemma}[Spectral Characterization of $T$]
The spectrum of $T$ satisfies $\sp(T) = \{0,1\} \cup \{\mu_k\}_{k\in\mathbb{N}}$, where $\{\mu_k\}_{k\in\mathbb{N}}$ is a sequence of real and positive eigenvalues that converge to zero, according to their respective multiplicities.
\end{lemma}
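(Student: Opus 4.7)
The plan is to apply the classical spectral theorem for compact self-adjoint operators to $T$, viewed as an operator on $\H^1(\O)$ (via the inclusion $\H^1(\O)\hookrightarrow\L^2(\O)$) equipped with the $\widehat{a}$--inner product, which is equivalent to the standard one thanks to coercivity. I would proceed in four steps: verify self-adjointness, establish compactness, invoke the spectral theorem to obtain the sequence structure, and finally identify the two distinguished spectral values $\mu=1$ and $\mu=0$.

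Self-adjointness with respect to $\widehat{a}(\cdot,\cdot)$ is immediate from the symmetry of both bilinear forms: for $f,g\in\H^1(\O)$, the definition of $T$ gives
\begin{equation*}
\widehat{a}(Tf,g)=b(f,g)=b(g,f)=\widehat{a}(Tg,f)=\widehat{a}(f,Tg).
\end{equation*}
Compactness is a direct consequence of Lemma~\ref{lmm:regularity}(1): $T$ maps $\H^1(\O)$ boundedly into $\H^{1+s}(\O)$, so composing with the compact Rellich--Kondrachov embedding $\H^{1+s}(\O)\hookrightarrow\H^1(\O)$ yields a compact operator on $\H^1(\O)$. The spectral theorem then produces a decomposition consisting of $0$ together with a sequence of real, finite-multiplicity eigenvalues accumulating only at zero.

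Positivity of the non-zero eigenvalues and the identification of the endpoint $\mu=1$ both follow from a Rayleigh-quotient argument: if $Tp=\mu p$ with $p\neq 0$, testing the relation $\widehat{a}(Tp,p)=b(p,p)$ gives
\begin{equation*}
\mu=\frac{b(p,p)}{\widehat{a}(p,p)}=\frac{b(p,p)}{a(p,p)+b(p,p)}\in(0,1],
\end{equation*}
with equality $\mu=1$ exactly when $a(p,p)=0$, i.e., when $p$ is constant. Since nonzero constants belong to $\H^1(\O)$ and evidently satisfy $T(\mathrm{const})=\mathrm{const}$, the value $\mu=1$ is indeed an eigenvalue, corresponding through the shift $\lambda=\omega^2+1$ to the rigid acoustic mode $\omega=0$.

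The one mild subtlety, and the point I expect to require the most care, is that $\mu=0$ belongs to $\sp(T)$ without being an eigenvalue: $T$ is injective, since $Tf=0$ forces $b(f,v)=0$ for every $v\in\H^1(\O)$ and hence $f=0$; nevertheless $0$ appears in the spectrum as the accumulation point of the infinite sequence $\{\mu_k\}_{k\in\N}$, a standard feature of compact operators whose range is not finite-dimensional.
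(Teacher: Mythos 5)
Your proposal is correct and follows essentially the same route the paper intends: the paper itself only sketches the argument (compactness of $T$ on $\H^1(\O)$ via Lemma~\ref{lmm:regularity} and the compact embedding $\H^{1+s}(\O)\hookrightarrow\H^1(\O)$, plus self-adjointness with respect to $\widehat{a}(\cdot,\cdot)$, followed by the spectral theorem for compact self-adjoint operators). Your additional details --- the Rayleigh-quotient identification of $\mu=1$ with the constant eigenfunctions and the observation that $0\in\sp(T)$ is an accumulation point rather than an eigenvalue since $T$ is injective --- are accurate and fill in exactly what the paper leaves implicit.
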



\section{The virtual element method}
\label{sec:virtual}
Let us now introduce the ingredients to establish the virtual element method  for the eigenvalue problem \eqref{eq:pression}. First we recall the mesh construction and the assumptions considered in \cite{BBCMMR2013} for the virtual element
method. 
Let $\left\{\CT_h\right\}_h$ be a sequence of decompositions of $\Omega$ into polygons which we denote by $E$ and let $\CE_h$ denote the skeleton of the partition. By $\CE_h^o$ and $\CE_h^\partial$ we will refer to the set of interior and boundary edges, respectively. Let us denote by $h_E$   the diameter of the element $E$ and $h$ the maximum of the diameters of all the elements of the mesh, i.e., $h:=\max_{E\in\Omega}h_E$.  Moreover, for simplicity, in what follows we assume that $\rho $ is  piecewise constant with  respect to the decomposition $\mathcal{T}_h$, i.e., they are  piecewise constants for all $E\in \mathcal{T}_h$ (see for instance \cite{BLR2017}).

 For the analysis of the VEM, we will make as in \cite{BBCMMR2013} the following
assumptions: 
\begin{itemize}
\item \textbf{A1.} There exists $\g>0$ such that, for all meshes
$\CT_h$, each polygon $\E\in\CT_h$ is star-shaped with respect to a ball
of radius greater than or equal to $\g h_{\E}$.
\item \textbf{A2.} The distance between any two vertexes of $\E$ is $\geq Ch_\E$, where $C$ is a positive constant.
\end{itemize}

For any simple polygon $ E$ we define 
\begin{align*}
	\widetilde{V}_h^E:=\left\{v_h\in \H^1(E):\Delta v_h \in \mathbb{P}_k(E),
	\dfrac{\partial v_h}{\partial\bn }\in \mathbb{P}_{k-1}(e) \  \forall e \in \partial E \right\}.
\end{align*}

Now, in order to choose the degrees of freedom for $\widetilde{V}_h^E$ we define
\begin{itemize}
	\item $\mathcal{D}^1$: the moments of $v_h$ of order up to $k-1$ on each edge $e\in\partial E$:
	$$\dfrac{1}{|e|}\int_e v_hp_{k-1} \text{ for all } p_{k-1}\in \P_{k-1}(e) \text{ and for all }e\in\partial E;$$
	\item  $\mathcal{D}^2$: the moments of $v_h$ of order up to $k-2$ on $E$:
	$$\dfrac{1}{|E|}\int_E v_h p_{k-2} \text{ for all }p_{k-2}\in \P_{k-2}(E).$$
\end{itemize}
as a set of linear operators from $\widetilde{V}_h^E$ into $\R$. In \cite{AABMR13} it was established that $\mathcal{D}^{1}$ and $\mathcal{D}^{2}$ constitutes a set of degrees of freedom for the space $\widetilde{V}_h^E$.

 On the other hand, we define  the projector $\PiK_k:\ \WK\longrightarrow\P_k(E)\subseteq\WK$ for
each $v_{h}\in\WK$ as the solution of 
\begin{equation*}
\int_E (\nabla\PiK_k v_{h}-\nabla v_{h})\cdot\nabla q_{k}=0
\quad\forall q_{k}\in\P_k(E),\qquad \overline{\PiK_k v_{h}}=\overline{v_{h}},
\end{equation*}
where  for any sufficiently regular
function $v$, we set $\overline{v}:=\vert\partial E\vert^{-1}\left(v,1\right)_{0,\partial E}.$
We observe that the term $\PiK_k v_{h}$ is well defined and computable from the degrees of freedom  of $v_h$ given by $\mathcal{D}^1$ and $\mathcal{D}^{2}$. In addition, the projector $\PiK_k$ satisfies the identity  $\PiK_k(\P_{k}(E))=\P_{k}(E)$ (see for instance \cite{AABMR13}).

We are now in position  to introduce our local virtual space
\begin{equation*}\label{Vk}
\VK
:=\left\{v_{h}\in 
\WK: \displaystyle \int_E \PiK_k v_{h}p_{k}=\displaystyle \int_E v_{h}p_{k},\quad \forall p_{k}\in 
\mathbb{P}_k(E)\setminus \P_{k-2}(E)\right\}.
\end{equation*}

Since $\VK\subset \WK$, the operator $\PiK$ is well defined on $\VK$ and computable  only on the basis of the output values of the operators in $V_h^E$.
In addition, due to the particular property appearing in definition of the space $\VK$, it can be seen that $\forall p_{k} \in \mathbb{P}_k(E)$ and $\forall v_h\in \VK$ the term $(v_h,p_{k})_{0,E}$
is computable from $\PiK_k v_h$, and hence  the  ${\mathrm L}^2(E)$-projector operator $\Pio_k: \VK\to \P_k(E)$ defined  by
$$\int_E \Pio_kv_h p_{k}=\int_E v_h p_{k}\qquad \forall p_{k}\in \mathbb{P}_k(E),$$
depends only on the values of the degrees of freedom of $v_h$ and hence is computable only on the basis of the output values of the operators $\mathcal{D}^{1}$ and $\mathcal{D}^2$. 

Now, for any $s>0$, we  introduce the broken Sobolev space of order $s$
\begin{equation*}\H^s(\CT_h):=\displaystyle\prod_{E\in\CT_h}\H^s(E)=\left\{v\in \L^2(\O): v|_E\in\H^s(E)\right\},
\end{equation*}
endowed with  the broken $\H^s$-norm:
$$\|v\|_{s,h}^2=\sum_{E\in\CT_h}\|v\|_{s,E}^2\qquad \forall v\in \H^s(\CT_h),$$
and for $s=1$ the broken $\H^1$-seminorm
$$|v|_{1,h}^2=\sum_{E\in\CT_h}\|\nabla v\|_{0,E}^2\qquad \forall v\in \H^1(\CT_h).$$

Let $e\subset \partial E^+\cap\partial E^-$ be the internal side shared by elements $E^+_e$ and $E^-_e$, and $v$ a function that belongs to $\H^1(\CT_h)$. We denote the traces of $v$ on $e$ from the interior of elements $E^{\pm}_e$ by $v^{\pm}_e$, and the unit normal vectors to $e$ pointing from $E^{\pm}_e$ to $E^{\mp}_e$ by $\n^{\pm}_e$. Then, we introduce the jump operator $\jump{v}= v^+_e\n^+_e+v^-_e\n^-_e$ at each internal side $\e\in \CE^0$ and $\jump{v} = v_e\n_e$ at each boundary side $e\in \CE^{\partial}$.

It is practical to introduce a subspace $\H^1(\CT_h)$ that incorporates inherent continuity. For $k\geq 1$, we define
\begin{equation*}\label{eq_subsetcont}
\H^{1,nc}(\CT_h,k):=\left\{v\in \H^1(\CT_h):\int_e\jump{v}\cdot\n_eq_{k-1}=0\;\ \forall q_{k-1}\in\P_{k-1}(e),\,\ \forall e\in \CE_h\right\}.
\end{equation*} 

Finally, for every decomposition $\CT_h$ of $\Omega$ into simple polygons $ E$ we define the global virtual space
\begin{equation}
\label{eq:globa_space}
\Vh:=\left\{v_h\in \H^{1,nc}(\CT_h,k):\ v_h|_{ E}\in\VK\quad\forall E\in\CT_h\right\}.
\end{equation}
\subsection{Discrete bilinear forms}
In order to propose the discrete counterparts of $a(\cdot,\cdot)$ and $b(\cdot,\cdot)$, we split these  forms as follows
\begin{equation*}
a(q,v) = \displaystyle{\sum_{E \in \mathcal{T}_{h}} a^{E}(q,v)}, \quad b(q,v) = \displaystyle{\sum_{E \in \mathcal{T}_{h}} b^{E}(q,v)}.
\end{equation*}
First, we consider any symmetric positive definite bilinear form $S^{\E}: \VK \times \VK \rightarrow \mathbb{R}$ that satisfies
\begin{equation}\label{eq:stability}
c_0 a^{\E}(q_h,q_h)\leq S^{\E}(q_h,q_h)\leq c_1a^{\E}(q_h,q_h)\qquad \forall q_h\in\VK \cap \text{ker}(\PiK),
\end{equation}
with $c_0$ and $c_1$ being positive constants depending on the mesh assumptions. Then, we define
\begin{equation*}
\label{ec:ahch}
a_h(q_h,v_h)
:=\sum_{\E\in\CT_h}a_h^{\E}(q_h,v_h),
\end{equation*}
where $a_h^{\E}(\cdot,\cdot)$ is the bilinear form defined from
$\VK\times\VK$ onto $\mathbb{R}$ by
\begin{equation*}
\label{21}
a_h^{\E}(q_{h},q_{h})
:=a^{\E}\big(\PiK_k q_h,\PiK_k v_h\big)
+S^{\E}\big(q_h-\PiK_k q_h,v_h-\PiK_k v_h\big),
\end{equation*}
for every $q_h,v_h\in\VK$.
Moreover, it is easy to check that $a_h^{\E}(\cdot,\cdot)$ must admit the following properties:
\
\begin{itemize}
\item Consistency: For all $q_h\in\mathbb{P}_k(\E)$ and $v_h\in\VK$
\begin{equation*}
a_h^{\E}(q_h,v_h)=a^{\E}(q_h,v_h),
\end{equation*}
\item Stability: There exist two positive constants $\alpha_*$ and $\alpha^*$, independent of $h$ and $\E$, such that
\begin{equation*}
\alpha_*a^{\E}(q_h,q_h)\leq a_h^{\E}(q_h,q_h)\leq\alpha^*a^{\E}(q_h,q_h)\quad\forall q_h\in\VK.
\end{equation*}
\end{itemize}

On the other hand, to introduce the local discrete counterpart of $b^{E}(q_{h},v_{h})$, we consider any symmetric and semi-positive definite bilinear form $S_{0}^{E} : V_{h}^{E}\times V_{h}^{E} \rightarrow \mathbb{R}$ satisfying
\begin{equation*}
b_{0}b^{E}(v_{h},v_{h}) \leq S_{0}^{E}(v_{h},v_{h}) \leq b^{1}b^{E}(v_{h},v_{h}) \quad \forall v_{h} \in V_{h}^{E},
\end{equation*}
where $b_{0}$ and $b^{1}$ are two   positive constants. Then, we define for each polygon $E$ the local (and computable) bilinear form $b_{h}^{E} : V_{h}^{E}\times V_{h}^{E} \rightarrow \mathbb{R}$ by
\begin{equation*}
b_{h}^{E}(q_{h},v_{h}) = b^{E}(\Pio_k q_{h}, \Pio_k v_{h}) + S_{0}^{E}(q_{h} - \Pio_k q_{h}, v_{h} - \Pio_k v_{h})\quad \forall q_{h}, v_{h} \in V_{h}^{E}.
\end{equation*}
We remark that the discrete bilinear form $b_{h}^{E}(\cdot,\cdot)$ satisfies the classical properties of consistency and stability (which are similar to those satisfied by $a_h^{E}(\cdot,\cdot)$). In particular, there exists two positive constants $\beta_{*}, \beta^*$ independent of $h$ and E, such that 
\begin{equation*}
\beta_*b^{\E}(q_h,q_h)\leq b_h^{\E}(q_h,q_h)\leq\beta^*b^{\E}(q_h,q_h)\quad\forall q_h\in\VK.
\end{equation*} 
Then, the global discrete bilinear forms $a_{h}(\cdot,\cdot)$ and $b_{h}(\cdot,\cdot)$ are be expressed componentwise as follows
 \begin{equation*}
\label{eq:bilineal_form_B_split_{h}}
\begin{split}
a_{h}(q_{h},v_{h}):
= \sum_{ E\in\CT_h} a_{h}^{E}( q_{h}, v_{h}), \quad
b_{h}(q_h,v_h) := \sum _{E\in\CT_h} b_{h}^{E}(q_h,v_h), \\
\widehat{a}_{h}(q_{h},v_{h}) := \sum_{ E\in\CT_h} a_{h}^{E}( q_{h}, v_{h}) + b_{h}^{E}(q_h,v_h).
\end{split}
\end{equation*}

\begin{remark}
Let us remark that the definition of $b_h(\cdot,\cdot)$ is needed in order to obtain a $V_h$-coercive bilinear form on the left hand side, namely, $\widehat{a}_h(\cdot,\cdot)$. However, for the  numerical experiments is not necessary to define $b_h(\cdot,\cdot)$ with the stabilization term $S_0^E(\cdot,\cdot)$,  because at computational level,  there is no need to solve the problem with the shifted formulation.\end{remark}

Now, according to \cite{MR3507277, GMV2018}, the nature of the NCVEM demands the introduction of the conformity error term, denoted by $\mathcal{N}_h(\cdot,\cdot)$ and defined, for the solution  $\widehat{p}\in\H^1(\O)$
of \eqref{eq:source_pr} and $v_h\in\H^{1,nc}(\CT_h)$, by
\begin{equation*}
\mathcal{N}_h(\widehat{p},v_h):=\widehat{a}(\widehat{p},v_h)-b(f,v_h)=\sum_{\ell\in\mathcal{E}_h}\frac{c^2}{\rho}\int_{\ell}\nabla \widehat{p}\cdot\jump{v_h}.
\end{equation*}
The conformity error term satisfies the following estimate (see \cite[Lemma 3.2]{GMV2018}).
\begin{lemma}
\label{lmm:bound_Nh}
Under assumptions {\bf A1} and {\bf A2} , let $\widehat{p}\in\H^{1+r}(\O)$ with $s>1/2$ be the solution of \eqref{eq:source_pr} and let  $v_h\in\H^{1,nc}(\CT_h)$. Then, there exists a constant $C>0$ depending on $c^2$, $\rho$, and the mesh regularity such that 
\begin{equation*}
|\mathcal{N}_h(\widehat{p},v_h)|\leq Ch^t|\widehat{p}|_{1+s,\O}|v_h|_{1,h},
\end{equation*}
where $t:=\min\{s,k\}$.
\end{lemma}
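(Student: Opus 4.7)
The plan is to exploit the defining property of $\H^{1,nc}(\mathcal{T}_h,k)$, namely that $\jump{v_h}\cdot\boldsymbol{n}_e$ is $\L^2(e)$-orthogonal to $\P_{k-1}(e)$ on every edge, together with polynomial approximation estimates on edges and on elements. First, I would isolate the boundary edges: on $\ell\in\mathcal{E}_h^{\partial}$ the Neumann condition $\nabla\widehat{p}\cdot\boldsymbol{n}=0$ inherited from \eqref{eq:source_pr} makes each contribution vanish. Therefore it suffices to estimate
\begin{equation*}
\mathcal{N}_h(\widehat{p},v_h)=\sum_{\ell\in\mathcal{E}_h^o}\frac{c^2}{\rho}\int_{\ell}\nabla\widehat{p}\cdot\jump{v_h},
\end{equation*}
and I will discard the constant $c^2/\rho$ since $\rho$ is piecewise constant and bounded away from zero.

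Fix an interior edge $\ell$ shared by $E^+,E^-\in\mathcal{T}_h$. Because $\jump{v_h}$ is directed along $\boldsymbol{n}_\ell$, one has $\int_\ell \nabla\widehat{p}\cdot\jump{v_h}=\int_\ell (\nabla\widehat{p}\cdot\boldsymbol{n}_\ell)(v_h^+ - v_h^-)$. The non-conformity condition in the definition of $\H^{1,nc}(\mathcal{T}_h,k)$ says that $v_h^+-v_h^-$ is $\L^2(\ell)$-orthogonal to $\P_{k-1}(\ell)$, so for the edge $\L^2$-projection $\Pi_{\ell}^{k-1}$ onto $\P_{k-1}(\ell)$ we may write
\begin{equation*}
\int_\ell(\nabla\widehat{p}\cdot\boldsymbol{n}_\ell)(v_h^+-v_h^-)=\int_\ell\bigl((\nabla\widehat{p}\cdot\boldsymbol{n}_\ell)-\Pi_\ell^{k-1}(\nabla\widehat{p}\cdot\boldsymbol{n}_\ell)\bigr)(v_h^+-v_h^-),
\end{equation*}
and apply Cauchy--Schwarz to split this into $\|\nabla\widehat{p}\cdot\boldsymbol{n}_\ell-\Pi_\ell^{k-1}(\nabla\widehat{p}\cdot\boldsymbol{n}_\ell)\|_{0,\ell}$ times $\|v_h^+-v_h^-\|_{0,\ell}$.

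For the jump factor, the orthogonality already forces $v_h^+$ and $v_h^-$ to share the same edge mean, which lets me write $\|v_h^+-v_h^-\|_{0,\ell}\le \sum_{\pm}\|v_h^\pm-\Pi_\ell^0 v_h^\pm\|_{0,\ell}$ and then invoke a scaled trace/Poincar\'e-type inequality on each neighbouring polygon, using assumption \textbf{A1}, to conclude $\|v_h^+-v_h^-\|_{0,\ell}\le C h_\ell^{1/2}(|v_h|_{1,E^+}+|v_h|_{1,E^-})$. For the first factor, I would use the standard polynomial approximation estimate on $\ell$ (obtained from element approximation plus a scaled trace inequality), giving
\begin{equation*}
\|\nabla\widehat{p}\cdot\boldsymbol{n}_\ell-\Pi_\ell^{k-1}(\nabla\widehat{p}\cdot\boldsymbol{n}_\ell)\|_{0,\ell}\le C h_\ell^{t-1/2}|\widehat{p}|_{1+s,E^+\cup E^-},
\end{equation*}
with $t=\min\{s,k\}$. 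Multiplying these bounds and summing over $\ell\in\mathcal{E}_h^o$ via a discrete Cauchy--Schwarz inequality (noting that each element appears in at most a bounded number of edges, thanks to \textbf{A1}--\textbf{A2}) produces the desired estimate $|\mathcal{N}_h(\widehat{p},v_h)|\le C h^t|\widehat{p}|_{1+s,\Omega}|v_h|_{1,h}$.

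The main technical obstacle is the edge polynomial approximation step when $s\in(1/2,1)$, since integer-order Bramble--Hilbert arguments do not directly apply and one must use a fractional Sobolev approximation result together with a trace inequality adapted to star-shaped polygons. This is exactly the step where mesh regularity \textbf{A1}--\textbf{A2} and the restriction $s>1/2$ enter the constant; everything else is a careful bookkeeping of Cauchy--Schwarz and summation over the skeleton.
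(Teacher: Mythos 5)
Your argument is correct and is essentially the proof the paper relies on: the paper does not prove this lemma itself but defers to \cite[Lemma 3.2]{GMV2018}, and the proof there is exactly this standard non-conforming consistency argument --- drop the boundary edges via the natural Neumann condition, insert the edge $\L^2$-projection onto $\P_{k-1}(\ell)$ using the orthogonality of the jumps (including the equality of edge means coming from orthogonality to constants), and combine scaled fractional trace inequalities with polynomial approximation on the star-shaped elements, summing over the skeleton with discrete Cauchy--Schwarz. Nothing further is needed.
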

\subsection{Spectral discrete problem}
Now we introduce the VEM discretization of problem \eqref{eq:pression}. To do this task, we require the global space $V_h$
defined in \eqref{eq:globa_space} together with the assumptions introduced in Section \ref{sec:virtual}.

Setting $\lambda_{h} := \omega_{h}^{2}+1$, the discrete spectral problem reads as follows: Find $\lambda_h\in\mathbb{R}$ and $0\neq p_h\in V_h$ such that
\begin{equation}
\label{eq:spectral_disc}
\widehat{a}_{h}(p_h,v_h)=\lambda_{h} b_h(p_h,v_h) \quad \forall v_h \in V_h.
\end{equation}

It is possible to prove that $\widehat{a}_h(\cdot,\cdot)$ is $V_h$-coercive. Indeed, for $v_h\in V_h$, we have
\begin{multline*}
\label{eq:ah_coercive}
\widehat{a}_h(v_h,v_h)=\sum_{E\in\CT_h}a_h^E(v_h,v_h)+b_h^E(v_h,v_h)\\
 \geq \underbrace{\frac{C}{\rho}\min\{ c^2,1\}\min\{\alpha_*,\beta_*\}}_{\widetilde{C}}\sum_{E\in \CT_h}\|v_h\|_{1,E}^2=\widetilde{C}\|v_h\|_{1,h}^2.
\end{multline*}
Let  us introduce the discrete solution operator $T_h : \L^2(\O) \rightarrow \Vh$, defined by $T_h f_h := \widehat{p}_h$, where $\widehat{p}_h$ is the unique solution of the corresponding associated source problem: Find $\widehat{p}_h \in \Vh$ such that 
\begin{equation*}
\widehat{a}_h(\widehat{p}_h,v_h) := b_h(f_h,v_h) \quad \forall v_h \in \Vh,
\end{equation*}
which according to Lax-Milgram's lemma is well defined.

No we introduce some necessary technical results that are useful to analyze convergence and  error estimates of the method.
\begin{lemma}[Existence of a virtual approximation operator]
\label{lmm:polyaprox}
If assumption {\bf A1} is satisfied, then  for every $s$ with 
$0\le s\le k$ and for every $q\in\H^{1+s}(E)$, there exists
$q_{\pi}\in\mathbb{P}_k(\E)$ such that
$$
\left\|q-q_{\pi}\right\|_{0,\E}
+h_{\E}\left|q-q_{\pi}\right|_{1,\E}
\leq C h_{\E}^{1+s}\left\|q\right\|_{1+s,\E},
$$
where the   constant  $C>0$ depends only on mesh regularity constant  $\gamma$.
\end{lemma}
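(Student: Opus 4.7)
The plan is to obtain $q_\pi$ as the averaged (Scott–Dupont) Taylor polynomial of $q$ of degree $k$, a construction which relies precisely on the star-shapedness hypothesis \textbf{A1}. Since $E$ is star-shaped with respect to a ball $B\subset E$ of radius $\varrho_E\geq\gamma h_E$, one defines, for a fixed cut-off $\eta\in C_0^\infty(B)$ with unit integral,
\begin{equation*}
q_\pi(\bx) := \int_{B} \eta(\by)\, T_{\by}^k q(\bx)\,d\by,
\end{equation*}
where $T_{\by}^k q$ denotes the Taylor polynomial of $q$ of degree $k$ centered at $\by$. This $q_\pi$ is well defined for $q\in H^{1+s}(E)$ (with $s\geq 0$) because, after suitable density/regularization, the integrand makes sense as an element of $L^1$ with values in $\mathbb{P}_k$, and the construction preserves the usual Taylor reproduction property $q_\pi = q$ whenever $q\in\mathbb{P}_k(E)$.

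Next I would carry out the estimate by a classical scaling/Bramble–Hilbert argument. First, I would rescale $E$ to a domain $\widehat E$ of unit diameter, star-shaped with respect to a ball of radius $\sim\gamma$. On such a shape-regular reference configuration one applies the Bramble–Hilbert lemma (or more directly the averaged Taylor estimates of Brenner–Scott): for every $\widehat q \in H^{1+s}(\widehat E)$,
\begin{equation*}
\|\widehat q - \widehat q_\pi\|_{0,\widehat E} + |\widehat q - \widehat q_\pi|_{1,\widehat E} \leq \widehat C\,|\widehat q|_{1+s,\widehat E},
\end{equation*}
with $\widehat C$ depending only on $\gamma$, $s$, $k$. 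Undoing the scaling, one recovers the factors $h_E^{1+s}$ on the right-hand side and $1$, $h_E$ in front of the $L^2$ and $H^1$ seminorms respectively, producing the claimed inequality with $\|q\|_{1+s,E}$ replacing $|q|_{1+s,E}$ (the latter is already enough, but the seminorm can be bounded by the full norm). The constant depends only on $\gamma$ because that is the only geometric parameter surviving the scaling.

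The only delicate point in this plan is to justify uniformity of the constant $\widehat C$ on the reference domain $\widehat E$. Since $\widehat E$ may itself be any star-shaped polygon with respect to a ball of radius $\gamma$, one has to argue either via a compactness/Rellich argument on the family of admissible shapes (exploiting the fact that the Bramble–Hilbert bound on any fixed shape-regular domain is standard), or by invoking the explicit estimates given in Brenner–Scott's monograph where the averaged Taylor operator is bounded in terms only of the chunkiness parameter $h_E/\varrho_E\leq 1/\gamma$. Once that is in place, everything else in the proof is bookkeeping via scaling of seminorms.
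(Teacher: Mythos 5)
Your construction is the standard one and it is correct: the paper does not actually prove this lemma but quotes it as a known consequence of assumption \textbf{A1} (it is the classical polynomial-approximation estimate on star-shaped domains from the VEM literature, ultimately resting on the Brenner--Scott averaged Taylor polynomial and the Bramble--Hilbert/Dupont--Scott lemma), which is exactly the argument you outline. Your closing worry about uniformity of the constant is resolved the way you suspect: the averaged Taylor estimates are explicit in the chunkiness parameter $h_E/\varrho_E\leq 1/\gamma$, so no compactness argument over admissible shapes is needed. The only point worth tightening is that for non-integer $s$ the Bramble--Hilbert step must be invoked in its fractional-order (Dupont--Scott) form, with the degree of the averaged Taylor polynomial matched to the available regularity $1+s$ rather than always taken equal to $k$; with that adjustment the scaling bookkeeping goes through as you describe.
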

Finally, we have the following result that provides the existence of an interpolant operator on the virtual space (see \cite[Proposition 4.2]{MR3340705}).
\begin{lemma}[Existence of an interpolation operator]\label{lmm:interpolate}
\label{eq:interpolant}
Under the assumptions {\bf A1} and  {\bf A2} , let $q \in \H^{1+s}(\O)$, with $0\leq s\leq k$. Then, there exists $q_I\in \Vh$ such that
\begin{equation*}
\|q-q_I\|_{0,E}+h_E|q-q_I|_{1,E}\leq C h_E^{1+s}|q|_{1+s,E},
\end{equation*}
where $C$ is  positive and independent of $h_E$.
\end{lemma}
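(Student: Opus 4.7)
The plan is to construct $q_I$ element by element via the degrees of freedom and then verify (i) that the resulting globally defined function lies in $V_h$, and (ii) the local approximation estimate.

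First, on each polygon $E\in\CT_h$, I would define $q_I\vert_E\in V_h^E$ as the unique element of $V_h^E$ whose values under $\mathcal{D}^1$ and $\mathcal{D}^2$ coincide with those of $q$; note that $q\in H^{1+s}(\O)$ with $s>0$ has enough regularity for edge moments to be well-defined (either by trace theory or a standard averaged formulation when $s$ is small). Unisolvence of $\mathcal{D}^1\cup\mathcal{D}^2$ for $V_h^E$, established in \cite{AABMR13}, guarantees this definition makes sense. To check that $q_I\in V_h$, observe that for each interior edge $e\in\CE_h^o$ shared by $E^+$ and $E^-$, the edge moments $|e|^{-1}\int_e q_I p_{k-1}$ are, by construction, equal from both sides (both match $|e|^{-1}\int_e q\, p_{k-1}$). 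Hence $\int_e\jump{q_I}\cdot\bn_e\, p_{k-1}=0$ for all $p_{k-1}\in\P_{k-1}(e)$, so the nonconforming condition defining $H^{1,nc}(\CT_h,k)$ holds and $q_I\in V_h$.

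For the approximation estimate, I would work locally on each $E$ and argue by a Bramble--Hilbert--type/scaling approach. Let $q_\pi\in\P_k(E)$ be the polynomial approximation from Lemma \ref{lmm:polyaprox}. Since $\P_k(E)\subset V_h^E$ (which follows from the definition of $\widetilde V_h^E$ and the enhancement condition in $V_h^E$) and since the interpolation operator defined via $\mathcal{D}^1\cup\mathcal{D}^2$ is the identity on $\P_k(E)$ by unisolvence, we have $(q_\pi)_I=q_\pi$ on $E$. Writing
\begin{equation*}
q-q_I = (q-q_\pi) - (q-q_\pi)_I,
\end{equation*}
the problem reduces to bounding the interpolation operator $w\mapsto w_I$ in the $L^2(E)$- and $H^1(E)$-seminorms in terms of $\|w\|_{1+s,E}$. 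This is the standard step: rescaling to a reference element of unit size, each degree of freedom is a continuous linear functional on $H^{1+s}$ (trace moments on edges are controlled by the $H^{1+s}$-norm as soon as $s>0$, and interior moments by $L^2$); combining with an inverse/stability estimate on the finite-dimensional space $V_h^E$ yields
\begin{equation*}
\|w_I\|_{0,E}+h_E|w_I|_{1,E}\leq C\bigl(\|w\|_{0,E}+h_E|w|_{1,E}+h_E^{1+s}|w|_{1+s,E}\bigr).
\end{equation*}
Applied to $w=q-q_\pi$ together with Lemma \ref{lmm:polyaprox} and the triangle inequality, this gives the desired bound.

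The main obstacle, in my view, is the scaling argument for the interpolation operator on a general star-shaped polygon: one needs continuity of the edge-moment and interior-moment functionals in a scale-invariant form, which relies essentially on assumptions \textbf{A1} and \textbf{A2}. Assumption \textbf{A1} controls the shape of $E$ so that trace and Poincar\'e constants are uniform; assumption \textbf{A2} prevents edges from degenerating, which is what makes the per-edge moments $|e|^{-1}\int_e(\cdot)p_{k-1}$ bounded in $H^{1+s}(E)$ with constants independent of $h_E$. Once these scale-invariant continuity estimates are in hand, the rest of the argument is routine and mirrors \cite[Proposition 4.2]{MR3340705}.
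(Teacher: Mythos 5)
Your proposal is correct and follows essentially the same route as the paper, which does not prove this lemma itself but defers entirely to \cite[Proposition 4.2]{MR3340705}: define $q_I$ locally by matching the degrees of freedom $\mathcal{D}^1\cup\mathcal{D}^2$ (which also yields the weak interelement continuity defining $\H^{1,nc}(\CT_h,k)$), reduce to the polynomial approximation of Lemma~\ref{lmm:polyaprox} via reproduction of $\P_k(E)$, and conclude by scale-invariant stability of the interpolation operator under \textbf{A1}--\textbf{A2}. You also correctly identify the only genuinely technical point, namely the uniform continuity of the edge- and interior-moment functionals on $\H^{1+s}(E)$ under the mesh assumptions, which is exactly what the cited reference supplies.
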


Our next task is to check  the following properties of the non-compact operators theory \cite{DNR1}:
\begin{itemize}
\item P1: $\|T-T_h\|_h:=\displaystyle\sup_{0\neq f_h\in V_h}\frac{\|(T-T_h)f_h\|_{1,h}}{\|f\|_{1,h}}\rightarrow 0$ as $h\rightarrow 0$;
\item P2: $\forall q\in V_h$, $\lim_{h\rightarrow 0}\delta(q, V_h)=0$.
\end{itemize}

Since P2 is immediate due  the density of linear polynomials on $\L^2$ and the approximation property in Lemma \ref{lmm:interpolate}. Hence, we only prove property P1. We begin with 
the following approximation result.
\begin{lemma}
\label{lmm:aproxTTh1}
Let $f_h\in V_h$ such that $Tf_h:=\widehat{p}$ and $T_hf_h:=\widehat{p}_h$  Then, there exists a constant $C>0$ such that 
\begin{equation*}
\|(T-T_h)f_h\|_{1,h}\leq C^*h^{\min\{s,1\}}\|f_h\|_{1,h}.
\end{equation*}
\end{lemma}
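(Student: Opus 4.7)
My plan is to use a Strang-type argument together with the $V_h$-coercivity of $\widehat{a}_h$ proved earlier. Write $\widehat{p}:=Tf_h$ and $\widehat{p}_h:=T_h f_h$, and let $q_I\in V_h$ be the nonconforming interpolant of $\widehat{p}$ supplied by Lemma~\ref{lmm:interpolate}. The triangle inequality gives
$\|(T-T_h)f_h\|_{1,h}\leq\|\widehat{p}-q_I\|_{1,h}+\|\widehat{p}_h-q_I\|_{1,h}$, and the first summand is already of order $h^{\min\{s,k\}}$ directly from Lemma~\ref{lmm:interpolate}. Everything therefore reduces to controlling the discrete part $\|\widehat{p}_h-q_I\|_{1,h}$.

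For that term I would test with $v_h:=\widehat{p}_h-q_I\in V_h$ and use $V_h$-coercivity together with the equation defining $T_h$ to obtain $\widetilde{C}\|v_h\|_{1,h}^2\leq b_h(f_h,v_h)-\widehat{a}_h(q_I,v_h)$. On each polygon $E$ I would insert a local polynomial approximant $q_\pi\in\mathbb{P}_k(E)$ of $\widehat{p}$ from Lemma~\ref{lmm:polyaprox} and invoke the polynomial consistency of $a_h^E$ together with the analogous consistency of $b_h^E$ (which follows from $\Pi^E_k q_\pi=q_\pi$ and the defining property of the $L^2$-projection, so $b_h^E(q_\pi,v_h)=b^E(q_\pi,\Pi^E_k v_h)=b^E(q_\pi,v_h)$). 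This yields
$\widehat{a}_h^E(q_I,v_h)=\widehat{a}_h^E(q_I-q_\pi,v_h)+\widehat{a}^E(q_\pi-\widehat{p},v_h)+\widehat{a}^E(\widehat{p},v_h)$.
Summing over $E$ and applying elementwise integration by parts in \eqref{eq:source_pr} produces the identity $\sum_E\widehat{a}^E(\widehat{p},v_h)=b(f_h,v_h)+\mathcal{N}_h(\widehat{p},v_h)$, which folds the global nonconformity into the already-known jump term, leaving four remainders to bound.

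Three of the remainders are routine: the stability piece $\sum_E\widehat{a}_h^E(q_I-q_\pi,v_h)$ is controlled by the $\alpha^*,\beta^*$ continuity of $\widehat{a}_h^E$ and the triangle inequality $\|q_I-q_\pi\|_{1,E}\leq\|q_I-\widehat{p}\|_{1,E}+\|\widehat{p}-q_\pi\|_{1,E}$ combined with Lemmas~\ref{lmm:polyaprox}--\ref{lmm:interpolate}; the continuous piece $\sum_E\widehat{a}^E(\widehat{p}-q_\pi,v_h)$ by Cauchy--Schwarz and Lemma~\ref{lmm:polyaprox}; and the conformity piece $\mathcal{N}_h(\widehat{p},v_h)$ by Lemma~\ref{lmm:bound_Nh}. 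The main obstacle is the right-hand side mismatch $|b_h(f_h,v_h)-b(f_h,v_h)|$, because $f_h\in V_h$ is virtual and only piecewise smooth, so $b_h^E$ cannot be compared with $b^E$ by plain consistency on $f_h$. I would handle it by subtracting from $f_h|_E$ its local $\mathbb{P}_0$ (or $\mathbb{P}_k$) approximation, exploiting that $b_h^E$ and $b^E$ coincide on polynomials, and invoking a Poincar\'e--Friedrichs inequality $\|f_h-\overline{f_h}\|_{0,E}\leq Ch_E|f_h|_{1,E}$ together with the stability of $b_h^E$; this yields a bound of order $h\|f_h\|_{1,h}\|v_h\|_{1,h}$. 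Each of the four pieces is then at most $Ch^{\min\{s,1\}}(|\widehat{p}|_{1+s,\Omega}+\|f_h\|_{1,h})\|v_h\|_{1,h}$, so cancelling one factor of $\|v_h\|_{1,h}$ and absorbing $|\widehat{p}|_{1+s,\Omega}$ into $\|f_h\|_{1,h}$ via the regularity of Lemma~\ref{lmm:regularity} closes the estimate at the claimed rate.
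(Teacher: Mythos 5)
Your proposal is correct and follows essentially the same route as the paper's proof: triangle inequality through the interpolant, $V_h$-coercivity of $\widehat{a}_h$ tested with the discrete error, insertion of the local polynomial approximant to exploit consistency, the identity $\sum_E\widehat{a}^E(\widehat{p},v_h)=b(f_h,v_h)+\mathcal{N}_h(\widehat{p},v_h)$, and the same four remainder bounds. The only (welcome) difference is that you spell out the $b_h$--$b$ mismatch argument via a local polynomial subtraction and a Poincar\'e inequality, a step the paper asserts without detail.
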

\begin{proof}
Let $f_h\in V_h$ and let $\widehat{p}_I\in V_h$ be the virtual  interpolant of $\widehat{p}$. Then, from the definition of the norm we have
\begin{equation*}
\|\widehat{p}-\widehat{p}_h\|_{1,h}\leq\|\widehat{p}-\widehat{p}_I\|_{1,h}+\|\widehat{p}_I-\widehat{p}_h\|_{1,h}.
\end{equation*}
The first term on the right hand side of the estimate above is bounded as follows
\begin{multline}
\label{eq:fh1}
\|\widehat{p}-\widehat{p}_I\|_{1,h}=\sum_{E\in\CT_h}\|\widehat{p}-\widehat{p}_I\|_{1,E}\leq C\sum_{E\in\CT_h} h_E^{s}|\widehat{p}|_{1+s,E}\\
\leq Ch^s|\widehat{p}|_{1+s,\O}\leq Ch^{s}\|f_h\|_{1,h}.
\end{multline}
Now, for the remaining term, setting $v_h := \widehat{p}_I - \widehat{p}_h$ and invoking the coercivity of $\widehat{a}_h(\cdot,\cdot)$, we obtain the following identity
\begin{multline*}
\widetilde{C}\|v_h\|_{1,h}^2\leq \widehat{a}_h(\widehat{p}_I-\widehat{p}_h,v_h)
=\widehat{a}_h(\widehat{p}_I,v_h)-\widehat{a}_h(\widehat{p}_h,v_h) = \widehat{a}_h(\widehat{p}_I,v_h)-b_h(f_h,v_h)\\
=\sum_{E\in\CT_h}\{\widehat{a}_h^E(\widehat{p}_I-\widehat{p}_\pi,v_h)+\widehat{a}^E(\widehat{p}_\pi-\widehat{p},v_h) +
\widehat{a}^E(\widehat{p},v_h) \}-b_h(f_h,v_h)\\
=\sum_{E\in\CT_h}\{\widehat{a}_h^E(\widehat{p}_I-\widehat{p}_\pi,v_h)+\widehat{a}^E(\widehat{p}_\pi-\widehat{p},v_h)\}+b(f_h,v_h)-b_h(f_h,v_h)+\mathcal{N}_h(\widehat{p},v_h)\\
\leq \underbrace{\dfrac{\max\{c^{2},1\}\max\{c_1,b_1\}}{\rho}}_{\widetilde{C}}\left(\sum_{E\in\CT_h}\|\widehat{p}_I-\widehat{p}_\pi\|_{1,E}\|v_h\|_{1,E}+\sum_{E\in\CT_h}\|\widehat{p}_I-\widehat{p}\|_{1,E}\|v_h\|_{1,E}\right)\\
+h\|f_h\|_{0,\O}\|v_h\|_{1,h}+\mathcal{N}_h(\widehat{p},v_h)
\\
\leq \widetilde{C}\left(\sum_{E\in\CT_h}(2\|\widehat{p}-\widehat{p}_I\|_{1,E}+\|\widehat{p}-\widehat{p}_\pi\|_{1,E})\|v_h\|_{1,E}\right)+h\|f_h\|_{0,\O}\|v_h\|_{1,h}+\mathcal{N}_h(\widehat{p},v_h)\\
\leq 3\widetilde{C}h^s|\widehat{p}|_{1+s,\O}\|v_h\|_{1,h}+h\|f_h\|_{0,\O}\|v_h\|_{1,h}\leq \widehat{C} h^{\min\{s,1\}}\|f_h\|_{0,\O}\|v_h\|_{1,h},
\end{multline*}
where $\widehat{C}:=3\widetilde{C}$. The previous estimate implies that
\begin{equation}
\label{eq:fh2}
\|\widehat{p}_I-\widehat{p}_h\|_{1,h}\leq \widehat{C}h^{\min\{s,1\}}\|f_h\|_{1,h}.
\end{equation}
Hence, gathering \eqref{eq:fh1} and \eqref{eq:fh2} we conclude the proof, with $C^* := C\max\{\widehat{C},1\}$.
\end{proof}
 Now we are in position to establish property  P1.
 \begin{corollary}
 \label{lmm:P1}
 Property P1 holds true.
 \end{corollary}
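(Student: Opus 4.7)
The statement to prove is essentially a direct consequence of Lemma \ref{lmm:aproxTTh1}, so my plan is to package that estimate into the uniform operator bound P1 and verify that the exponent appearing there is strictly positive.

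First, I would fix an arbitrary $0 \neq f_h \in V_h$, apply Lemma \ref{lmm:aproxTTh1} to obtain
\begin{equation*}
\|(T-T_h)f_h\|_{1,h} \leq C^*\, h^{\min\{s,1\}}\, \|f_h\|_{1,h},
\end{equation*}
and divide both sides by $\|f_h\|_{1,h}$. Taking the supremum over all nonzero $f_h\in V_h$ then yields
\begin{equation*}
\|T-T_h\|_h \leq C^*\, h^{\min\{s,1\}},
\end{equation*}
where the constant $C^*$ is independent of $h$.

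Next, I would invoke Lemma \ref{lmm:regularity}, which guarantees the existence of $r_\Omega > 1/2$ and hence a regularity index $s = \min\{r,1\} \in (1/2,1]$ for solutions of the source problem \eqref{eq:source_pr}. In particular, $\min\{s,1\} = s > 1/2 > 0$, so $h^{\min\{s,1\}} \to 0$ as $h \to 0$. Combining with the displayed bound proves P1.

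I do not anticipate a genuine obstacle: the essential work has already been done in Lemma \ref{lmm:aproxTTh1}, where the non-conforming consistency error, the interpolation error, and the polynomial projection error were absorbed into the factor $h^{\min\{s,1\}}\|f_h\|_{1,h}$. The corollary simply records that, because the bound is uniform in $f_h\in V_h$ and the exponent is positive under the standing regularity assumption, convergence of the discrete solution operator to $T$ in the discrete operator norm follows immediately.
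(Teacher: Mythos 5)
Your argument is correct and is essentially the paper's own proof, which simply states that P1 follows directly from Lemma \ref{lmm:aproxTTh1}; you have merely spelled out the division by $\|f_h\|_{1,h}$, the supremum, and the positivity of the exponent $\min\{s,1\}$ guaranteed by Lemma \ref{lmm:regularity}. No further comment is needed.
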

 \begin{proof}
Follows directly from Lemma \ref{lmm:aproxTTh1}.
 \end{proof}
 \begin{remark}
 Let $\widehat{p}\in \H^{1+s}(\O)$ with $s$ as in Lemma \ref{lmm:regularity} such that  $Tf=\widehat{p}$ and $T_hf=\widehat{p}_h$, we obtain
 \begin{equation*}
\|(T-T_h)f\|_{1,h}\leq C\left(h^{\min\{s,k\}}|\widehat{p}|_{1+s,\O}+h	\sum_{E\in\mathcal{T}_h}(\|f-\Pi_k^E f\|_{0,E})\right).
\end{equation*}
Then, according to \cite{MR3507277,GMV2018}. if $f\in \H^{1+s}(\O)$ we have
\begin{equation}
\label{eq:estimate_dnr_reg}
\|(T-T_h)f\|_{1,h}\leq C\left(h^{\min\{s,k\}}\|f\|_{0,\O})\right).
\end{equation}
Let us remark that, since we have using the theory of non-compact operators, when we consider the source  $f \in \L^2(\O)$ the estimates hold for the lowest order case, i.e $k=1$. However, assuming more regularity for $f$ as in the previous estimate, we care able to consider the standard estimates, where the powers of $h$ are as in \eqref{eq:estimate_dnr_reg}.
 \end{remark}

 \subsection{Error estimates}
We begin this section recalling some definitions of spectral theory. Let $\mathcal{X}$ be a generic Hilbert space and let $S$ be a linear bounded operator defined by $S:\mathcal{X}\rightarrow\mathcal{X}$. If $I$ represents the identity operator, the spectrum of $S$ is defined by $\sp(S):=\{z\in\mathbb{C}:\,\,(z I-S)\,\,\text{is not invertible} \}$ and the resolvent is its complement $\varrho(S):=\mathbb{C}\setminus\sp(S)$. For any $z\in\varrho(S)$, we define the resolvent operator of $S$ corresponding to $z$ by $R_z(S):=(z I-S)^{-1}:\mathcal{X}\rightarrow\mathcal{X}$. 
Also, if $\mathcal{X}$ and $\mathcal{Y}$ are vectorial fields, we denote by $\mathcal{L}(\mathcal{X},\mathcal{Y})$ the space of all the linear and bounded operators acting from $\mathcal{X}$ to $\mathcal{Y}$.

Now our task is to obtain error estimates for the approximation of the eigenvalues and eigenfunctions. With this goal in mind, first  we need to recall the definition of the  gap $\widehat{\delta}$ between two closed subspaces $\CM$ and $\CN$ of  $\H^1(\Omega)$:
\begin{equation*}
\widehat{\delta}(\CM,\CN) := \max\{\delta (\CM, \CN), \delta (\CM, \CN)\},
\end{equation*}
where $\delta (\CM,\CN):= \underset{x \in \CM : \|x\|_{1, \Omega} = 1}{\sup} \left\lbrace \underset{y \in \CN}{\inf} \quad \|x-y\|_{1, \Omega} \right\rbrace$.

Let us  recall the definitions of the resolvent operators of $T$ and $T_{h}$ respectively:
\begin{gather*}
	R_z(T):=(z I-T)^{-1}\,:\, \H^{1,nc}(\CT_h,k) \to \H^{1,nc}(\CT_h,k)\,, \quad z\in\mathbb{C}\setminus \sp(T), \\
	R_z(T_{h}):=(z I-T_{h})^{-1}\,:\, V_h \to V_h\,, \quad z\in\mathbb{C}\setminus\sp(T_{h}) .
\end{gather*}

Also, for the resolvent of the discrete operator, we have the following result.
  \begin{lemma}
 \label{thm:bounded_resolvent}
 Let $F\subset\varrho(T)$ be closed. Then, there exist
 positive constants $C$ and $h_0$, independent of $h$, such that for $h<h_0$
\begin{equation*}
 \displaystyle\|(zI-T_{h})^{-1}f_h\|_{1,h}\leq C\|f_h\|_{1,h}\qquad\forall z\in F.
 \end{equation*}
 \end{lemma}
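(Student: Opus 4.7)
The plan is a Neumann-type perturbation argument: write the discrete resolvent as a perturbation of the continuous one and absorb the error using property P1. First, I would establish that
$$
M:=\sup_{z\in F}\|R_z(T)\|_{\mathcal{L}(\H^{1,nc}(\CT_h,k))}<\infty.
$$
Since $z\mapsto R_z(T)$ is analytic on $\varrho(T)$, it is continuous in operator norm and therefore bounded on the closed (compact) subset $F\subset\varrho(T)$. The relevant norm is the broken $\H^1$-norm; this makes sense because $T$ sends $\L^2(\O)$ into the conforming space $\H^1(\O)\subset \H^{1,nc}(\CT_h,k)$, on which $\|\cdot\|_{1,h}$ coincides with the usual $\H^1$-norm, so $R_z(T)$ is a genuine bounded operator on $\H^{1,nc}(\CT_h,k)$.

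Next, for $f_h\in V_h$ and $z\in F$, I would set $g_h:=R_z(T_h)f_h\in V_h$, so that $(zI-T_h)g_h=f_h$. Adding and subtracting $Tg_h$ rewrites this as $(zI-T)g_h=f_h-(T-T_h)g_h$, and since $V_h\subset \H^{1,nc}(\CT_h,k)$ and $z\in\varrho(T)$, applying $R_z(T)$ to both sides produces the perturbation identity
$$
g_h=R_z(T)f_h-R_z(T)(T-T_h)g_h.
$$
Taking $\|\cdot\|_{1,h}$-norms and using the definition of $\|T-T_h\|_h$ together with $g_h\in V_h$ yields
$$
\|g_h\|_{1,h}\le M\|f_h\|_{1,h}+M\|T-T_h\|_h\,\|g_h\|_{1,h}.
$$
I would then invoke Corollary~\ref{lmm:P1} to choose $h_0>0$ such that $M\|T-T_h\|_h\le 1/2$ whenever $h<h_0$; absorbing the second term on the left would give $\|g_h\|_{1,h}\le 2M\|f_h\|_{1,h}$, which is the claim with $C=2M$, uniform in $z\in F$ since $M$ itself is a uniform supremum.

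The main obstacle is technical rather than deep: one must set up the ambient space so that $R_z(T)$ acts as a bounded operator on the non-conforming broken Sobolev space in a way compatible with $T_h$ acting on $V_h$, so that the perturbation identity and the norm $\|T-T_h\|_h$ coming from property P1 can be combined meaningfully. Once this framework is fixed (exploiting that $T$ lands in the conforming subspace where $\|\cdot\|_{1,h}$ agrees with the usual $\H^1$-norm), the absorption step goes through with no additional effort.
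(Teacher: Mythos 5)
Your argument is correct and is essentially the one the paper relies on: the lemma is not proved inline but delegated to the perturbation argument of \cite[Lemma 4.2]{MR4077220}, namely writing $zI-T_h=(zI-T)+(T-T_h)$ and absorbing the error term using property P1 (Corollary \ref{lmm:P1}), exactly as you do. The only point you should tighten is the definition of $M$: the operator norm of $R_z(T)$ on $\H^{1,nc}(\CT_h,k)$ is a priori $h$-dependent because the space itself changes with $h$, so ``analytic, hence bounded on the compact set $F$'' only gives a bound for each fixed mesh. The paper handles this by proving directly that $\|(zI-T)v\|_{1,h}\geq C|z|\,\|v\|_{1,h}$ for all $v$ in the broken space with $C$ independent of $h$ (Lemma \ref{lmm:borekn_resolvent}), which is the uniform version of your bound; the ingredient you already identified --- that $T$ maps the broken space boundedly into the conforming space $\H^1(\O)$, on which the broken and standard norms coincide --- is precisely what makes that uniform estimate work, so your proof closes once you replace the analyticity appeal by this explicit bound. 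A second, cosmetic remark: as written you presuppose that $(zI-T_h)^{-1}$ exists; running the same inequality for an arbitrary $g_h\in V_h$ gives $\|(zI-T_h)g_h\|_{1,h}\geq C\|g_h\|_{1,h}$, which yields invertibility on the finite-dimensional space $V_h$ for free.
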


Now we prove the following result for the resolvent of $T$ on the broken norm.
\begin{lemma}
\label{lmm:borekn_resolvent}
Let $z\in F\subset\varrho(T)$. Then, there exists a constant $C>0$ independent of $h$ such that 
\begin{equation*}
\|(z I-T)v\|_{1,h}\geq C|z|\|v\|_{1,h}\quad\forall v\in \H^{1,nc}(\CT_h,k).
\end{equation*}
\end{lemma}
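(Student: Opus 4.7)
The plan is to reduce the broken-norm resolvent lower bound to the standard resolvent estimate for $T$ acting on the conforming space $\H^1(\O)$. Given $v \in \H^{1,nc}(\CT_h, k)$, I would set $w := (zI - T)v$, which lies in $\H^{1,nc}(\CT_h, k)$ since $Tv \in \H^1(\O) \subset \H^{1,nc}(\CT_h, k)$. Rearranging gives $v = z^{-1}(w + Tv)$, and the triangle inequality in the broken norm yields
\begin{equation*}
\|v\|_{1,h} \leq \frac{1}{|z|}\bigl(\|w\|_{1,h} + \|Tv\|_{1,h}\bigr) = \frac{1}{|z|}\bigl(\|w\|_{1,h} + \|Tv\|_{1,\O}\bigr),
\end{equation*}
where I used that the broken norm coincides with the conforming one on $\H^1(\O)$. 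The task therefore reduces to controlling $\|Tv\|_{1,\O}$ by $\|w\|_{1,h}$.

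The key step I would exploit is the commutativity of $T$ with $(zI - T)$: applying $T$ to the equation $(zI - T)v = w$ produces $(zI - T)(Tv) = Tw$ as an identity entirely within $\H^1(\O)$. Since $z \in F \subset \varrho(T)$, the conforming resolvent $R_z(T) : \H^1(\O) \to \H^1(\O)$ is well defined, so $Tv = R_z(T) Tw$. Using the continuity of $T : \L^2(\O) \to \H^1(\O)$ (guaranteed by Lemma \ref{lmm:regularity}) together with the obvious bound $\|w\|_{0,\O} \leq \|w\|_{1,h}$, this delivers
\begin{equation*}
\|Tv\|_{1,\O} \leq \|R_z(T)\|_{\mathcal{L}(\H^1(\O))}\,\|T\|_{\mathcal{L}(\L^2(\O),\, \H^1(\O))}\,\|w\|_{1,h}.
\end{equation*}

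Since $F$ is closed, $\sp(T)$ is discrete, and $\|R_z(T)\| \to 0$ as $|z| \to \infty$, the operator norm $\|R_z(T)\|_{\mathcal{L}(\H^1(\O))}$ admits a uniform bound $M$ on $F$, independent of $h$. Inserting this into the first display, rearranging, and setting $C := (1 + M\,\|T\|_{\mathcal{L}(\L^2(\O),\, \H^1(\O))})^{-1}$ produces the claimed inequality $\|(zI-T)v\|_{1,h} \geq C|z|\,\|v\|_{1,h}$.

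The main conceptual hurdle is that $v$ lives in the broken space, where no direct conforming resolvent theory is available. Once one recognizes that $Tv$ is already conforming and that $T$ commutes with $(zI - T)$, the problem is transferred to the well-understood conforming setting, after which only the uniform boundedness of $R_z(T)$ on $F$ (a consequence of compactness of $T$ and the spectral characterization) and the mapping property $T : \L^2(\O) \to \H^1(\O)$ are needed.
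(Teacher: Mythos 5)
Your proposal is correct and follows essentially the same route as the paper's own proof: both apply $T$ to $(zI-T)v$ to obtain the conforming identity $(zI-T)(Tv)=T(zI-T)v$, control $\|Tv\|_{1,\O}$ via the uniformly bounded conforming resolvent on $F$ and the mapping property $T:\L^2(\O)\to\H^1(\O)$, and finish with the triangle inequality applied to $v=z^{-1}\bigl((zI-T)v+Tv\bigr)$. Your writeup is, if anything, slightly more explicit than the paper's about why the resolvent norm is uniformly bounded over $F$.
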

\begin{proof}
Let us set $p^*:=T v\in \H^1(\O)$ and consider the following identity
\begin{equation*}
(z I-T)p^*=T(z I-T)v.
\end{equation*}
Let us remark that $T$ is a bounded operator and hence, $\|(z I-T)p\|_{1,\O}\geq C\|p\|_{1,\O}$ holds. Then, following the proof of \cite[Lemma 4.1]{MR4077220} we have 
\begin{equation*}
 C\|p^*\|_{1,\O} \leq \|(z I-T)p^*\|_{1,\O}\leq \|T\|_{\mathcal{L}(\L^2(\O),\H^1(\O))}\|(z I-T)v\|_{1,h},
\end{equation*}
whereas,
\begin{equation*}
C\|v\|_{1,h}\leq |z|^{-1}\|p^*\|_{1,\O}+|z|^{-1}\|(z I-T)v\|_{1,h}\leq |z|^{-1} C\|(z I-T)v\|_{1,h},
\end{equation*}
implying that $C|z|\|v\|_{1,h}\leq \|(z I-T)v\|_{1,h}$. This concludes the proof.
\end{proof}
As a consequence of the above lemma, we have that the resolvent of $T$ is bounded. This is equivalent to say that there exista a constant $C>0$ such that for any $z\in F\subset\varrho(T)$ there holds
\begin{equation}\label{eqr11}
\| (z I-T)^{-1}\|_{h}\leq C\quad\forall z\in F.
\end{equation}

Now our aim is to prove that for $h$ small enough, the discrete resolvent is also bounded. To do this task, we simply resort to \cite[Lemma 4.2]{MR4077220}.
\begin{lemma}
\label{lmm_disc_res_bound}
If $z\in F\subset\varrho (T)$, there exists $h_0>0$ such that for all $h\leq h_0$, there exists a constant $C>0$ independent of $h$ but depending on $|z|$ such that
\begin{equation*}
\|(z I-T_h)v_h\|_{1,h}\geq C\|v_h\|_{1,h}\quad\forall v_h\in V_h.
\end{equation*}
\end{lemma}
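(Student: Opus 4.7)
The plan is to exploit the identity
\begin{equation*}
(zI - T_h)v_h = (zI - T)v_h + (T - T_h)v_h \qquad \forall v_h \in V_h,
\end{equation*}
combining the lower bound for the continuous resolvent from Lemma \ref{lmm:borekn_resolvent} with the uniform operator convergence $\|T-T_h\|_h \to 0$ provided by property P1 (Corollary \ref{lmm:P1}). Since $V_h \subset \H^{1,nc}(\CT_h,k)$, the estimate of Lemma \ref{lmm:borekn_resolvent} is applicable to any $v_h \in V_h$ in the broken norm.

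First, I would take $v_h \in V_h$ and apply the triangle inequality to the identity above to obtain
\begin{equation*}
\|(zI-T)v_h\|_{1,h} \leq \|(zI-T_h)v_h\|_{1,h} + \|(T-T_h)v_h\|_{1,h}.
\end{equation*}
Next, invoking Lemma \ref{lmm:borekn_resolvent}, there exists a constant $\widetilde C>0$ independent of $h$ such that $\|(zI-T)v_h\|_{1,h} \geq \widetilde{C}|z|\,\|v_h\|_{1,h}$ for every $z\in F$. On the other hand, using the definition of the operator norm $\|\cdot\|_h$ in property P1, we have $\|(T-T_h)v_h\|_{1,h} \leq \|T-T_h\|_h\,\|v_h\|_{1,h}$. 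Combining these bounds yields
\begin{equation*}
\widetilde{C}|z|\,\|v_h\|_{1,h} \leq \|(zI-T_h)v_h\|_{1,h} + \|T-T_h\|_h\,\|v_h\|_{1,h}.
\end{equation*}

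Since $F$ is closed and bounded away from zero (it is contained in $\varrho(T)$ and we are after estimates that depend on $|z|$), by Corollary \ref{lmm:P1} we can pick $h_0>0$ small enough so that for every $h\leq h_0$ we have $\|T-T_h\|_h \leq \widetilde{C}|z|/2$. Inserting this into the previous inequality and rearranging gives
\begin{equation*}
\|(zI-T_h)v_h\|_{1,h} \geq \frac{\widetilde{C}|z|}{2}\,\|v_h\|_{1,h},
\end{equation*}
which is the desired estimate with $C := \widetilde{C}|z|/2$.

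The only delicate point is justifying that the continuous resolvent lower bound of Lemma \ref{lmm:borekn_resolvent} transfers to discrete functions $v_h \in V_h$ through the broken norm. Since $V_h$ is a subspace of $\H^{1,nc}(\CT_h,k)$, on which Lemma \ref{lmm:borekn_resolvent} is stated, no further argument is needed; the remainder is a routine perturbation argument in the spirit of \cite[Lemma 4.2]{MR4077220}, where the smallness threshold $h_0$ depends on $|z|$ through the factor $\widetilde{C}|z|/2$ that must dominate the operator error $\|T-T_h\|_h$.
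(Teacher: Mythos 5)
Your argument is correct and is precisely the standard perturbation argument that the paper itself does not spell out but delegates to its citation of \cite[Lemma 4.2]{MR4077220}: decompose $(zI-T_h)v_h=(zI-T)v_h+(T-T_h)v_h$, apply the continuous resolvent lower bound of Lemma \ref{lmm:borekn_resolvent} (legitimately, since $V_h\subset\H^{1,nc}(\CT_h,k)$), and absorb the perturbation via property P1. The only refinement worth making explicit is that $h_0$ should be chosen uniformly over $F$ by setting $d:=\inf_{z\in F}|z|>0$ (which holds because $F$ is compact and $0\in\sp(T)$ forces $0\notin F$) and requiring $\|T-T_h\|_h\leq \widetilde{C}d/2$, so that a single threshold works for all $z\in F$ as needed for the subsequent uniform bound on the discrete resolvent.
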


The previous lemma states that if we consider a compact subset $F$ of the complex plane such that
$F\cap\varrho(T)=\emptyset$ for $h$ small enough and for all $z\in F$, the operator $z I-T_h$ is invertible and there exists a constant $C>0$ independent of $f$ such that
\begin{equation*}
\| (z I-T_h)^{-1}\|_{h}\leq C\quad\forall z\in F.
\end{equation*}

We present as a consequence of the above, that the proposed method does not introduce spurious eigenvalues. In practical terms, this implies that isolated parts of $\sp(T)$ are approximated by isolated parts of $\sp(T_h)$ (see \cite{MR0203473}). This is contained in the following result.

\begin{theorem}
Let $G \subset \mathbb{C}$  be an open set containing $\sp(T)$. Then, there exists $h_0>0$ such that $\sp(T_h)\subset G$ for all $h<h_0$.
\end{theorem}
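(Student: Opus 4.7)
The plan is to argue by contradiction, using the uniform invertibility of $zI - T_h$ on compact subsets of $\varrho(T)$ that was just established in Lemma \ref{lmm_disc_res_bound}.

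First, I would assume for contradiction that the conclusion fails. Then there exists a sequence $h_n \to 0$ and $z_n \in \sp(T_{h_n})$ with $z_n \notin G$. The goal is to trap the $z_n$ inside a compact subset of $\varrho(T)$ on which $T_h$ is uniformly invertible, yielding the contradiction.

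Second, I would confine the $z_n$ to a compact set. The discrete solution operators $T_h$ are uniformly bounded (with respect to the broken $H^1$-norm), which is a straightforward consequence of Lax--Milgram applied to the well-defined discrete source problem together with the equivalence constants $\alpha_*, \beta_*, \alpha^*, \beta^*$ provided by the stability properties of $\widehat{a}_h$ and $b_h$. Hence there exists $M>0$ independent of $h$ with $\sp(T_h)\subset\{z\in\mathbb{C}:|z|\leq M\}$, so in particular $|z_n|\leq M$ for all $n$.

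Third, I would define the compact set $F := \{z\in\mathbb{C}:|z|\leq M\}\setminus G$. By construction $F$ is closed and bounded, hence compact, and since $\sp(T)\subset G$ we have $F\subset\varrho(T)$. Moreover $z_n\in F$ for every $n$. Using \eqref{eqr11} and Lemma \ref{lmm_disc_res_bound} together with a standard covering argument on the compact set $F$ (the bounds depend on $|z|$ but can be chosen uniformly on $F$ by compactness; alternatively a Neumann series perturbation from $(zI-T)^{-1}$ combined with property P1 from Corollary \ref{lmm:P1} yields uniform invertibility), there exists $h_0>0$ such that for all $h<h_0$ and all $z\in F$ the operator $zI-T_h$ is invertible, i.e.\ $F\cap\sp(T_h)=\emptyset$.

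Finally, for $n$ large enough to have $h_n<h_0$, this gives $z_n\notin\sp(T_{h_n})$, contradicting our assumption. The main obstacle will be the uniformization step: Lemma \ref{lmm_disc_res_bound} is stated pointwise in $z\in F$ with a constant depending on $|z|$, so one must argue, either via compactness of $F$ and a finite cover, or via a direct perturbation estimate $(zI-T_h)^{-1}=(zI-T)^{-1}[I-(T-T_h)(zI-T)^{-1}]^{-1}$ combined with $\|T-T_h\|_h\to 0$, that a single $h_0$ works simultaneously for all $z\in F$.
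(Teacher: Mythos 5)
Your proposal is correct and follows exactly the route the paper intends: the paper states this theorem without proof as "a consequence" of the uniform bound on the discrete resolvent over compact subsets of $\varrho(T)$ (Lemma \ref{lmm_disc_res_bound} together with the remark following it), and your argument --- uniform boundedness of $T_h$ to confine $\sp(T_h)$ to a fixed ball, then taking $F$ as the part of that ball outside $G$, which is a compact subset of $\varrho(T)$ on which $zI-T_h$ is invertible for $h<h_0$ --- is precisely the standard way to make that deduction rigorous. You also correctly flag the one genuine gap in the paper's presentation, namely that the constant in Lemma \ref{lmm_disc_res_bound} depends on $|z|$ and must be made uniform over $F$ by compactness or by the Neumann-series perturbation $(zI-T_h)^{-1}=(zI-T)^{-1}\bigl[I-(T-T_h)(zI-T)^{-1}\bigr]^{-1}$ together with property P1.
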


Now, let $\mu$ be an isolated eigenvalue if $T$ with multiplicity $m$ and let $\mathcal{E}$ be its associated eigenspace. Then, there exist $m$ eigenvalues $\mu_h^{(1)},\ldots, \mu_h^{(m)}$ of $T_h$, repeated according to their respective multiplicities that converge to $\mu$. Now, let $\mathcal{E}_h$ be the  direct sum of the associated eigenspaces of $\mu_h^{(1)},\ldots, \mu_h^{(m)}$. With these definitions at hand, now we focus on the analysis of error estimates.

Our next task is to derive error estimates for the eigenvalues and eigenfunctions. 
Let $\mu \in (0,1)$ be an isolated eigenvalue of $T$ and let $\textrm{D}$ be an open disk in the complex plane with boundary $\gamma$, such that $\mu$ is the only eigenvalue of $T$ that lying on $\textrm{D}$ and $\gamma \cap \sp(T) = \emptyset$. Let $\mathfrak{E}:\H^{1,nc}(\CT_h,k)\rightarrow \H^{1,nc}(\CT_h,k)$ be the spectral projector of $T$ corresponding to the isolated 
eigenvalue $\xi$, namely
\begin{equation*}
\displaystyle \mathfrak{E}:=\frac{1}{2\pi i}\int_{\gamma} R_{z}(T)dz.
\end{equation*}
On the other, we define $\mathfrak{E}_h:V_h\rightarrow\Vh$ as the spectral projector of $T_{h}$ corresponding to the isolated 
eigenvalue $\xi_h$, namely
\begin{equation*}
\displaystyle \mathfrak{E}_h:=\frac{1}{2\pi i}\int_{\gamma} R_{z}(T_{h})dz.
\end{equation*}

where $\mathfrak{E}$ is well defined and bounded uniformly in $h$ due to \eqref{eqr11}. On the other hand, observe that $\mathfrak{E}|_{\H^{1}(\O)}$ is a spectral projection in $\H^{1}(\O)$ onto the eigenspace $\mathfrak{E}(\H^{1,nc}(\CT_h,k))$ corresponding to the eigenvalue $\mu$ of $T$. Moreover, we have
\begin{equation*}
\mathfrak{E}(\H^{1,nc}(\CT_h,k)) = \mathfrak{E}(\H^{1}(\O)).
\end{equation*}
Also, $\mathfrak{E}_h|_{\Vh}$ is a projector in $\Vh$ onto the eigenspace $\mathfrak{E}_h(\Vh)$ corresponding to the eigenvalues o $T_h$ contained in $\gamma$. As in the continuous case, we have
\begin{equation*}
\mathfrak{E}_h(\H^{1,nc}(\CT_h,k)) = \mathfrak{E}_h(\Vh).
\end{equation*}

We can prove the following result (see \cite[Lemma 4.4]{MR4077220}.
\begin{lemma}
\label{lemma:spectral_projectors}
Let $f_h\in \Vh$. There exist constants $C>0$ and $h_{0}>0$ such that, for all $h<h_{0}$,
\begin{equation*}
	\|  (\mathfrak{E}-\mathfrak{E}_{h})f_h \|_{1,h}\leq C\|(T-T_{h})f_h\|_{1,h}\leq C\,h^{\min\{s,1\}} \| f_h \|_{1,h}.
\end{equation*}
\end{lemma}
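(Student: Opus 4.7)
The plan is to exploit the Dunford--Taylor representation of the spectral projectors as contour integrals together with the first resolvent identity. Writing
$$(\mathfrak{E}-\mathfrak{E}_h)f_h = \frac{1}{2\pi i}\int_\gamma \bigl(R_z(T)-R_z(T_h)\bigr)f_h\,dz,$$
and applying the identity $R_z(T)-R_z(T_h)=R_z(T)(T-T_h)R_z(T_h)$---which is well posed here because $R_z(T_h)f_h\in V_h\subset \H^{1,nc}(\CT_h,k)$ and both $T$ and $T_h$ map $\L^2(\O)$ into $\H^{1,nc}(\CT_h,k)$---I would obtain
$$(\mathfrak{E}-\mathfrak{E}_h)f_h = \frac{1}{2\pi i}\int_\gamma R_z(T)(T-T_h)R_z(T_h)f_h\,dz.$$

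Taking the broken $\H^1$-norm and passing it under the integral, I would then invoke the two uniform resolvent estimates already at our disposal: the bound $\|R_z(T)\|_{h}\le C$ from \eqref{eqr11}, and the bound $\|R_z(T_h)\|_{h}\le C$ from Lemma \ref{lmm_disc_res_bound}, both valid for $h<h_0$ and for $z$ on the compact contour $\gamma$. Combined with the finite length of $\gamma$, this produces an estimate of the form
$$\|(\mathfrak{E}-\mathfrak{E}_h)f_h\|_{1,h}\le C\sup_{z\in\gamma}\|(T-T_h)R_z(T_h)f_h\|_{1,h}.$$
To convert the right-hand side into $C\,\|(T-T_h)f_h\|_{1,h}$, as required by the first inequality in the statement, I would follow the argument of \cite[Lemma 4.4]{MR4077220}, rewriting the composition $(T-T_h)R_z(T_h)$ through the identity $T_hR_z(T_h)=zR_z(T_h)-I$ so that the discrete resolvent factor is reabsorbed, at the cost of a constant controlled by $\max_{z\in\gamma}|z|$ and the uniform resolvent bounds on $\gamma$.

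The second inequality is then immediate: Lemma \ref{lmm:aproxTTh1} (equivalently, Corollary \ref{lmm:P1}) gives $\|(T-T_h)f_h\|_{1,h}\le C\,h^{\min\{s,1\}}\|f_h\|_{1,h}$, closing the chain.

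The main obstacle I anticipate is producing the intermediate bound by $\|(T-T_h)f_h\|_{1,h}$ rather than by $\sup_{z\in\gamma}\|(T-T_h)R_z(T_h)f_h\|_{1,h}$: the resolvent identity naturally places $R_z(T_h)$ between $(T-T_h)$ and $f_h$, and some care is required to remove that factor without spoiling the approximation rate. A secondary bookkeeping difficulty is to ensure that all compositions are consistently defined across the three relevant spaces---$\H^1(\O)$, $V_h$, and $\H^{1,nc}(\CT_h,k)$---so that the resolvent identity and the subsequent manipulations are legitimate in the broken-norm setting of the non-conforming method.
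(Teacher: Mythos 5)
Your overall strategy---the Dunford--Taylor representation of $\mathfrak{E}-\mathfrak{E}_h$ over the contour $\gamma$, the second resolvent identity $R_z(T)-R_z(T_h)=R_z(T)(T-T_h)R_z(T_h)$, and the uniform resolvent bounds \eqref{eqr11} and Lemma \ref{lmm_disc_res_bound}---is exactly the intended argument: the paper gives no proof of its own and simply cites \cite[Lemma 4.4]{MR4077220}, which proceeds this way. The final inequality is indeed immediate from Lemma \ref{lmm:aproxTTh1}.

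The one genuine gap is the device you propose for the intermediate step. From $(zI-T_h)R_z(T_h)=I$ one gets $R_z(T_h)=z^{-1}\bigl(I+T_hR_z(T_h)\bigr)$, hence
\begin{equation*}
(T-T_h)R_z(T_h)f_h=z^{-1}(T-T_h)f_h+z^{-1}(T-T_h)T_hR_z(T_h)f_h,
\end{equation*}
and the second term still carries $R_z(T_h)$ to the right of $(T-T_h)$; iterating only reproduces the same structure, so the discrete resolvent is never actually reabsorbed. What the contour argument honestly yields is the operator-norm version
\begin{equation*}
\|(\mathfrak{E}-\mathfrak{E}_h)f_h\|_{1,h}\le \frac{|\gamma|}{2\pi}\Bigl(\sup_{z\in\gamma}\|R_z(T)\|_h\Bigr)\Bigl(\sup_{z\in\gamma}\|R_z(T_h)\|_h\Bigr)\,\|(T-T_h)|_{\Vh}\|_h\,\|f_h\|_{1,h},
\end{equation*}
where $\|(T-T_h)|_{\Vh}\|_h:=\sup_{0\ne g_h\in\Vh}\|(T-T_h)g_h\|_{1,h}/\|g_h\|_{1,h}$, using that $R_z(T_h)f_h\in\Vh$ so that Lemma \ref{lmm:aproxTTh1} applies to it. This is weaker than the stated pointwise middle bound $C\|(T-T_h)f_h\|_{1,h}$ for the given $f_h$, but since Lemma \ref{lmm:aproxTTh1} is uniform over $\Vh$, the end-to-end estimate $\|(\mathfrak{E}-\mathfrak{E}_h)f_h\|_{1,h}\le C\,h^{\min\{s,1\}}\|f_h\|_{1,h}$---the only consequence used later---still follows. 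So the correct repair is to state and prove the operator-norm intermediate inequality rather than to try to strip the factor $R_z(T_h)$ from inside the composition.
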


Now, we present the following result, that will be used to establish approximation properties between the discrete and continuous eigenfunctions. For details about these following results, we resort  to \cite{MR4077220}.

\begin{lemma}
There exists $C>0$ independent of $h$ such that 
\begin{equation*}
\widehat{\delta}(\mathfrak{E}(\H^{1}(\O)),\mathfrak{E}_h(\Vh)) \leq C(\|T-T_h\|_{h} + \delta(\mathfrak{E}(\H^{1}(\O)),\Vh)).
\end{equation*}
\end{lemma}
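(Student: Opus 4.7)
\medskip

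\noindent\textbf{Plan.} The strategy is the standard one from the non-compact operator framework of \cite{DNR1}: decompose the symmetric gap as
\[
\widehat{\delta}(\mathfrak{E}(\H^1(\O)),\mathfrak{E}_h(V_h))=\max\{\delta(\mathfrak{E}(\H^1(\O)),\mathfrak{E}_h(V_h)),\,\delta(\mathfrak{E}_h(V_h),\mathfrak{E}(\H^1(\O)))\},
\]
and bound each piece separately. A preliminary observation I would record is that, thanks to the integral representation of the spectral projectors and the uniform resolvent bound \eqref{eqr11}, the operator $\mathfrak{E}:\H^{1,nc}(\CT_h,k)\to\H^{1,nc}(\CT_h,k)$ is bounded in the broken norm uniformly in $h$; the analogous uniform bound for $\mathfrak{E}_h$ on $V_h$ follows from Lemma \ref{lmm_disc_res_bound}. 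These two facts are the workhorses in what follows.

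\medskip

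\noindent\textbf{Easy direction.} For $x_h\in\mathfrak{E}_h(V_h)$ with $\|x_h\|_{1,h}=1$, use $\mathfrak{E}x_h\in\mathfrak{E}(\H^1(\O))$ as the candidate. Since $\mathfrak{E}_hx_h=x_h$, Lemma \ref{lemma:spectral_projectors} gives directly
\[
\|x_h-\mathfrak{E}x_h\|_{1,h}=\|(\mathfrak{E}_h-\mathfrak{E})x_h\|_{1,h}\le C\|T-T_h\|_{h},
\]
which yields $\delta(\mathfrak{E}_h(V_h),\mathfrak{E}(\H^1(\O)))\le C\|T-T_h\|_h$.

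\medskip

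\noindent\textbf{Hard direction.} For $x\in\mathfrak{E}(\H^1(\O))$ with $\|x\|_{1,\O}=1$, pick any $v_h\in V_h$ and take $\mathfrak{E}_hv_h\in\mathfrak{E}_h(V_h)$ as the candidate approximation. Using $\mathfrak{E}x=x$ and inserting $\mathfrak{E}v_h$:
\[
\|x-\mathfrak{E}_hv_h\|_{1,h}=\|\mathfrak{E}x-\mathfrak{E}_hv_h\|_{1,h}\le \|\mathfrak{E}(x-v_h)\|_{1,h}+\|(\mathfrak{E}-\mathfrak{E}_h)v_h\|_{1,h}.
\]
The first term is bounded by $C\|x-v_h\|_{1,h}$ using the uniform boundedness of $\mathfrak{E}$ mentioned above; the second is bounded by $C\|T-T_h\|_{h}\|v_h\|_{1,h}$ by Lemma \ref{lemma:spectral_projectors}. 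Moreover $\|v_h\|_{1,h}\le 1+\|x-v_h\|_{1,h}$, so once $v_h$ is chosen in an (almost) minimizing way over $V_h$, $\|v_h\|_{1,h}$ is uniformly bounded for $h$ small. Taking the infimum over $v_h\in V_h$ yields
\[
\delta(\mathfrak{E}(\H^1(\O)),\mathfrak{E}_h(V_h))\le C\bigl(\|T-T_h\|_h+\delta(\mathfrak{E}(\H^1(\O)),V_h)\bigr),
\]
and combining with the easy direction finishes the proof.

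\medskip

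\noindent\textbf{Main obstacle.} The delicate point — and the reason the non-conforming setting is genuinely different from the conforming one — is the \emph{uniform} boundedness of $\mathfrak{E}$ in the broken $\H^1$-norm on $\H^{1,nc}(\CT_h,k)$. The function $x$ itself lives in $\H^1(\O)$ so $\|x\|_{1,\O}=\|x\|_{1,h}$, but the approximation $v_h$ does not, and after applying $\mathfrak{E}$ we need the same broken-norm control on $\mathfrak{E}(x-v_h)$. I would secure it by revisiting the Dunford integral $\mathfrak{E}=\tfrac{1}{2\pi i}\int_\gamma R_z(T)\,dz$ and using the bound $\|(zI-T)^{-1}\|_h\le C$ for $z\in\gamma$ from \eqref{eqr11}, which is exactly why Lemma \ref{lmm:borekn_resolvent} was established earlier. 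Once this is in place, the rest is bookkeeping of constants.
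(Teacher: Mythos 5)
Your argument is correct and is essentially the approach the paper itself relies on: the paper does not write out a proof but defers to \cite[Lemma~4.4 and its sequels]{MR4077220}, where exactly this two-sided gap decomposition is carried out using the uniform bound on $\mathfrak{E}$ coming from the resolvent estimate \eqref{eqr11} (i.e.\ Lemma~\ref{lmm:borekn_resolvent}) for the hard direction and Lemma~\ref{lemma:spectral_projectors} for both directions. You also correctly flag the one genuinely non-conforming subtlety --- uniform broken-norm boundedness of $\mathfrak{E}$ on $\H^{1,nc}(\CT_h,k)$ --- and tie it to the right ingredient, so nothing is missing.
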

Now, for a fixed eigenvalue $\mu \in (0,1)$ of $T$, we have the following result.
\begin{theorem}
For $h$ small enough, there exists $C>0$ independent of $h$, such that
\begin{equation*}
\widehat{\delta}(\mathfrak{E}(\H^{1}(\O)),\mathfrak{E}_h(\Vh)) \leq C\delta(\mathfrak{E}(\H^1(\O)),\Vh).
\end{equation*}
\end{theorem}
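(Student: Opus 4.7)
The plan is to refine the estimate from the previous lemma by showing that the operator norm $\|T-T_h\|_h$ can be absorbed into the best-approximation term $\delta(\mathfrak{E}(\H^{1}(\O)),V_h)$, up to a constant independent of $h$.

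First, I would exploit the finite dimensionality of the eigenspace $\mathfrak{E}(\H^1(\O))$ together with the elliptic regularity in Lemma \ref{lmm:regularity}, which ensures that every eigenfunction $f\in \mathfrak{E}(\H^1(\O))$ belongs to $\H^{1+r}(\O)$ with $\|f\|_{1+r,\O}\leq C\|f\|_{1,\O}$. Combining this regularity with the interpolation estimate of Lemma \ref{lmm:interpolate} would yield
\[
\delta(\mathfrak{E}(\H^1(\O)),V_h) \leq C\, h^{\min\{r,k\}},
\]
where the constant depends on the (finite-dimensional) eigenspace via the equivalence of norms.

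Next, I would apply the sharper version of the error estimate available for smoother data (see the remark after Corollary \ref{lmm:P1} and the bound \eqref{eq:estimate_dnr_reg}) to the eigenfunctions in $\mathfrak{E}(\H^1(\O))$: since these functions lie in $\H^{1+r}(\O)$ with a uniformly bounded $\H^{1+r}$-norm,
\[
\|(T-T_h)f\|_{1,h} \leq C\, h^{\min\{r,k\}}\|f\|_{0,\O}\qquad \forall f\in\mathfrak{E}(\H^1(\O)),
\]
which matches, up to constants, the decay rate of $\delta(\mathfrak{E}(\H^1(\O)),V_h)$ obtained above. Plugging both comparisons into the bound of the previous lemma would then deliver, for $h$ small enough,
\[
\widehat{\delta}(\mathfrak{E}(\H^{1}(\O)),\mathfrak{E}_h(\Vh)) \leq C\, \delta(\mathfrak{E}(\H^1(\O)),V_h),
\]
as announced.

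The main obstacle is that $\|T-T_h\|_h$ is defined as a supremum over all of $V_h$, whereas the sharper eigenspace-restricted bound only applies to components with extra smoothness. Rigorously justifying that only the eigenspace-like components effectively contribute to the final bound requires revisiting the integral representation of the spectral projectors $\mathfrak{E}$ and $\mathfrak{E}_h$ along the contour $\gamma$ together with the uniform resolvent bounds in Lemmas \ref{thm:bounded_resolvent} and \ref{lmm_disc_res_bound}, in the spirit of the non-compact operator analysis in \cite{MR4077220}.
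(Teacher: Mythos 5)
There is a genuine gap, and you have half-identified it yourself. The paper offers no proof of this theorem at all (it defers entirely to \cite{MR4077220}), so the real question is whether your sketch stands on its own — and as written it does not. Your plan establishes two separate upper bounds: $\delta(\mathfrak{E}(\H^1(\O)),\Vh)\leq Ch^{\min\{r,k\}}$ (via Lemma \ref{lmm:regularity} and Lemma \ref{lmm:interpolate}) and $\|(T-T_h)f\|_{1,h}\leq Ch^{\min\{r,k\}}\|f\|_{0,\O}$ for smooth $f$ (via \eqref{eq:estimate_dnr_reg}). But the statement to be proved is an inequality \emph{between the two $h$-dependent quantities themselves}, not the assertion that both are $O(h^{\min\{r,k\}})$. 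The step ``which matches, up to constants, the decay rate \dots plugging both comparisons into the bound of the previous lemma'' is not a valid inference: from $\widehat{\delta}\leq C(\|T-T_h\|_h+\delta(\mathfrak{E}(\H^1(\O)),\Vh))$ you can only conclude $\widehat{\delta}\leq C\delta(\mathfrak{E}(\H^1(\O)),\Vh)$ if you prove $\|T-T_h\|_h\leq C\delta(\mathfrak{E}(\H^1(\O)),\Vh)$, and two quantities sharing the same upper bound in powers of $h$ need not bound one another — $\delta(\mathfrak{E}(\H^1(\O)),\Vh)$ has no matching lower bound and could in principle decay faster (or vanish). What your argument actually delivers is $\widehat{\delta}(\mathfrak{E}(\H^{1}(\O)),\mathfrak{E}_h(\Vh))\leq Ch^{\min\{r,k\}}$, which is the content of the subsequent Theorem \ref{thm:errors1}, not of the theorem under review.

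Your closing paragraph correctly names the crux — the previous lemma involves the supremum of $\|(T-T_h)f_h\|_{1,h}$ over all of $\Vh$, while the improved rate is available only on the (smooth, finite-dimensional) eigenspace — but then defers its resolution to ``revisiting the integral representation \dots in the spirit of \cite{MR4077220}.'' That deferred step is precisely the proof; without it you have a plan with an acknowledged hole. The standard way to close it (following \cite{DNR2}) is not to absorb $\|T-T_h\|_h$ into $\delta(\mathfrak{E}(\H^1(\O)),\Vh)$, but to estimate $\delta(f,\mathfrak{E}_h(\Vh))$ for a unit $f\in\mathfrak{E}(\H^1(\O))$ directly: pass to an interpolant $f_I\in\Vh$, write $f-\mathfrak{E}_hf_I=(f-f_I)+(\mathfrak{E}-\mathfrak{E}_h)f_I+\mathfrak{E}(f_I-f)$ using $\mathfrak{E}f=f$, and control $(\mathfrak{E}-\mathfrak{E}_h)f_I$ through Lemma \ref{lemma:spectral_projectors} together with the eigenspace-restricted bound on $(T-T_h)$ — and a separate argument is still needed for the reverse distance $\delta(\mathfrak{E}_h(\Vh),\mathfrak{E}(\H^1(\O)))$. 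Note also that in the non-conforming setting the consistency term $\mathcal{N}_h$ is itself bounded only by a power of $h$ and not by the best-approximation error, so even the refined route yields the theorem only up to such consistency contributions; this is an additional point your sketch does not address.
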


The following result provides an error estimate for the eigenfunctions and eigenvalues.
\begin{theorem}
\label{thm:errors1} 
The following estimates hold
\begin{equation*}
\widehat{\delta}(\mathcal{E}, \mathcal{E}_{h}) \leq \widetilde{C}h^{\min\{r,k\}}, \quad |\mu - \mu_{h}^{(i)}| \leq \widetilde{C}h^{\min\{r,k\}}, \quad i = 1, \ldots, m,
\end{equation*}
where $r$ is given in Lemma \ref{lmm:regularity} and $\widetilde{C}>0$ is independent of $h$.
\end{theorem}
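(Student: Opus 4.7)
The plan is to split the theorem into its two estimates and prove each by drawing on the machinery already built in the paper: the previous theorem comparing gaps, Corollary~\ref{lmm:P1} together with its quantified version Lemma~\ref{lmm:aproxTTh1}, and the regularity of Lemma~\ref{lmm:regularity}.

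For the eigenfunction gap, I would start from the theorem immediately preceding, which yields $\widehat{\delta}(\mathcal{E},\mathcal{E}_h)\le C\,\delta(\mathcal{E},V_h)$, so it suffices to bound the one-sided distance. Every $p\in\mathcal{E}$ is an eigenfunction of $T$; Lemma~\ref{lmm:regularity}(2) places it in $\H^{1+r}(\O)$ with $\|p\|_{1+r,\O}\le C\|p\|_{1,\O}$. Lemma~\ref{lmm:interpolate} then produces $p_I\in V_h$ with $\|p-p_I\|_{1,E}\le C h_E^{\min\{r,k\}}|p|_{1+r,E}$ on each polygon; summing over $E\in\CT_h$ and exploiting the finite dimension $m$ of $\mathcal{E}$ gives $\delta(\mathcal{E},V_h)\le C h^{\min\{r,k\}}$, which closes the first inequality.

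For the eigenvalue estimate, I would fix an index $i$ and a discrete eigenfunction $p_h^{(i)}\in\mathcal{E}_h$ normalised so that $\|p_h^{(i)}\|_{1,h}=1$. Using $T_h p_h^{(i)}=\mu_h^{(i)}p_h^{(i)}$ I would write the key decomposition
\begin{equation*}
(\mu_h^{(i)}-\mu)\,p_h^{(i)} \;=\; (T_h-T)p_h^{(i)} \;+\; (T-\mu I)p_h^{(i)}.
\end{equation*}
By the gap bound just obtained there exists $p\in\mathcal{E}$ with $\|p_h^{(i)}-p\|_{1,h}\le C\,\widehat{\delta}(\mathcal{E},\mathcal{E}_h)\le C h^{\min\{r,k\}}$; since $Tp=\mu p$ one has $(T-\mu I)p_h^{(i)}=(T-\mu I)(p_h^{(i)}-p)$. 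Because $T:\H^{1,nc}(\CT_h,k)\to\H^1(\O)$ is uniformly bounded in the broken norm (this is essentially $T:\L^2(\O)\to\H^1(\O)$ composed with a continuous embedding), the second term is controlled by $C h^{\min\{r,k\}}$. For the first term, I would again use the same decomposition $p_h^{(i)}=p+(p_h^{(i)}-p)$: the part on $p$ receives the sharper bound from the remark following Lemma~\ref{lmm:aproxTTh1} because $p\in\H^{1+r}(\O)$ inherits the eigenfunction regularity, yielding $\|(T-T_h)p\|_{1,h}\le C h^{\min\{r,k\}}$, while the remainder is absorbed via Lemma~\ref{lmm:aproxTTh1}. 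Taking broken $\H^1$-norms and dividing by $\|p_h^{(i)}\|_{1,h}=1$ produces the stated eigenvalue bound.

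The main obstacle is the eigenvalue half of the argument: in a purely conforming self-adjoint setting one would quickly get double-order convergence, but here the non-conformity and the use of non-compact operator theory compels a single-rate estimate, and even that requires care. One must not naively bound $(T-T_h)p_h^{(i)}$ using only the generic estimate of Lemma~\ref{lmm:aproxTTh1} (which mixes an $h^{\min\{s,1\}}$ factor), but rather exploit the higher regularity transferred from the continuous eigenfunction $p$ through the gap bound, so that the final rate matches $h^{\min\{r,k\}}$. A subsidiary technical point is the uniform-in-$h$ boundedness of $T-\mu I$ in the broken norm, which follows from the continuous mapping property of $T$ together with the fact that $\|\cdot\|_{1,h}$ and $\|\cdot\|_{1,\O}$ coincide on $\H^1(\O)$.
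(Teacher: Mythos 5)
The paper states Theorem~\ref{thm:errors1} without giving a proof, deferring implicitly to the non-compact operator framework of \cite{DNR1,DNR2} and to \cite{MR4077220}; your proposal is a correct instantiation of that standard machinery and is the argument the paper tacitly relies on. The gap half is exactly right: the preceding theorem reduces everything to $\delta(\mathcal{E},V_h)$, which you bound by combining the eigenfunction regularity of Lemma~\ref{lmm:regularity}(2) with the interpolation estimate of Lemma~\ref{lmm:interpolate}. The eigenvalue half via the identity $(\mu_h^{(i)}-\mu)p_h^{(i)}=(T_h-T)p_h^{(i)}+(T-\mu I)(p_h^{(i)}-p)$ is also sound, and you correctly identify the one point requiring care: the term $(T-T_h)p$ must be estimated through the improved bound \eqref{eq:estimate_dnr_reg} for regular data rather than through the raw operator-norm statement of property P1, since $p\in\H^{1+r}(\O)$ but is not an element of $V_h$. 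One small imprecision: you say the remainder $(T-T_h)(p_h^{(i)}-p)$ is ``absorbed via Lemma~\ref{lmm:aproxTTh1}'', but that lemma is stated only for data $f_h\in V_h$, and $p_h^{(i)}-p\notin V_h$; the correct (and sufficient) justification is simply the uniform boundedness of $T$ and $T_h$ from $\L^2(\O)$ (hence from $\H^{1,nc}(\CT_h,k)$ with the broken norm) into $\H^1(\O)$ and $V_h$ respectively, which combined with $\|p_h^{(i)}-p\|_{1,h}\leq Ch^{\min\{r,k\}}$ already yields the claimed single rate. This does not affect the validity of the conclusion.
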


Let us observe that Theorem \ref{thm:errors1} is a result with a preliminary error estimate for the eigenvalues. However, it is possible to improve the order of convergence for the eigenvalues as we prove on the following result.
\begin{theorem}
The following estimate holds
	\begin{equation*}
	\label{eq:double_order}
		|\lambda-\lambda_{h,i}|\leq \mathcal{K} h^{2\min\{r,k\}},
	\end{equation*}
where $\mathcal{K}>0$ is a constant independent of $h$ and  $r$ is given in Lemma \ref{lmm:regularity}.
\end{theorem}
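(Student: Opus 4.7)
The plan is to follow the classical Babu\v{s}ka--Osborn Rayleigh-quotient strategy, adapted to NCVEM by tracking the VEM polynomial consistency gaps and the non-conformity term $\mathcal{N}_h$. I fix an eigenpair $(\lambda,p)$ normalized by $b(p,p)=1$ and a discrete eigenpair $(\lambda_{h,i},p_h)$ with $b_h(p_h,p_h)=1$, choosing $p_h\in\mathcal{E}_h$ as the representative closest to $p$ (so Theorem~\ref{thm:errors1} provides $\|p-p_h\|_{1,h}\le Ch^{\min\{r,k\}}$). Testing \eqref{eq:pression} with $p$ and \eqref{eq:spectral_disc} with $p_h$ gives $\widehat{a}(p,p)=\lambda$ and $\widehat{a}_h(p_h,p_h)=\lambda_{h,i}$, whence
\begin{equation*}
\lambda_{h,i}-\lambda = \bigl[\widehat{a}_h(p_h,p_h)-\widehat{a}(p_h,p_h)\bigr] + \bigl[\widehat{a}(p_h,p_h)-\widehat{a}(p,p)\bigr],
\end{equation*}
where the second occurrence of $\widehat{a}(p_h,p_h)$ is understood element-wise through the broken bilinear form on $H^1(\mathcal{T}_h)$.

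To control the first bracket (the VEM consistency gap) I insert an element-wise polynomial best approximant $p_\pi$ from Lemma~\ref{lmm:polyaprox} and exploit polynomial consistency of $\widehat{a}_h^E$: since $\widehat{a}_h^E(q_\pi,v_h)=\widehat{a}^E(q_\pi,v_h)$ for all $q_\pi\in\mathbb{P}_k(E)$, the bracket reduces elementwise to $\widehat{a}_h^E(p_h-p_\pi,p_h-p_\pi)-\widehat{a}^E(p_h-p_\pi,p_h-p_\pi)$, which by stability is bounded by $C\|p_h-p_\pi\|_{1,E}^2\le C(\|p-p_h\|_{1,E}^2+\|p-p_\pi\|_{1,E}^2)$; summing over $E$ and combining Theorem~\ref{thm:errors1} with Lemma~\ref{lmm:polyaprox} yields $O(h^{2\min\{r,k\}})$. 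For the second bracket I expand bilinearly as
\begin{equation*}
\widehat{a}(p_h,p_h)-\widehat{a}(p,p) = \widehat{a}(p_h-p,p_h-p) + 2\widehat{a}(p,p_h-p),
\end{equation*}
where the first term is $O(\|p_h-p\|_{1,h}^2)=O(h^{2\min\{r,k\}})$. For the mixed term I invoke the continuous eigenvalue equation extended to non-conforming test functions, which reads $\widehat{a}(p,p_h-p)=\lambda b(p,p_h-p)+\mathcal{N}_h(p,p_h-p)$, valid since $p_h-p\in H^{1,nc}(\mathcal{T}_h,k)$ (because $p\in H^1(\Omega)$ has no interior jumps). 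Lemma~\ref{lmm:bound_Nh} then gives $|\mathcal{N}_h(p,p_h-p)|\le Ch^t|p|_{1+r,\Omega}\|p-p_h\|_{1,h}=O(h^{2\min\{r,k\}})$, while the remaining term $b(p,p_h-p)$ is extracted from the identity $0=b_h(p_h,p_h)-b(p,p)=[b_h-b](p_h,p_h)+b(p_h-p,p_h-p)+2b(p,p_h-p)$: the analogous consistency trick for $b_h$ yields $[b_h-b](p_h,p_h)=O(h^{2\min\{r,k\}})$, and $b(p_h-p,p_h-p)\le C\|p_h-p\|_{0,\Omega}^2\le C\|p_h-p\|_{1,h}^2$, so $b(p,p_h-p)=O(h^{2\min\{r,k\}})$.

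The \emph{main obstacle} is the non-conformity contribution $\mathcal{N}_h(p,p_h-p)$: achieving the doubled order requires one factor of $h^{\min\{r,k\}}$ from Lemma~\ref{lmm:bound_Nh} (through the regularity of $p$) combined with a second factor from the energy-norm convergence of $p_h$ given by Theorem~\ref{thm:errors1}, and one must verify that $p_h-p$ retains the non-conforming jump orthogonality of $V_h$, which is precisely what $\mathcal{N}_h$ is designed to exploit. Once $O(h^{2\min\{r,k\}})$ control of both brackets is in place, the normalization $b_h(p_h,p_h)=1$ removes any denominator issue and collecting constants produces $|\lambda-\lambda_{h,i}|\le \mathcal{K}\,h^{2\min\{r,k\}}$, with $\mathcal{K}$ depending on $\lambda$, on $|p|_{1+r,\Omega}$, and on the stability constants of $\widehat{a}_h^E$ and $b_h^E$.
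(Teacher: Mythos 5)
Your proposal is correct and follows essentially the same route as the paper: both arguments reduce $\lambda-\lambda_{h,i}$ to the four ingredients (i) the quadratic energy-norm error $\widehat{a}(p-p_h,p-p_h)-\lambda b(p-p_h,p-p_h)$, (ii) the non-conformity term $\mathcal{N}_h(p,p-p_h)$ bounded via Lemma \ref{lmm:bound_Nh} together with Theorem \ref{thm:errors1}, and (iii)--(iv) the VEM consistency gaps $[\widehat{a}_h-\widehat{a}](p_h,p_h)$ and $[b_h-b](p_h,p_h)$ handled by inserting $p_\pi$ and using stability plus Lemma \ref{lmm:polyaprox}. The only differences are cosmetic: you normalize $b_h(p_h,p_h)=1$ up front, whereas the paper keeps $b(p_h,p_h)$ as a denominator and bounds it below by coercivity.
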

\begin{proof}
Let $(\l_{h.i},p_h)\in\mathbb{R}\times V_h$ be   a solution of \eqref{eq:spectral_disc} with $\left\|p_h\right\|_{1,h}=1$. According to Lemma \ref{lmm:aproxTTh1}, there exists a solution $(\l,p)\in\mathbb{R}\times \H^1(\O)$ of the eigenvalue problem \eqref{eq:pression} such that $\left\|p-p_h\right\|_{1,h}\leq C^*h^{\min\{r,k\}}$. 

From the symmetry of the bilinear forms and the facts that $\widehat{a}(p,v)=\l b(p,v)$ for all $v\in\H ^1(\O)$ (cf.\eqref{eq:pression}) and $\widehat{a}_h(p_h,v_h)=\l_{h,i}b(p_h,v_h)$ for all $v_h\in\Vh$ (cf.\eqref{eq:spectral_disc}), we have
\begin{multline*}
\widehat{a}(p-p_h,p-p_h)-\l b(p-p_h,p-p_h)
 =\left[\widehat{a}(p_h,p_h)-\widehat{a}_h(p_h,p_h)\right]
-\left(\l-\l_{h,i}\right)b(p_h,p_h) \\ 
+ 2\mathcal{N}_h(p,p-p_h) + \lambda_{h,i}\left[b_h(p_h,p_h) - b(p_h,p_h)\right],
\end{multline*}
where the following identity can  be obtained
\begin{multline*}
\left(\lambda-\lambda_{h}^{(i)}\right)b(p_h,p_h) = \underbrace{\l b(p-p_h,p-p_h) - \widehat{a}(p-p_h,p-p_h)}_{\textrm{(I)}} + \underbrace{2\mathcal{N}_h(p,p-p_h)}_{\textrm{(II)}} \\
+ \underbrace{\left[\widehat{a}(p_h,p_h)-\widehat{a}_h(p_h,p_h)\right]}_{\textrm{(III)}} + \underbrace{\lambda_{h,i)}\left[b_h(p_h,p_h) - b(p_h,p_h)\right]}_{\textrm{(IV)}}.
\end{multline*}
Our task is to estimate the contributions on the right hand side in the above estimate. For \textrm{(I)}, invoking Theorem \ref{thm:errors1} we have
\begin{equation}
\label{eq:boundI}
|\textrm{(I)}| \leq \dfrac{\max\{c^{2},1\}}{\rho}\|p-p_h\|_{1,h}^{2} \leq \underbrace{C\dfrac{\max\{c^{2},1\}}{\rho}}_{C_1}h^{2\min\{r,k\}}.
\end{equation}
To estimate \textrm{(II)}, invoking Lemma \ref{lmm:bound_Nh} and Theorem \ref{thm:errors1}, we obtain
\begin{equation}
\label{eq:boundII}
|\textrm{(II)}| \leq Ch^{\min\{r,k\}}|p-p_h|_{1,h} \leq \underbrace{C\widetilde{C}}_{C_2}h^{2\min\{r,k\}}.
\end{equation}
Now, for \textrm{(III)} we have by the consistency of $\widehat{a}_{h}^{E}(\cdot,\cdot)$ and Lemmas \ref{lmm:polyaprox} and \ref{thm:errors1} that 
\begin{multline}
\label{eq:boundIII}
|\textrm{(III)}| = \left\lvert\sum_{E\in\CT_h} \widehat{a}_{h}^{E}(p_h-p_\pi,p_h-p_\pi) - \widehat{a}^{E}(p_h-p_\pi,p_h-p_\pi)\right\lvert \\
\leq \underbrace{C\max\{\alpha^*,1\}\dfrac{\max\{c^{2},1\}}{\rho}}_{C_3}\|p_h-p_\pi\|_{1,h}^{2} \leq C_2(\|p-p_h\|_{1,h}+\|p-p_\pi\|_{1,h})^{2} \\
\leq \underbrace{C_{3}\max\{\widetilde{C},1\}}_{C_4}h^{2\min\{r,k\}}.
\end{multline}
Finally, for \textrm{(IV)} using approximation properties of $\Pi_k$ and it stability, together with triangle inequality we obtain
\begin{equation}
\label{eq:boundIV}
\textrm{(IV)} \leq C\|p_h-\Pi_kp_h\|_{0,\O}^{2} \leq 2C\|p-p_h\|_{0,\O}^{2}+C\|p-\Pi_k p\|_{0,\O}^{2} \leq 2C\widetilde{C}h^{2\min\{r,k\}}.
\end{equation}
Then, due to the coercivity of $\widehat{a}_{h}(\cdot,\cdot)$ (cf. \eqref{eq:ah_coercive}) and the fact that $\lambda_{h}^{(i)} \rightarrow \lambda$ as $h\rightarrow 0$ we obtain
\begin{equation}
\label{eq:bound_b}
b_{h}(p_h,p_h) = \dfrac{\widehat{a}_h(p_h,p_h)}{\lambda_{h,i}} \geq \dfrac{\widetilde{C}}{\lambda_{h}^{(i)}}\|p_h\|_{1,h}^{2} \geq C > 0.
\end{equation}
Hence, gathering \eqref{eq:boundI}, \eqref{eq:boundII}, \eqref{eq:boundIII}, \eqref{eq:boundIV}, and \eqref{eq:bound_b} we conclude the proof, where 
\begin{equation*}
\mathcal{K}:=\max\{C_{1},C_{2},C_{4},C\widetilde{C}\}.
\end{equation*}
\end{proof}
\subsection{Error estimates in $\L^2$ norm}
In the present subsection we establish error estimates for eigenfunctions in $\L^{2}$ norm. We begin this subsection with the following result, where a classical duality argument has been used.
\begin{lemma}\label{eq:cvlm2}
Let $f_{h} \in V_h$ be such that $\widehat{p} := Tf_{h}$ and $\widehat{p}_{h} := T_{h}f_{h}$. Then, the following estimate  holds
\begin{equation*}
\|\widehat{p}-\widehat{p}_{h}\|_{0,\O} \leq \mathcal{J}h^{\sigma}\|f\|_{1,h}, \quad \sigma := \min\{\widetilde{s},1\}+\min\{s,1\},
\end{equation*}
where $\mathcal{J}$ is a positive constant independent of $h$, $\widetilde{s}>0$ is the regularity associated with an auxiliary problem and  $s$ is given by Lemma \ref{lmm:regularity}.
\end{lemma}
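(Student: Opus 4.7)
The plan is to run an Aubin--Nitsche duality argument, adapted to the non-conforming setting. Setting $e_h:=\widehat{p}-\widehat{p}_h$, I would first introduce the auxiliary (dual) problem: find $\psi\in\H^1(\O)$ such that $\widehat{a}(\psi,v)=b(e_h,v)$ for all $v\in\H^1(\O)$. Since this is a shifted Neumann-type elliptic problem with piecewise constant coefficients, an elliptic regularity result analogous to Lemma~\ref{lmm:regularity} produces $\widetilde{s}>1/2$ such that $\psi\in\H^{1+\widetilde{s}}(\O)$ with $\|\psi\|_{1+\widetilde{s},\O}\le C\|e_h\|_{0,\O}$. Because $\rho$ is piecewise constant and bounded below, $\|e_h\|_{0,\O}^{2}\le C\,b(e_h,e_h)$, so it suffices to bound $b(e_h,e_h)$.

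The key step is a broken integration by parts. Since $\psi\in\H^1(\O)$ but $e_h\in\H^{1,nc}(\CT_h,k)$, Green's identity applied element-by-element, combined with the strong form of the dual problem, yields
$$b(e_h,e_h)=\sum_{E\in\CT_h}\widehat{a}^{E}(\psi,e_h)-\mathcal{N}_h(\psi,e_h).$$
I would then split $e_h=\widehat{p}-\widehat{p}_h$, insert the virtual interpolant $\psi_I\in V_h$ from Lemma~\ref{lmm:interpolate} together with a piecewise polynomial $\psi_\pi$ from Lemma~\ref{lmm:polyaprox}, and use the continuous source problem for $\widehat{p}$ tested against $\psi$ and the discrete source problem for $\widehat{p}_h$ tested against $\psi_I$, producing the decomposition
\begin{align*}
b(e_h,e_h) = {}& \bigl[b(f_h,\psi)-b_h(f_h,\psi_I)\bigr] - \sum_{E}\widehat{a}^{E}(\widehat{p}_h,\psi-\psi_I) \\
& - \sum_{E}\bigl[\widehat{a}^{E}(\widehat{p}_h,\psi_I) - \widehat{a}_h^{E}(\widehat{p}_h,\psi_I)\bigr] - \mathcal{N}_h(\psi,e_h).
\end{align*}

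Each of the four contributions is then shown to carry one factor $h^{\min\{\widetilde{s},1\}}$ (from approximation of $\psi$ by $\psi_I$ or $\psi_\pi$, or from applying Lemma~\ref{lmm:bound_Nh} to the dual solution) and one factor $h^{\min\{s,1\}}$ (from the primal $\H^1$-error bound of Lemma~\ref{lmm:aproxTTh1} or from the polynomial approximation of $\widehat{p}$). For the conformity term, Lemma~\ref{lmm:bound_Nh} supplies $Ch^{\min\{\widetilde{s},1\}}\|\psi\|_{1+\widetilde{s},\O}\,|e_h|_{1,h}$, and Lemma~\ref{lmm:aproxTTh1} bounds $|e_h|_{1,h}\le Ch^{\min\{s,1\}}\|f_h\|_{1,h}$. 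For the other three terms, I would add and subtract a polynomial approximant $\widehat{p}_\pi$ of $\widehat{p}$, and use the consistency of $\widehat{a}_h^{E}$ and $b_h^{E}$ on polynomials, their stability, the NCVEM jump-orthogonality on edges, and the $\L^2$-approximation properties of $\Pio_k$ to produce the second factor of $h^{\min\{s,1\}}\|f_h\|_{1,h}$. Combining everything, using $\|\psi\|_{1+\widetilde{s},\O}\le C\|e_h\|_{0,\O}$ and dividing by $\|e_h\|_{0,\O}$, yields
$$\|e_h\|_{0,\O}\le \mathcal{J}\,h^{\min\{\widetilde{s},1\}+\min\{s,1\}}\|f_h\|_{1,h},$$
as claimed.

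The main obstacle will be the careful bookkeeping in this broken duality identity: the non-conforming structure forces both a conformity term $\mathcal{N}_h(\psi,e_h)$ and discrete-versus-continuous gaps (between $\widehat{a}^{E}$ and $\widehat{a}_h^{E}$, and between $b$ and $b_h$) to appear simultaneously, and the sharp exponent $\sigma$ requires showing that \emph{every} summand carries both a dual-regularity factor and a primal-regularity factor. The delicate point is that the terms containing only $\psi-\psi_I$ or $\psi-\psi_\pi$ a priori deliver only one power of $h$; the second power is recovered by inserting $\widehat{p}_\pi$ and exploiting the NCVEM edge-orthogonality and the polynomial consistency of $\widehat{a}_h^{E}$, rather than by a crude triangle inequality that would squander one of the two factors.
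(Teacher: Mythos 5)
Your proposal is correct and follows essentially the same route as the paper: an Aubin--Nitsche duality argument with an auxiliary elliptic problem, broken integration by parts producing the conformity term $\mathcal{N}_h$, insertion of the virtual interpolant and the polynomial projections, and the consistency/stability of $\widehat{a}_h$ and $b_h$ so that every summand carries both the dual factor $h^{\min\{\widetilde{s},1\}}$ and the primal factor $h^{\min\{s,1\}}$. The only real difference is in the choice of dual problem --- the paper poses $-\frac{c^{2}}{\rho}\Delta q=\frac{1}{\rho}(\widehat{p}-\widehat{p}_h)$ with homogeneous Dirichlet data and splits $q=(q-q_I)+q_I$, whereas you pose it with the full shifted form $\widehat{a}$ and natural boundary conditions (arguably the more natural choice for this Neumann problem) --- but your four-term decomposition matches the paper's terms $\mathrm{T}_1$, $\mathrm{T}_2$, $\mathrm{T}_3$ together with its two leading interpolation and conformity terms essentially one for one.
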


\begin{proof}
Let us consider the following auxiliarly problem: Find $q \in \H_0^1(\O)$ such that 
\begin{equation}\label{eq:auxprob}
-\dfrac{c^{2}}{\rho}\Delta q = \dfrac{1}{\rho}(\widehat{p}-\widehat{p}_h) \quad \text{in} \; \O, \qquad q = 0 \quad \text{on} \; \partial \O.
\end{equation}
Observe that \eqref{eq:auxprob} is well defined and it solution $q \in \H_0^1(\O)$ satisfies the following estimate 
\begin{equation*}\label{eq:auxreg}
|q|_{1+\widetilde{s},\O} \leq C\|\widehat{p}-\widehat{p}_h\|_{0,\O}.
\end{equation*}
Now, testing \eqref{eq:auxprob} with $v := \widehat{p}-\widehat{p}_h \in \H^{1,nc}(\CT_h)$ we have
\begin{equation}\label{eqerr}
\begin{split}
\dfrac{1}{\rho}\|\widehat{p}-\widehat{p}_h\|_{0,\O}^{2} &= -\dfrac{c^{2}}{\rho}\int_\O \Delta q (\widehat{p}-\widehat{p}_h) \\
&= \dfrac{c^{2}}{\rho}\sum_{E\in\CT_h} \int_{E} \nabla q \cdot \nabla (\widehat{p}-\widehat{p}_h) + \dfrac{c^{2}}{\rho}\sum_{E\in\CT_h} \int_{\partial E} (\nabla q \cdot \bn)(\widehat{p}-\widehat{p}_h) \\
&= a(q,\widehat{p}-\widehat{p}_h) + \dfrac{c^{2}}{\rho}\mathcal{N}_{h}(q,\widehat{p}-\widehat{p}_h) \\
&= a(q-q_{I},\widehat{p}-\widehat{p}_h) + \dfrac{c^{2}}{\rho}\mathcal{N}_h(q,\widehat{p}-\widehat{p}_h) + a(q_{I},\widehat{p}-\widehat{p}_h).
\end{split}
\end{equation}
Observe that the first two terms in the previous identity can be estimate easily. Indeed, invoking Lemmas \ref{eq:interpolant} and \ref{lmm:aproxTTh1} we have
\begin{equation}\label{eq11}
|a(q-q_{I},\widehat{p}-\widehat{p}_h)| \leq \dfrac{c^{2}}{\rho}|q-q_I|_{1,h}|\widehat{p}-\widehat{p}_h|_{1,h} \leq \underbrace{C^*\dfrac{c^{2}}{\rho}}_{R_1}h^{\sigma}|q|_{1+\widetilde{s},\O}\|f_{h}\|_{1,h}.
\end{equation}
On the other hand, invoking Lemmas \ref{lmm:bound_Nh} and \ref{eq:interpolant} we obtain
\begin{equation}\label{eq22}
\dfrac{c^{2}}{\rho}\mathcal{N}_{h}(q,\widehat{p}-\widehat{p}_h) \leq \underbrace{C\dfrac{c^{2}}{\rho}}_{R_2}h^{\sigma}|q|_{1+\widetilde{s},\O}\|f_{h}\|_{1,h}.
\end{equation}
Finally, we have the following identity
\begin{equation*}
\begin{split}
a(q_{I},\widehat{p}-\widehat{p}_h) &= a(q_{I},\widehat{p}) - a(q_{I},\widehat{p}_h) \\
&= \mathcal{N}_{h}(\widehat{p},q_{I}) + b(f_h,q_I) - a(\widehat{p}_h,q_I) \\
&= \underbrace{\mathcal{N}_h(\widehat{p},q_I)}_{\textrm{T}_1} + \underbrace{\left[a_h(\widehat{p}_h,q_I) - a(\widehat{p},q_I-q)\right]}_{\textrm{T}_2} + \underbrace{\left[b(f_{h},q_I) - b_h(f_{h},q_I)\right]}_{\textrm{T}_3}.
\end{split}
\end{equation*}
Where now the task is to estimate each of the contributions on the right hand side of the above identity. To estimate $\textrm{T}_1$ we invoke Lemmas \ref{lmm:bound_Nh} and \ref{eq:interpolant} in order to obtain
\begin{equation*}
|\textrm{T}_1| = |\mathcal{N}_h(\widehat{p},q_I-q)| \leq Ch^{\sigma}\|f_{h}\|_{1,h}|q|_{1+\widetilde{s},\O}.
\end{equation*}
For $\textrm{T}_2$, we use the stability of $a_h(\cdot,\cdot)$ and triangle inequality, obtaining
\begin{equation*}
\begin{split}
|\textrm{T}_2| &= \left\lvert \sum_{E\in\CT_h} a_h^E(\widehat{p}_h,q_I) - a^E(\widehat{p}_h,q_I)\right\lvert \\
&= \left\lvert \sum_{E\in\CT_h} a_h^E(\widehat{p}_h-\widehat{p}_\pi,q_I-q_\pi) - a^E(\widehat{p}_h-\widehat{p}_\pi,q_I-q_\pi)\right\lvert \\
&\leq \max\{\alpha^*,1\}\dfrac{c^{2}}{\rho}|\widehat{p}_h-\widehat{p}_\pi|_{1,h}|q_I-q_\pi|_{1,h},
\end{split}
\end{equation*}
where in the last inequality we have used triangle inequality and the Cauchy-Schwarz. Hence, using again triangle inequality and invoking Lemmas \ref{lmm:aproxTTh1}, \ref{lmm:polyaprox} and \ref{eq:interpolant} we obtain
\begin{equation*}
|\textrm{T}_2| \leq C^*\max\{\alpha^*,1\}\dfrac{c^{2}}{\rho}h^{\sigma}\|f_{h}\|_{1,h}|q|_{1+\widetilde{s},\O}. 
\end{equation*}

Finally, for $\textrm{T}_3$, we use the stability of $b_h(\cdot,\cdot)$, the  Cauchy-Schwarz inequality,  and approximation properties of $\Pi_k$, obtaining
\begin{equation*}
|\textrm{T}_3| \leq C\|f_{h}-\Pi_k f_{h}\|_{0,\O}\|q_{I}-\Pi_k q_I\|_{0,\O} \leq Ch^{\sigma}\|f_{h}\|_{1,h}|q|_{1+\widetilde{s},\O}.
\end{equation*}
From this, we conclude that 
\begin{equation}\label{eq33}
|a(q_I,\widehat{p}-\widehat{p}_h)| \leq \underbrace{C\max\{1,C^*\max\{\alpha^*,1\}c^{2}\rho^{-1}\}}_{R_3}h^{\sigma}\|f_{h}\|_{1,h}|q|_{1+\widetilde{s},\O}.
\end{equation}
Therefore, we obtain
\begin{equation*}
\dfrac{1}{\rho}\|\widehat{p}-\widehat{p}_h\|_{0,\O}^{2} \leq \max\{R_1,R_2,R_3\}h^{\sigma}\|f_{h}\|_{1,h}|q|_{1+\widetilde{s},\O}.
\end{equation*}

Finally, gathering \eqref{eq11}, \eqref{eq22} and \eqref{eq33} and replacing it in \eqref{eqerr}, together with the additional regularity for $q$, allow to us to conclude that 
\begin{equation*}
\|\widehat{p}-\widehat{p}_h\|_{0,\O} \leq \mathcal{J}h^{\sigma}\|f_{h}\|_{1,h}, \quad \mathcal{J} := \rho\max\{R_1,R_2,R_3\}.
\end{equation*}
\end{proof}

\section{Numerical experiments}
\label{sec:numerics}
The following section contains a series of numerical tests where the aim is to corroborate the theoretical results obtained in the previous sections. Here, our goals are two main subjects: in one hand, to analyze the accuracy of the method on the approximation of the eigenvalues, where in different two dimensional domains and different polygonal meshes, we run a Matlab code  and compute converge order of approximation, which according to our theory must be two. This is of course performed for the lowest order NCVEM. On the other hand, we will confirm that our method is spurious free. This analysis is required since in its nature, the VEM, on its conforming and non-conforming versions, depends on some stabilizations that may  introduce spurious eigenvalues if it is not correctly chosen. We will check that always is possible to find a threshold in which the method captures safely the physical spectrum.

Depending on the chosen geometry, the eigenvalues and eigenfunctions are exactly known, so our results will be compared with the exact ones. In  the case where the geometry does not allow us to obtain exact eigenvalues, we resort to a least-square fitting to obtain order of convergence and extrapolated values that we compare with the literature. 

To complete the choice of the NCVEM, we had to fix the bilinear form $S^E(\cdot, \cdot)$ satisfying \eqref{eq:stability} to be used. To do this, we proceeded as in \cite{MR3507277,GMV2018}.
\begin{equation*}
S^E(u_h,v_h)=\sigma_E\bu_h\bv_h\quad\text{and}\quad
S_0^E(u_h,v_h)=\tau_Eh_E^2\bu_h\bv_h,
\end{equation*}
where $\bu_h,\bv_h$ denote the vectors containing the values of the local DoFs associated to $u_h,v_h\in \VK$and
the stability parameters $\sigma_E$ and $\tau_E$ are two positive constants independent of $h$.

Let us remark that for all numerical tests we will define the eigenvalue $\l_i=\omega_i^2$ where $\omega_i$ is the natural frequency, in a similar way we define by $\l_{h,i}:=\omega_{h,i}^2$ the respective approximations. 
\subsection{Test 1: Rectangular  acoustic cavity}
For this test, the computational domain is a rectangle of the form $\O:=(0,1)\times (0,1.1)$. For this domain, if we consider the physical parameters for the air, the density and the sound speed are $\rho=1 \; \textrm{kg}/\textrm{m}^3$ and $c=340 \; \textrm{m}/\textrm{s}$, the exact eigenvalues and eigenfunctions are know and are of the form 
\begin{align*}
\lambda_{nm}&:=c^2\displaystyle\pi^2\left(n ^2+\left(\frac{m}{1.1} \right)^2 \right),\quad n,m=0,1,2,\ldots, n+m\neq 0,\\
\\
\bu_{nm}(x,y)&:=\begin{pmatrix}\displaystyle n\sin\left(n\pi x\right)\cos\left(\frac{m\pi y}{1.1}\right)\\
\\
\displaystyle \frac{m}{1.1}\cos\left(n\pi x\right)\sin\left(\frac{m\pi y}{1.1}\right)\end{pmatrix}.
\end{align*}

For the discretization of this geometry, we present in Figure \ref{fig:meshesx}  some polygonal meshes considered for the numerical experiments.
\begin{figure}[h]
	\begin{center}
			\centering\includegraphics[height=4.4cm, width=4.2cm]{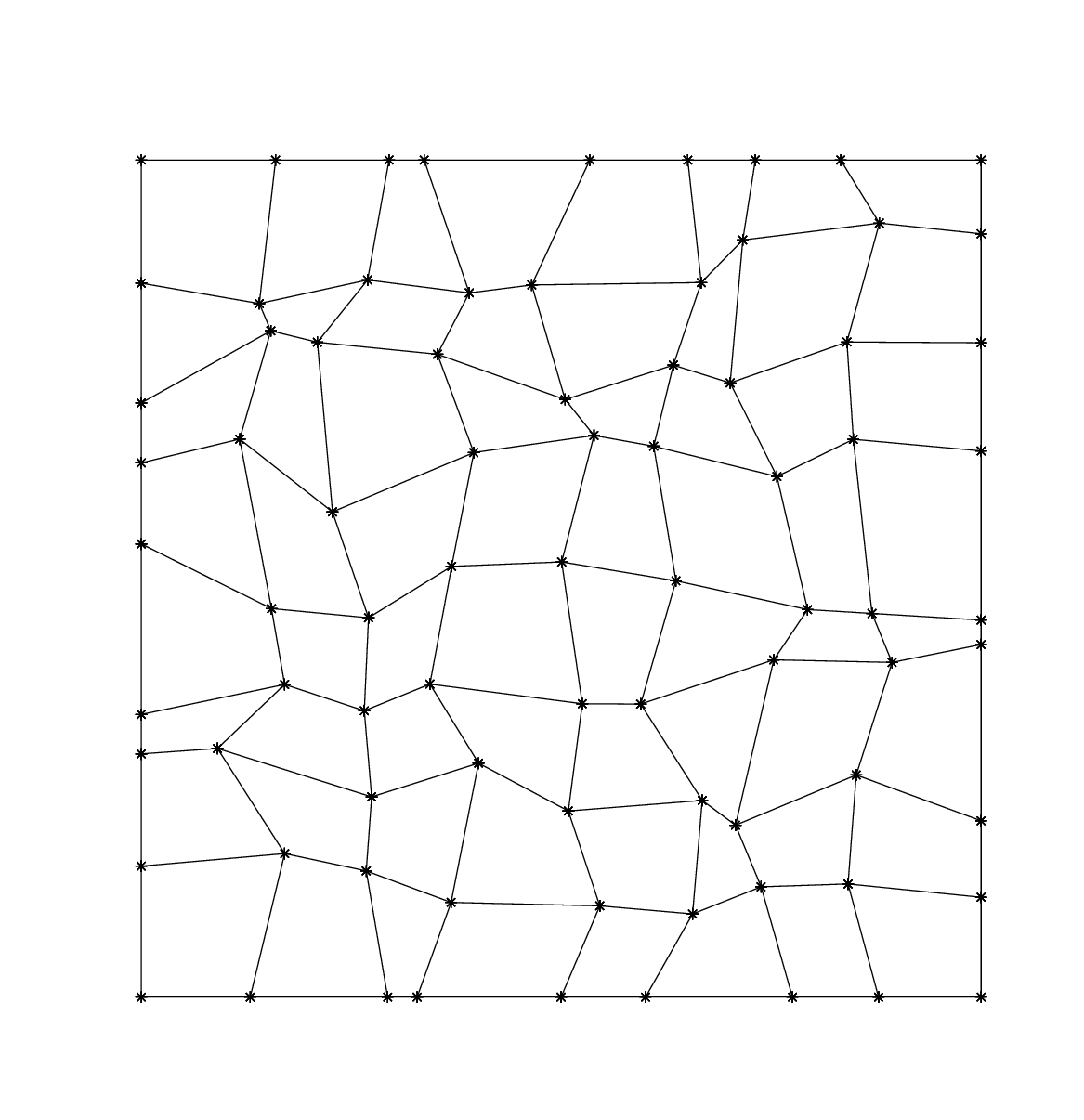}
			\centering\includegraphics[height=4.4cm, width=4.2cm]{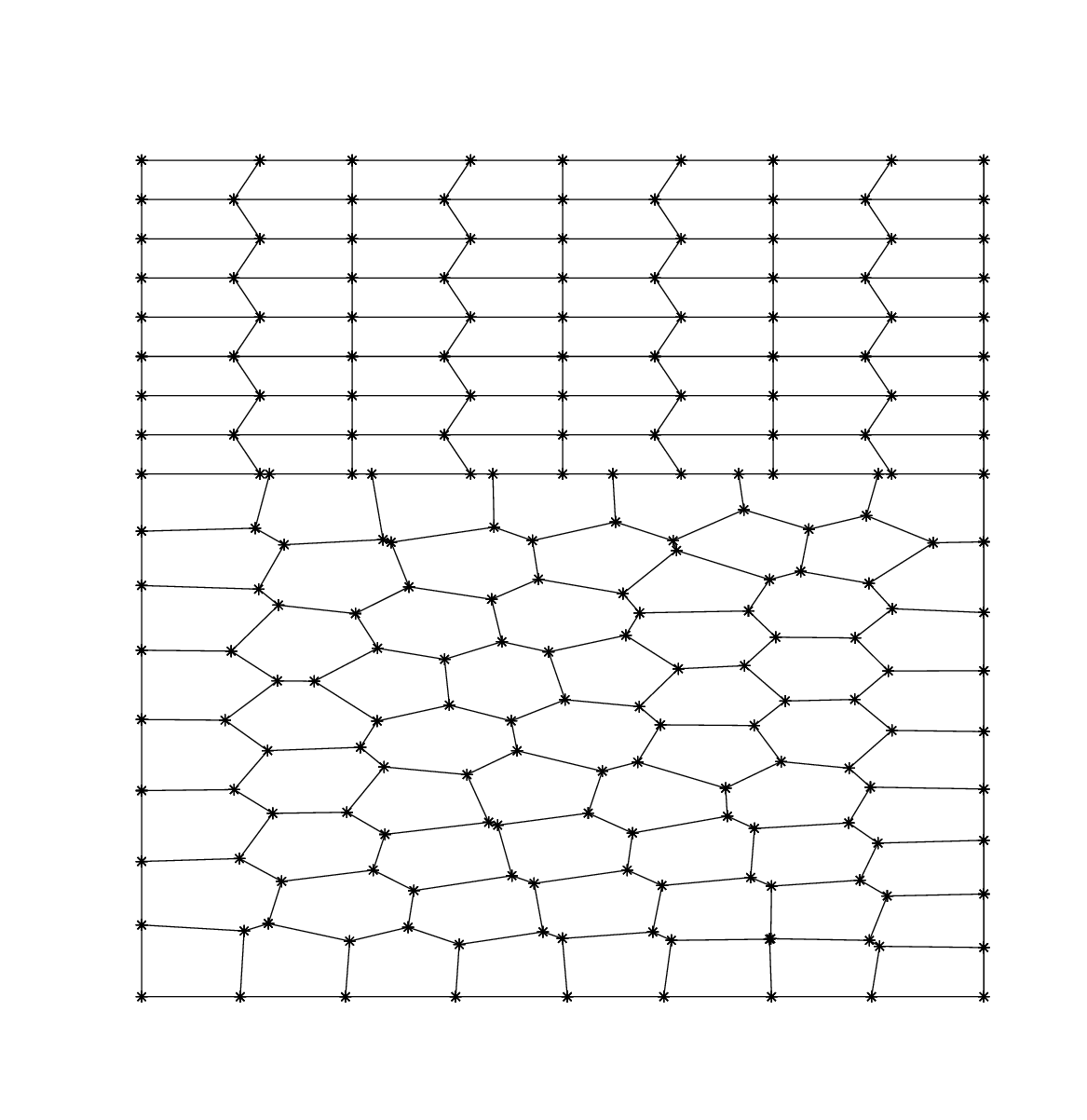}\\
			\centering\includegraphics[height=4.4cm, width=4.2cm]{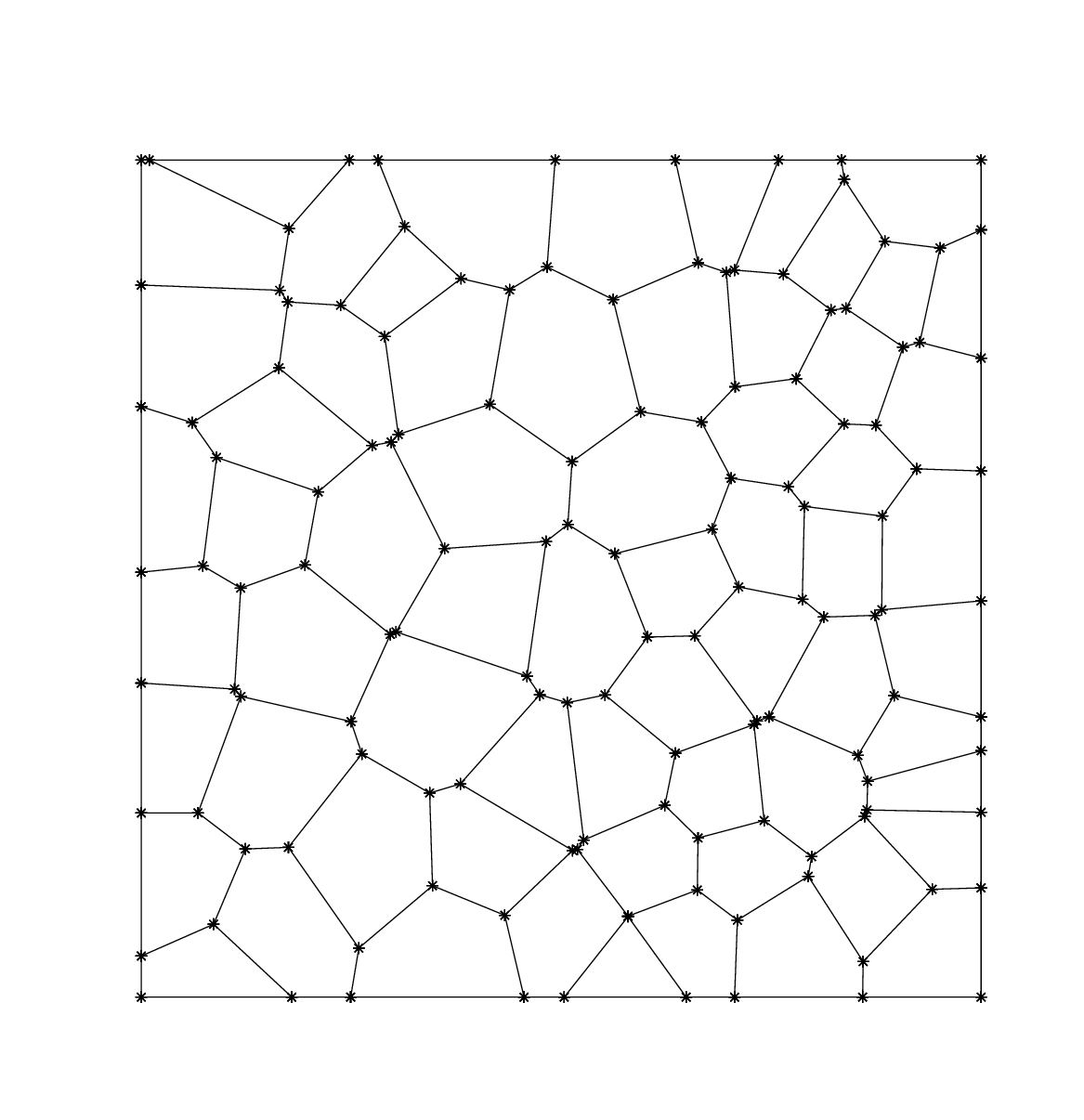}
			\centering\includegraphics[height=4.1cm, width=4.1cm]{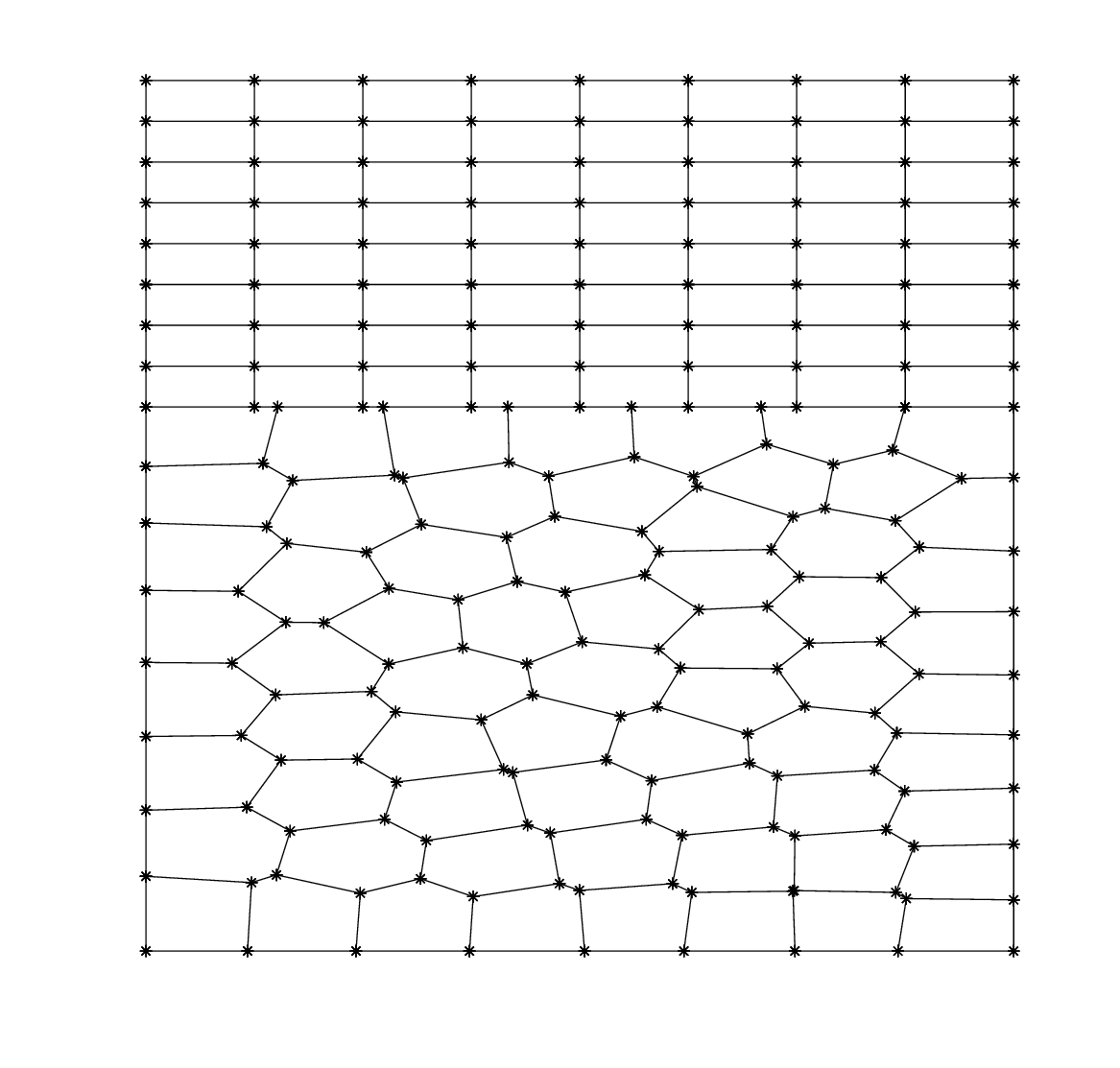}
		\caption{Sample of meshes. Left: $\mathcal{T}_{h}^{1}$ (top left),  $\mathcal{T}_{h}^{2}$ (top right),  $\mathcal{T}_{h}^{3}$ (bottom left) and  $\mathcal{T}_{h}^{4}$ (bottom right)  with $N = 8$.}
		\label{fig:meshesx}
	\end{center}
\end{figure}
Following this, in Figures \ref{fig:error1} and \ref{fig:error2}, error curves are depicted for the initial 4 lowest computational eigenvalues for each family of meshes and different refinement levels. We observe from this error curves a clear quadratic order of convergence.
\begin{figure}[h]
	\begin{center}
			\centering\includegraphics[height=6.4cm, width=6.4cm]{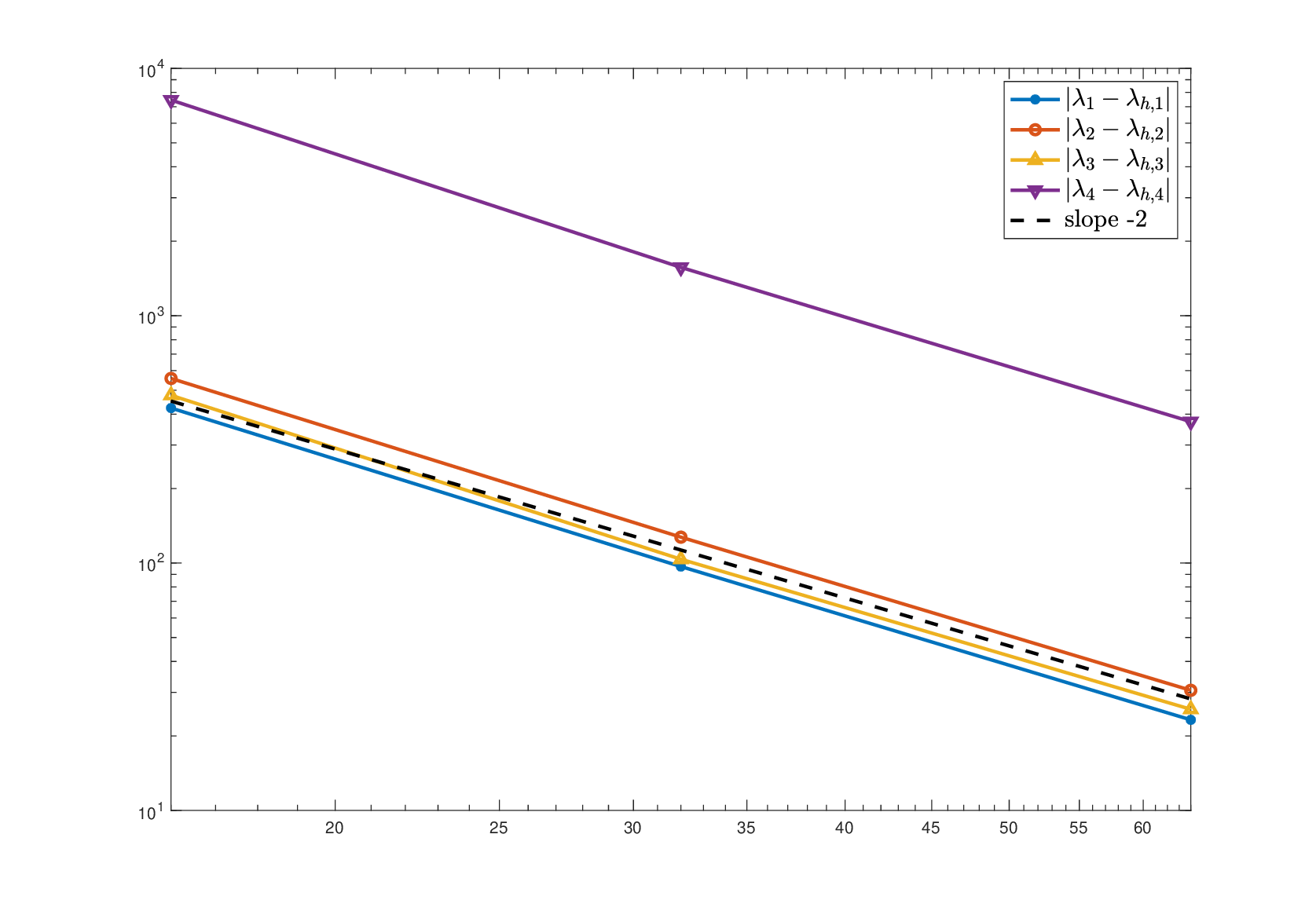}
			\centering\includegraphics[height=6.4cm, width=6.4cm]{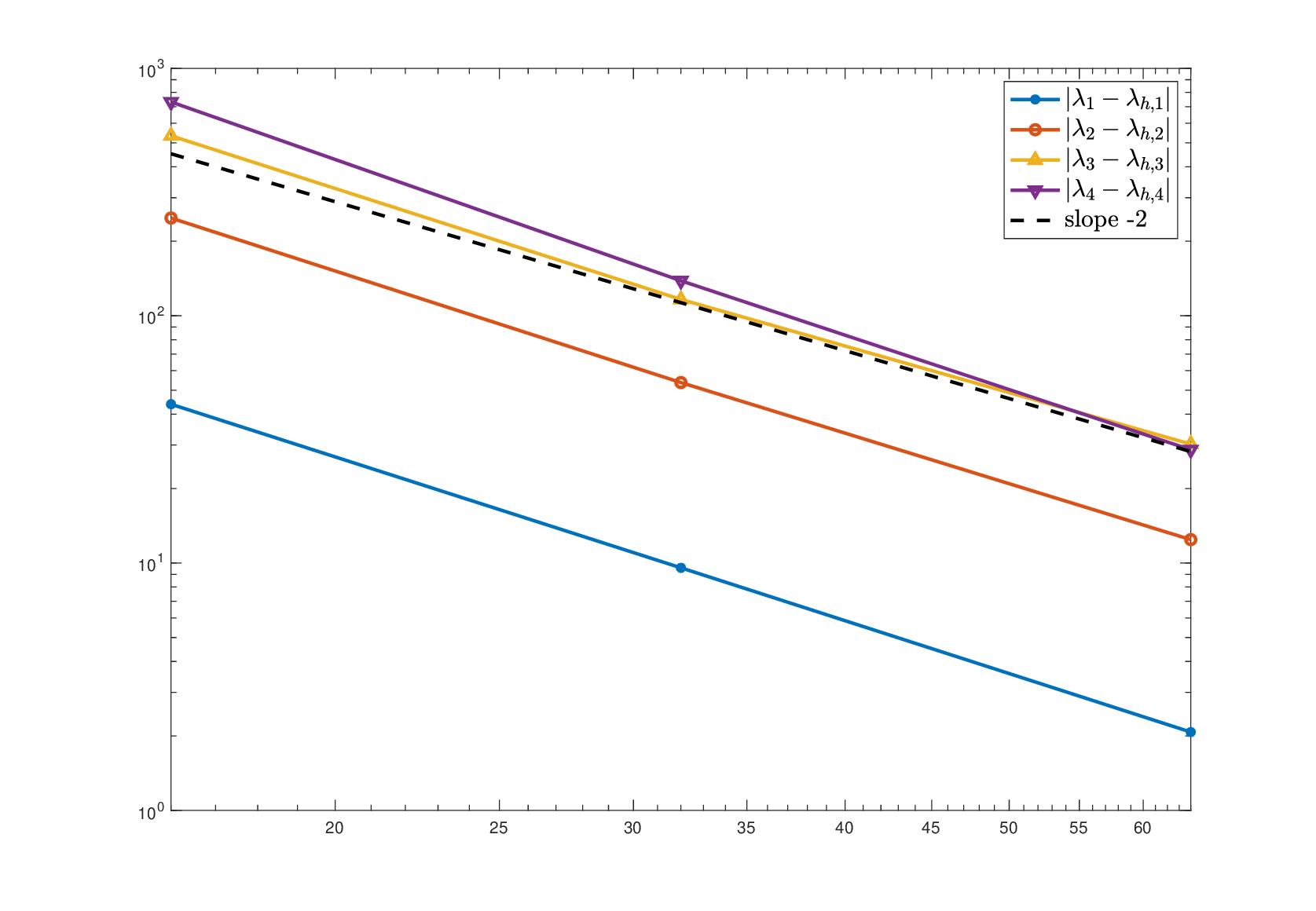}\\
		\caption{Error for  $\mathcal{T}_{h}^{1}$ (left) and  $\mathcal{T}_{h}^{2}$ (right).}
		\label{fig:error1}
	\end{center}
\end{figure}
\begin{figure}[h]
	\begin{center}
			\centering\includegraphics[height=6.4cm, width=6.4cm]{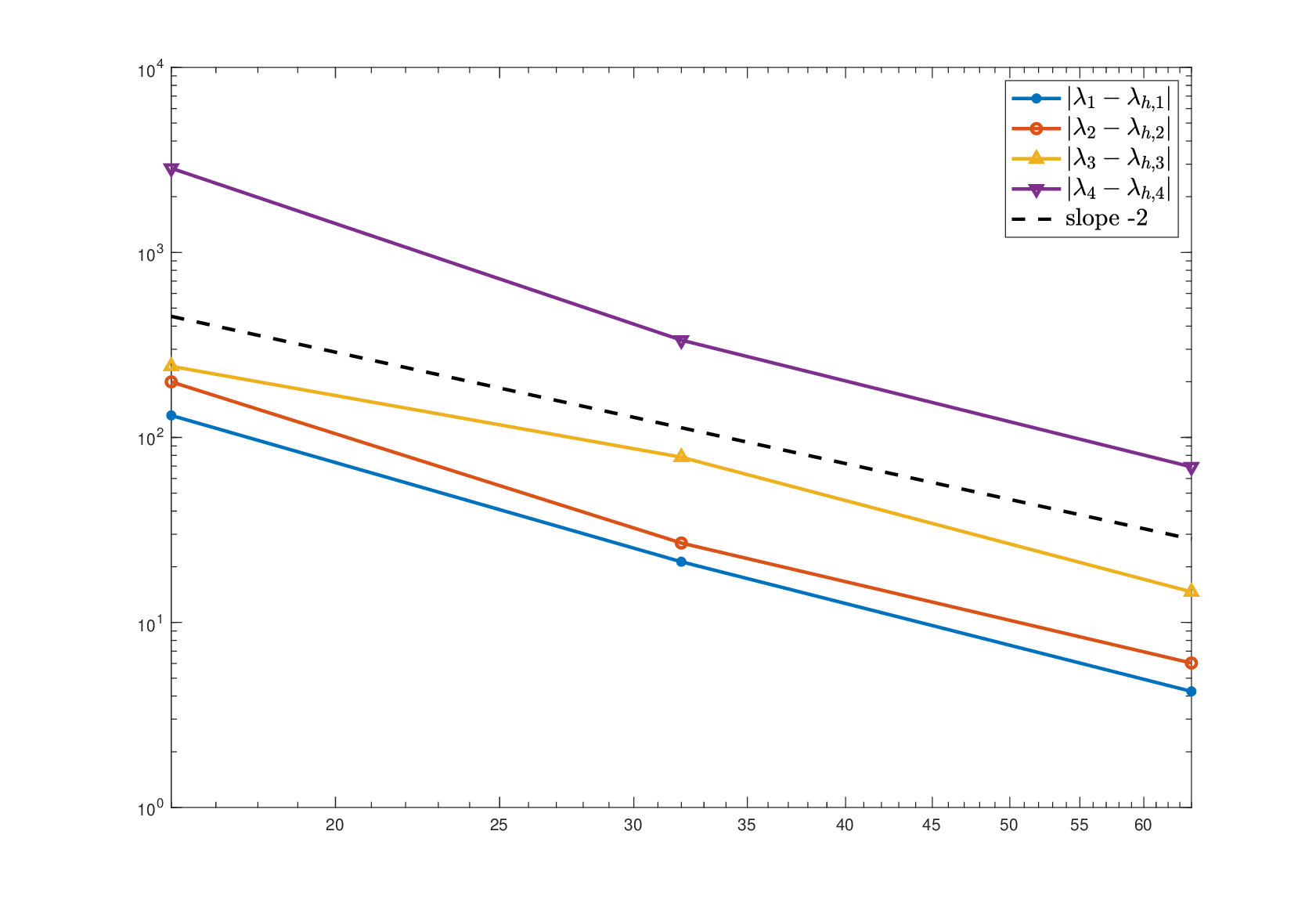}
			\centering\includegraphics[height=6.4cm, width=6.4cm]{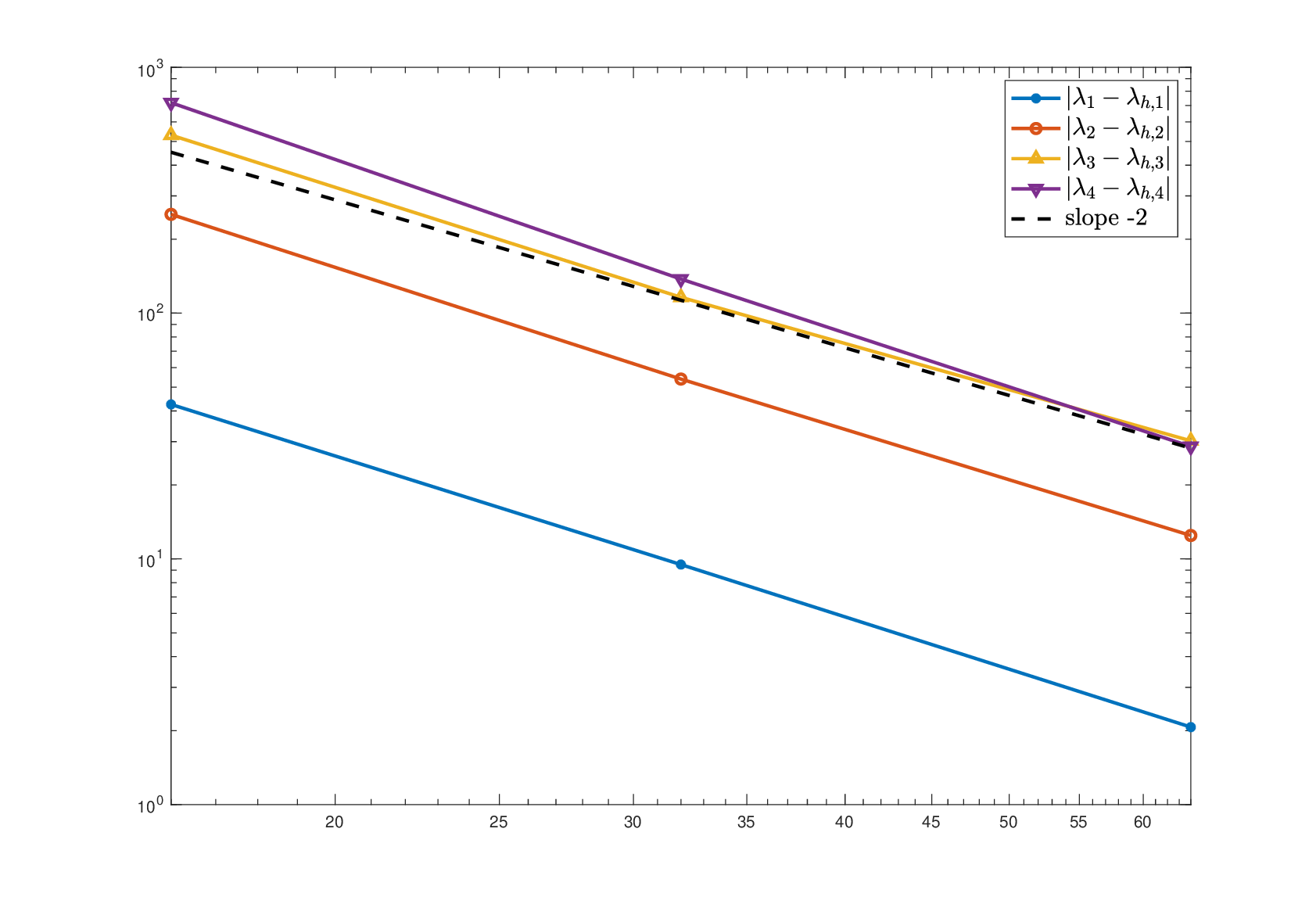}
		\caption{Error for   $\mathcal{T}_{h}^{3}$ (left) and  $\mathcal{T}_{h}^{4}$ (right).}
		\label{fig:error2}
	\end{center}
\end{figure}

Finally, in Figure \ref{fig:p&u} we present the plots of the first, third and fifth eigenfunctions associated to the pressure, and their corresponding displacement fields. Since for the lowest degree ($k=1$) only the DOF information on the sides is available, it is necessary to reconstruct the polynomial projector on each element of the mesh, in order to plot the eigenfunctions and displacements. 
\begin{figure}[h]
	\begin{center}
			\centering\includegraphics[height=4.0cm, width=4.0cm]{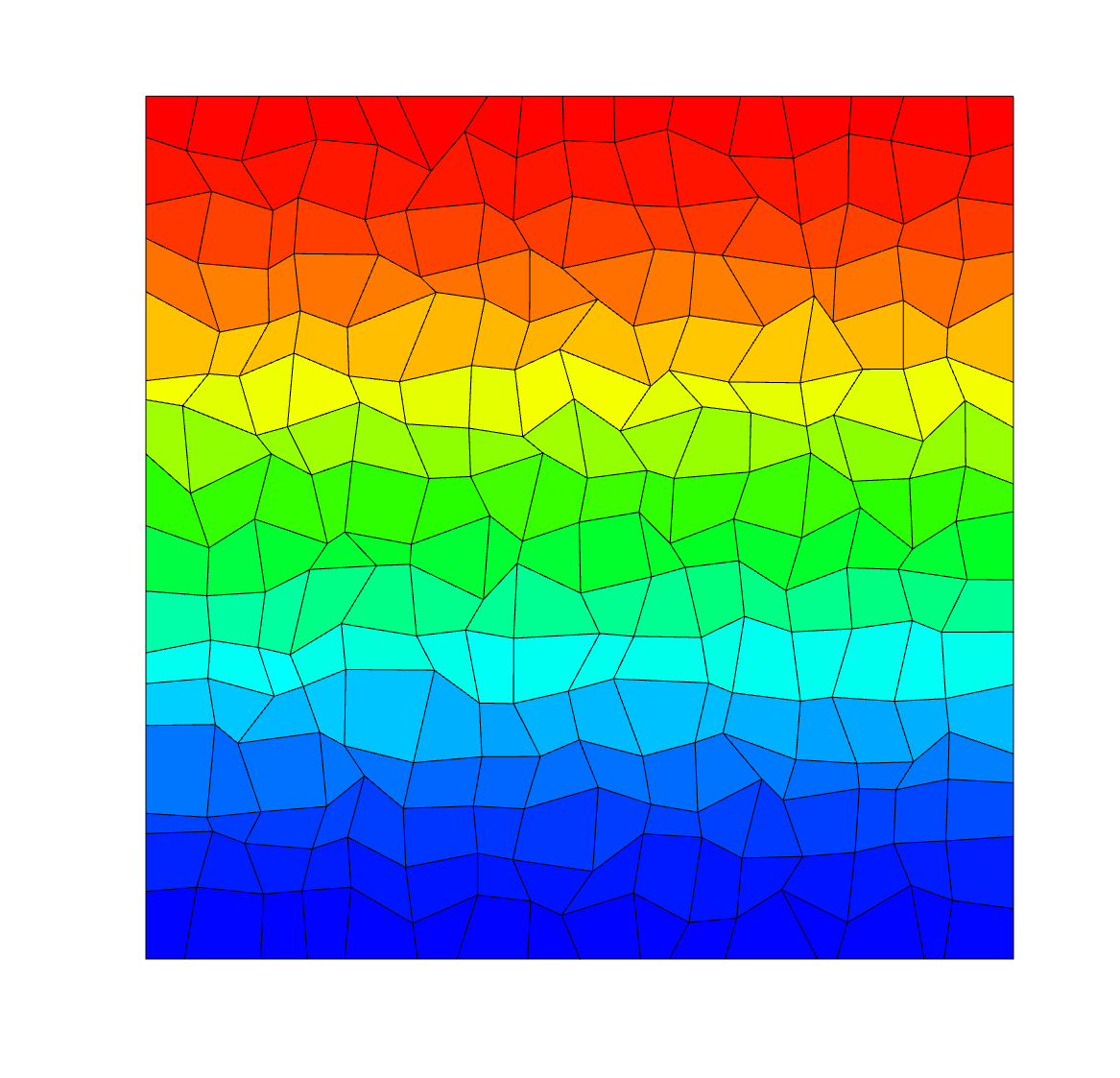}
			\centering\includegraphics[height=4.0cm, width=4.0cm]{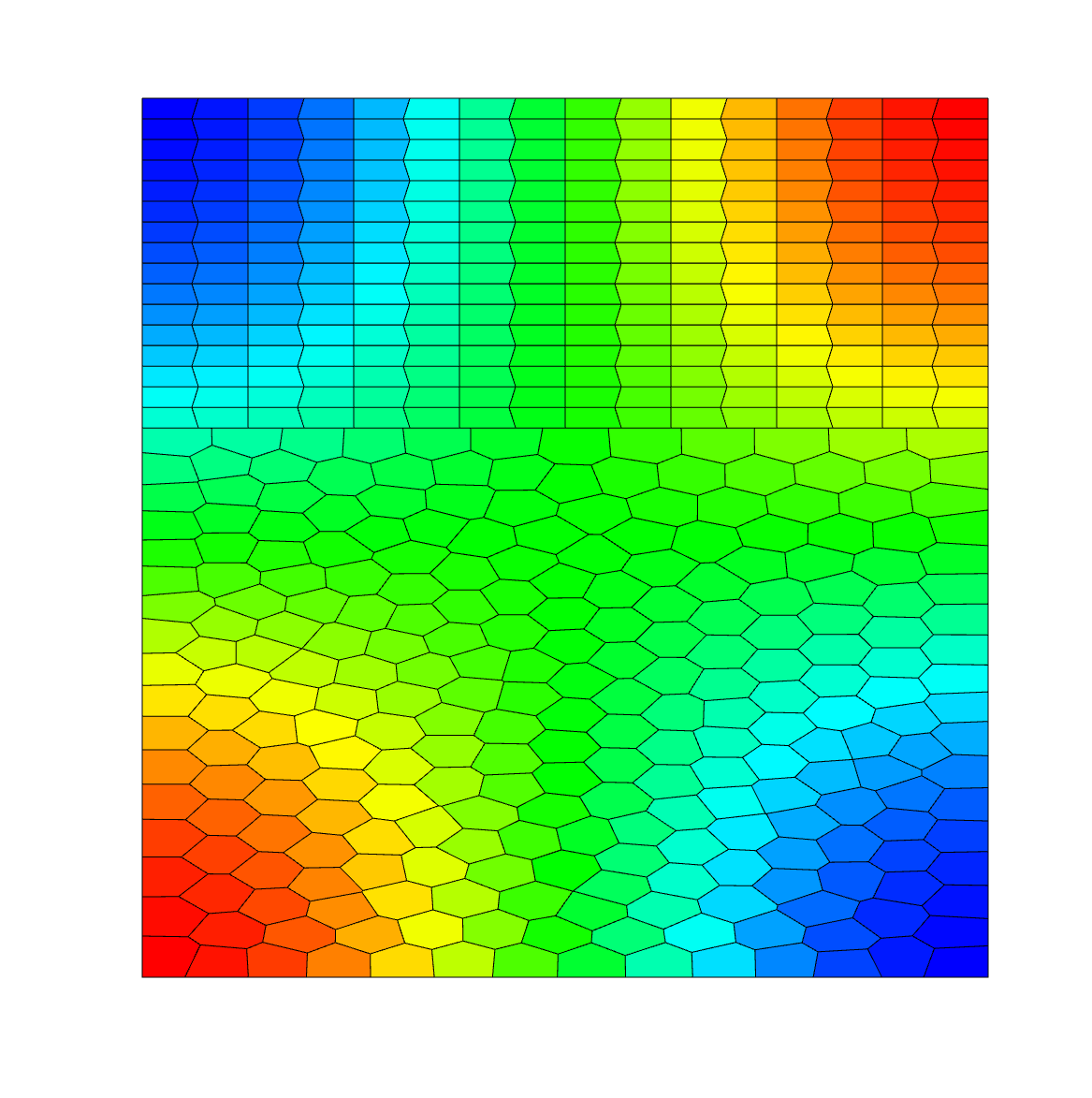}
			\centering\includegraphics[height=4.0cm, width=4.0cm]{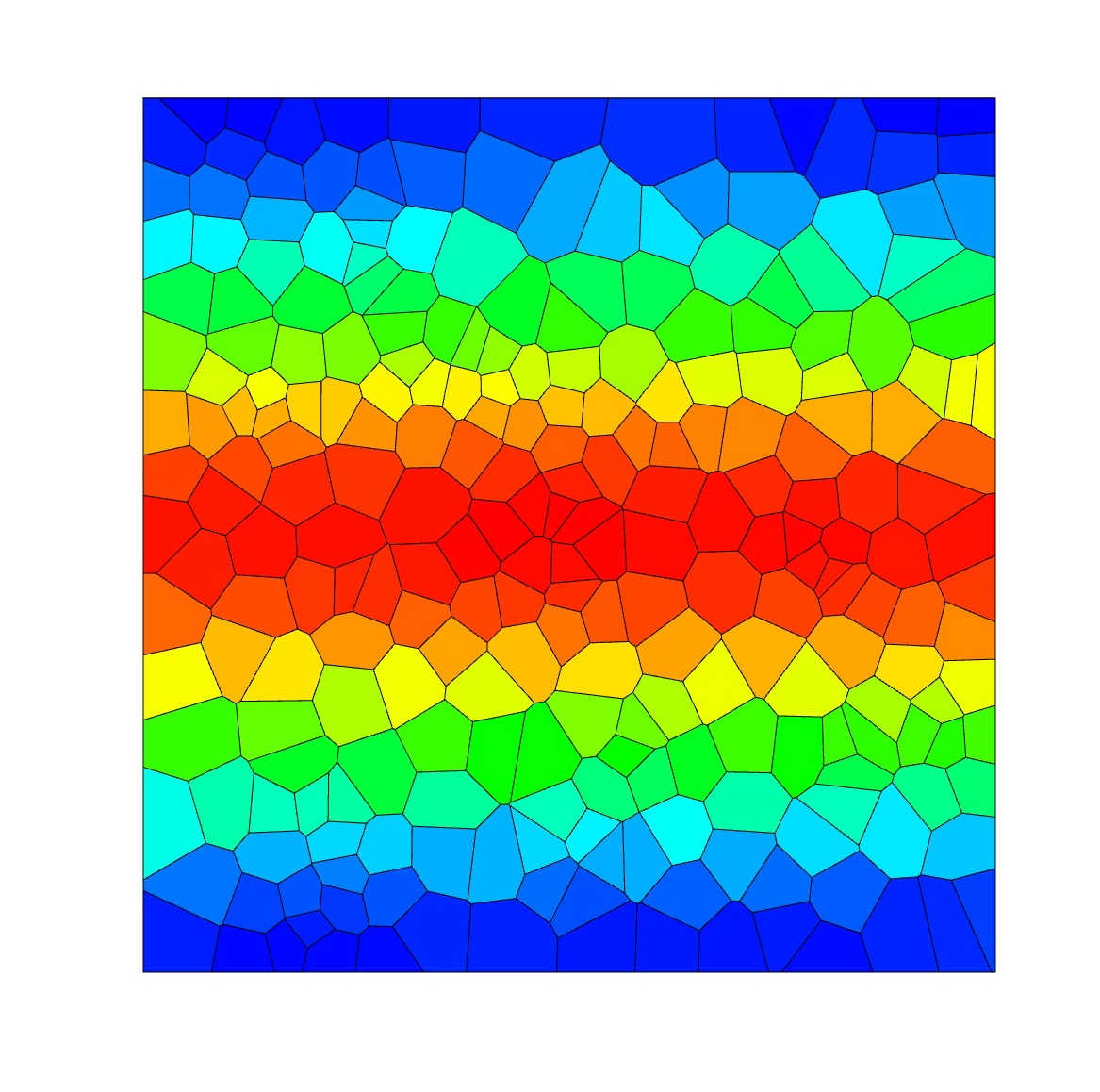}\\
			\centering\includegraphics[height=4.0cm, width=4.0cm]{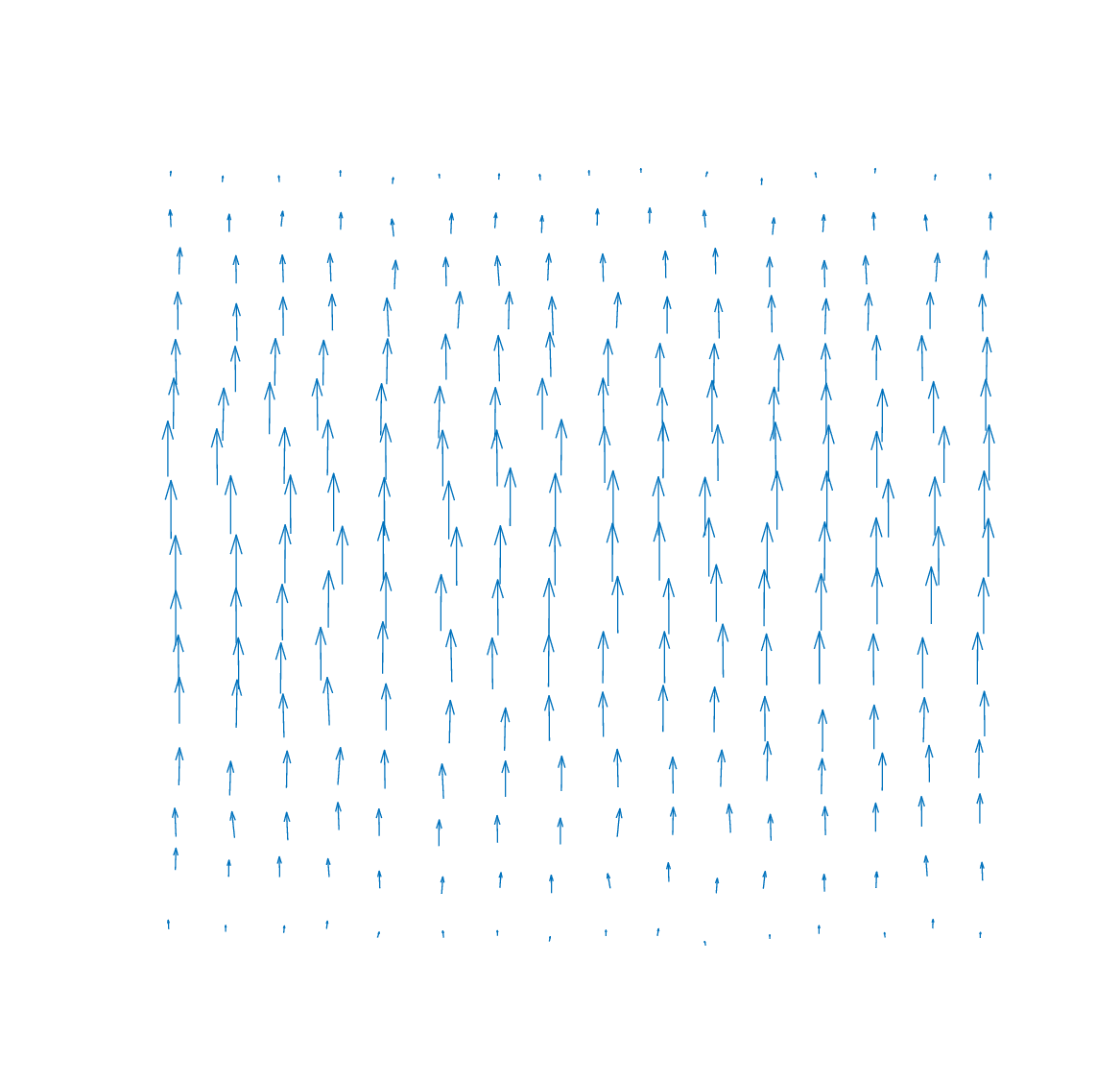}
			\centering\includegraphics[height=4.0cm, width=4.0cm]{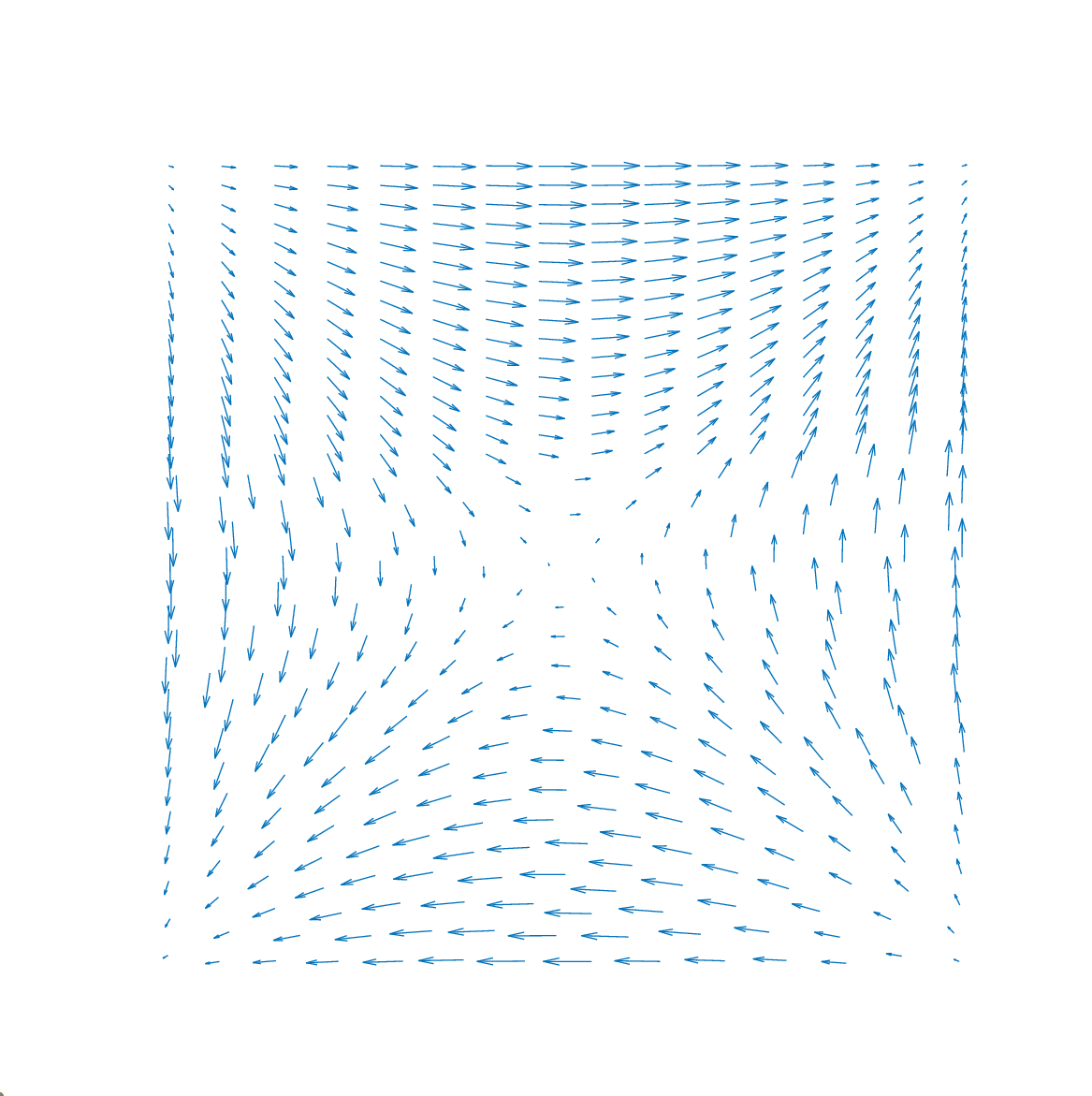}
			\centering\includegraphics[height=4.0cm, width=4.0cm]{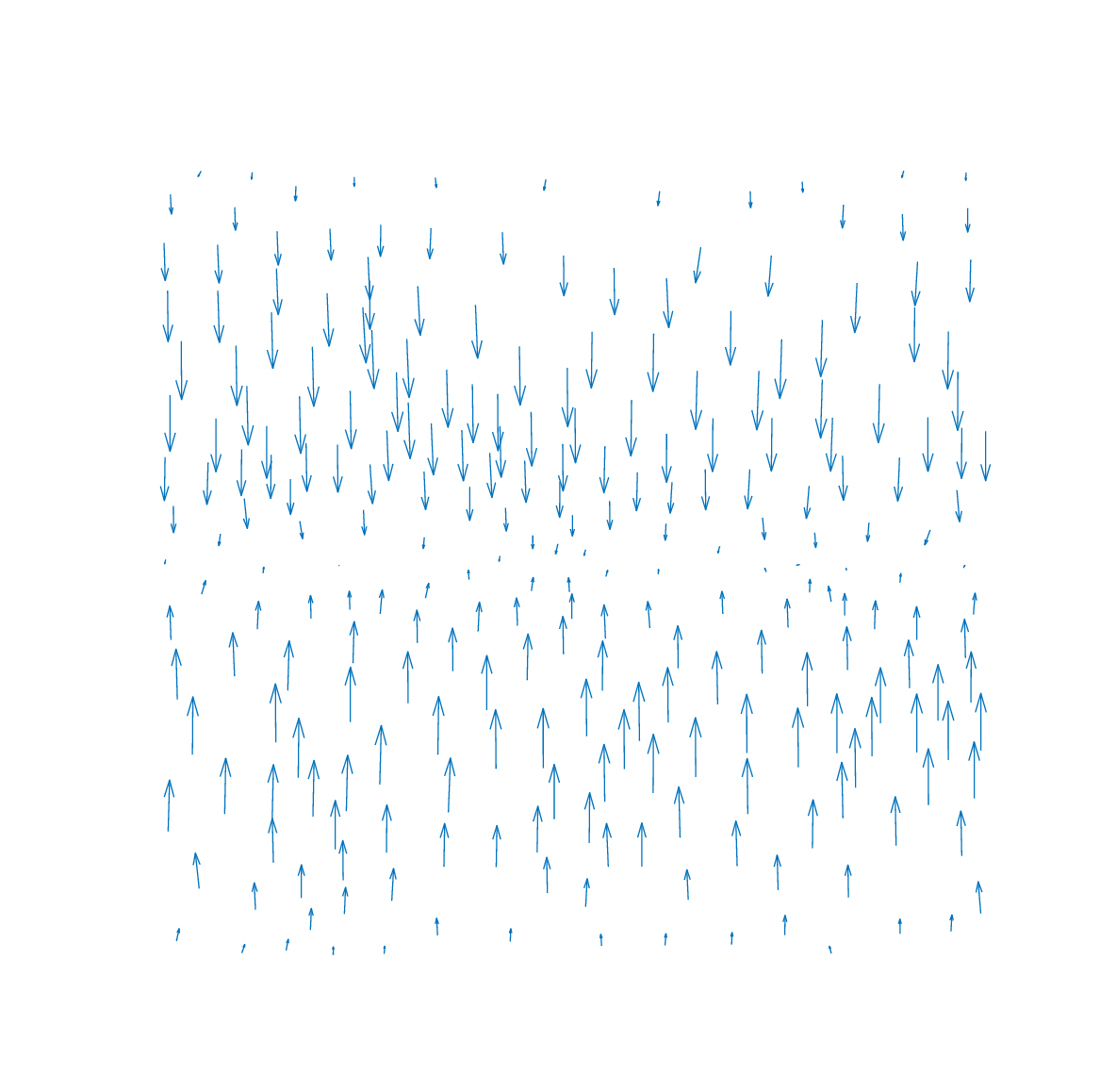}
		\caption{First, third and fifth eigenfunctions on the rectangular acoustic cavity and the corresponding displacements obtained for this test. Top row: $p_{1,h}$, $p_{3,h}$ and $p_{5,h}$; bottom row: corresponding displacement fields $u_{1,h}$, $u_{3,h}$ and $u_{5,h}$.}
		\label{fig:p&u}
	\end{center}
\end{figure}

\subsection{ Test 2: Effects on the stability constants}
The aim of this test is to analyze the influence of the stability constant $\sigma_E$ on
the computed spectrum. To make matters precise, the stabilization, if is not correctly chosen, causes that the method may introduce spurious eigenvalues, as has been observed in, for instance, \cite{MR4658607,MR3340705}, where the conforming VEM has been applied. We expect that a similar behavior is possible to be attained with the NCVEM. For this test, we have considered the rectangle $\O:=(0,1)\times (0,1.1)$ with the boundary condition $\nabla p\cdot\boldsymbol{n}=0$. We report in Tables \ref{tabla2} and \ref{tabla3} the computed eigenvalues, where the numbers inside boxes represent spurious eigenvalues, whereas the rest correspond to physical ones. Let us remark that the results of the aforementioned tables have been obtained with $\CT_h^4$. For the other meshes, the results also hold, in the sense that  spurious eigenvalues appear on the computed spectrum.
The strategy for this tests is as follows: we fix the refinement level in $N=8$ and then we start to move the parameter $\sigma_E$. This is reported in Table \ref{tabla2}.

\begin{table}[h]\label{tabla2}
\caption{ Computed lowest eigenvalues for $\mathcal{T}_{h}^{2}$, $\rho = 1 = c$  and  $4^{-2} \leq \sigma_E \leq 4^{2}$.}
\begin{center}
\begin{tabular}{|c|c|c|c|c|c|c|c|} \hline
$\sigma_E=\dfrac{1}{16}$ & $\sigma_E=\dfrac{1}{4} $ & $\sigma_E=1$ & $\sigma_E=4$ & $\sigma_E=16$ & $\l_{i}$  \\ \hline 
    \fbox{0.4284} &   0.8043   &  0.8240&    0.8276  &  0.8285 & 0.8264\\
    \fbox{0.4642} &0.8875&    0.9873 &   1.0073 &  1.0123& 1.0000\\
    \fbox{0.4774}  &  1.5343&   1.8047  & 1.8364   &1.8441& 1.8264\\
    \fbox{0.4812}   & \fbox{1.7241}   &  3.2412  &   3.3263   &  3.3420& 3.3058\\
    \fbox{0.4894}  &  \fbox{1.8606}   &  3.6909   & 4.1129   & 4.2108& 4.0000\\
    \fbox{0.4904}  &  \fbox{1.9033}   &  4.2085   &  4.3328   &  4.3557& 4.3058\\
    \fbox{0.4937}  &  \fbox{1.9270}   &  4.3992   & 4.9354   &  5.0439&  4.8264\\\hline
\end{tabular}
\end{center}
\end{table}
Table \ref{tabla2} shows the presence of spurious eigenvalues for $\sigma_E =1/16$ and $\sigma_E =1/4$ for the mesh considered with the refinement level $N=8$.  Moreover, when the values of $\sigma_E =1$, the pollution of the spectrum starts to vanish. This fact gives us the clue that for $\sigma_E \geq 1$ , the proposed method provides the physical modes for the acoustics eigenvalue problem. 
Now, we are interested to know how the spurious eigenvalues behave when the mesh is refined. To do this task, let us focus when  $ \sigma_E=1/16$ and then we begin with the refinement process. These results are reported in Table \ref{tabla3}.
    \begin{table}[h]\label{tabla3}
\caption{ Computed lowest eigenvalues for  $\mathcal{T}_{h}^{2}$, $\rho = 1 = c$ and   $\sigma_E =1/16$.}
\begin{center}
\begin{tabular}{|c|c|c|c|c|c|c|c|} \hline
      $N=8$ &   $N=16$ &  $N=32$ &  $N=64$ & $\l_{i}$\\\hline
    \fbox{0.4284} &   0.8079 &  0.8230 &0.8257& 0.8264\\
    \fbox{0.4642}  &0.8940  &  0.9798& 0.9954& 1.0000\\
    \fbox{0.4774}  & 1.5221  &  1.7880&1.8178&  1.8264 \\
    \fbox{0.4812}& \fbox{1.7760} &   3.2325 & 3.2930& 3.3058\\
    \fbox{0.4894} & \fbox{1.7895} &   3.5937&3.9213&  4.0000\\
    \fbox{0.4904}  & \fbox{1.8017} &   4.1615&4.2818&4.3058 \\
    \fbox{0.4937}   & \fbox{1.8251} &   4.2866&4.7287&4.8264  \\\hline
       \end{tabular}
\end{center}
\end{table}

 It can be seen from  Table \ref{tabla3}. that the spurious values disappear as the meshes are refined. This analysis suggests that the way of minimizing this risk is to take $ \sigma_E \geq 1$ and sufficiently refined meshes.    
    
\subsection{Test 3: Circular ring-shaped domain }
For the following  test, we have considered a curved domain, whose definition is given by  
$$\Omega:=\left\{(x,y)\in\mathbb{R}^2: \dfrac{1}{2}\leq x^2+y^2\leq 2\right\},$$ 
with $\nabla p\cdot \boldsymbol{n}=0$ on the boundary $\partial\O$. In this test the refinement parameter $N$, represents the number of elements intersecting the boundary. Voronoi and triangle meshes with deformed midpoints were used for this experiment (see Figure \ref{fig:meshecirc}). In Table \ref{tabla4} we present the four lowest eigenvalues $\l_{h,i}$. The table also includes the estimated orders of convergence. 

\begin{figure}[h]
	\begin{center}
			\centering\includegraphics[height=4.4cm, width=6.8cm]{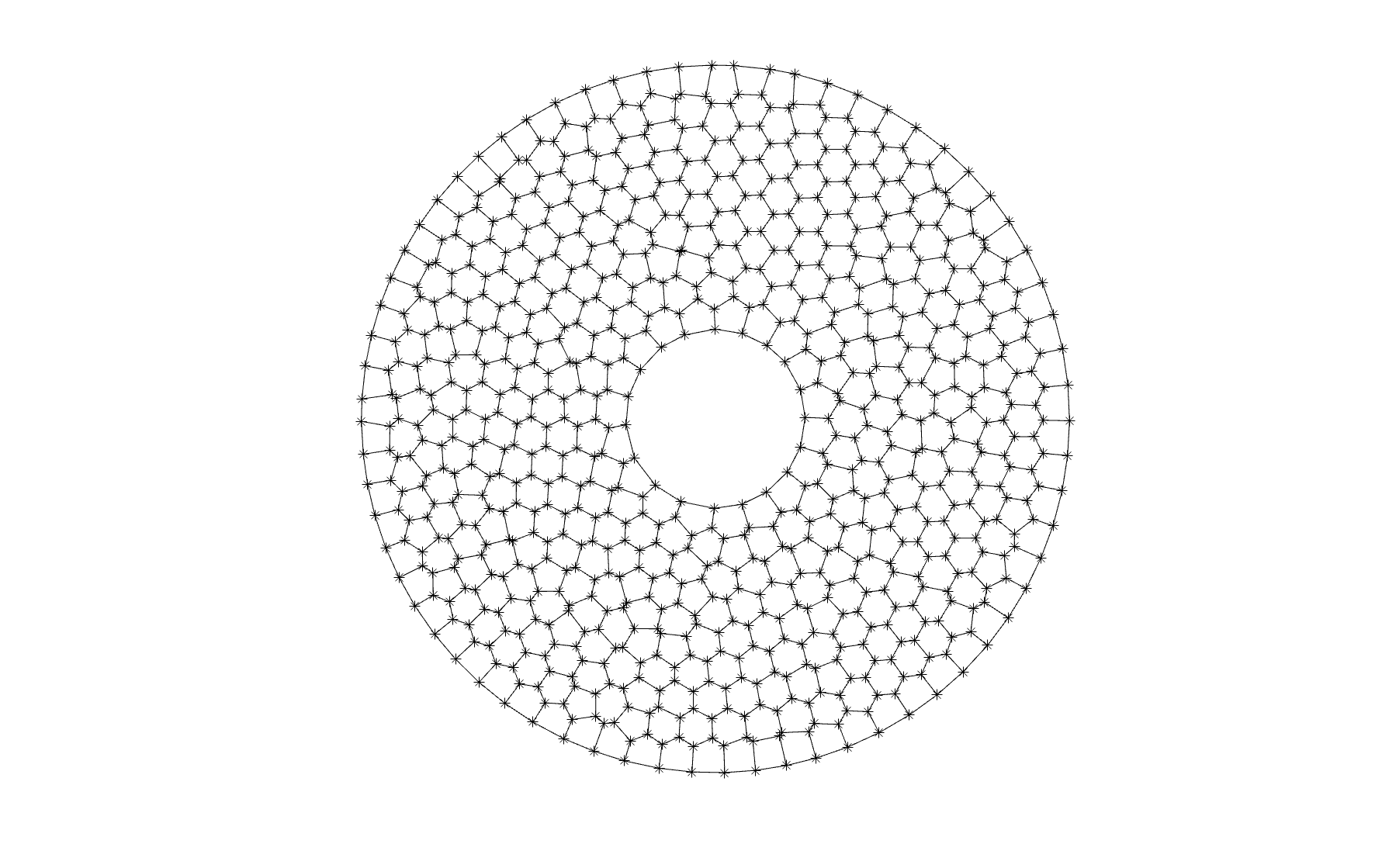}
			\centering\includegraphics[height=4.4cm, width=4.4cm]{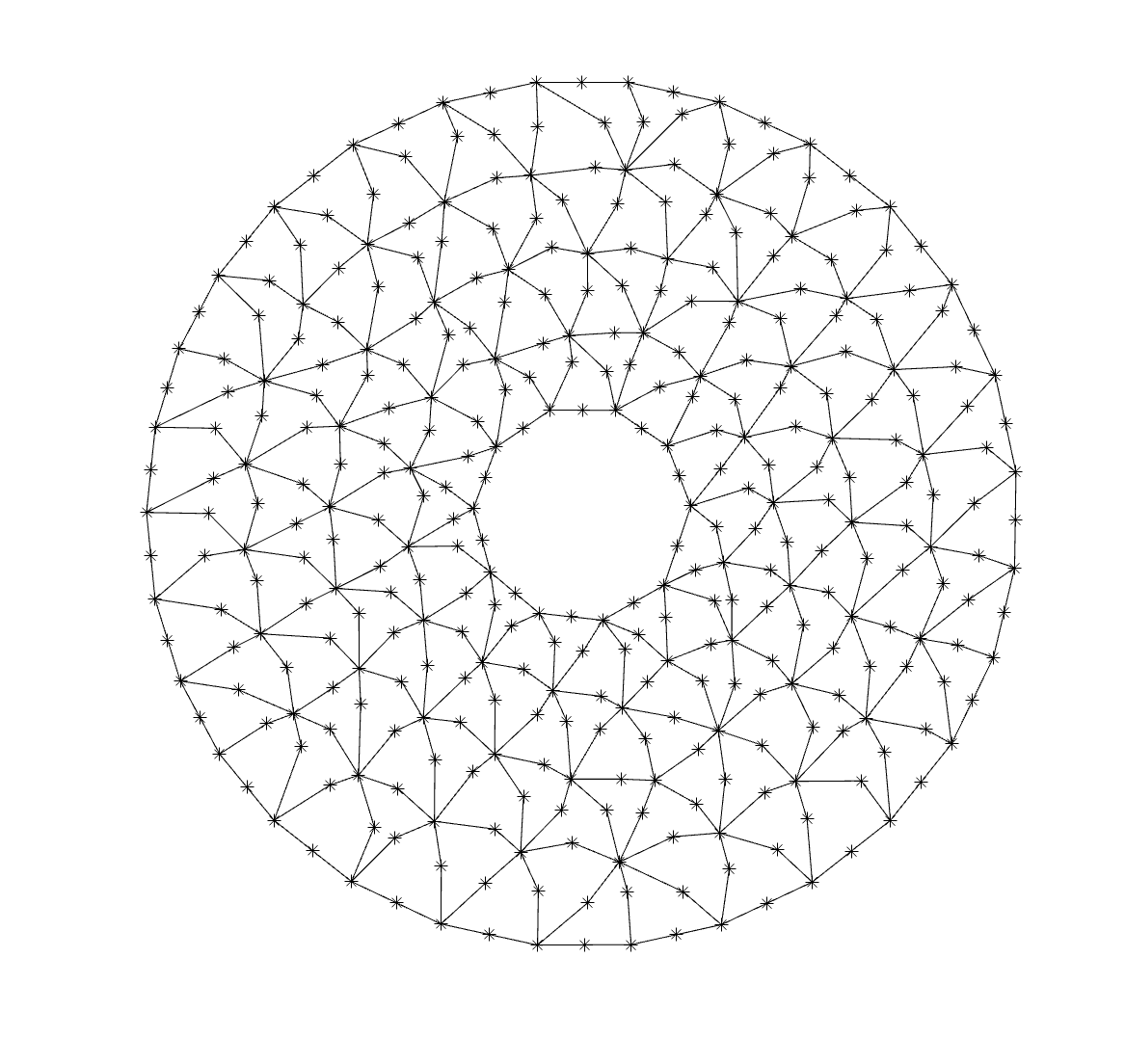}
		\caption{Sample of meshes. Left: $\mathcal{T}_{h}^{C_1}$ (left),  $\mathcal{T}_{h}^{C_2}$,   with $N = 87$ and $N = 78$, respectively.}
		\label{fig:meshecirc}
	\end{center}
\end{figure}

 \begin{table}[h]
\caption{ The lowest computed eigenvalues $\l_{h,i}$, $1\leq i\leq 4$ for  $\mathcal{T}_h^3$.}
\label{tabla4}
\begin{center}

\begin{tabular}{|c|c|c|c|c|c|c|c|} \hline
\multicolumn{8}{ |c| }{$\mathcal{T}_h^{C_1}$} \\ \hline 
 $\l_{h,i}$ & $N = 97$ & $N = 169$& $N = 236$ & $N = 258$ & $N = 295$ &Order & Extr. \\ \hline 
  $\l_{1,h}$ &  0.6714 &   0.6744 &   0.6753  &  0.6754 &   0.6755  &  1.8600   & 0.6762\\
 $\l_{2,h}$ &   0.6716  &  0.6745  &  0.6753  &  0.6754  &  0.6756  &  1.8300   & 0.6762\\
$\l_{3,h}$ &    2.2554  &  2.2614  &  2.2627  &  2.2629  &  2.2632  &  2.1400  &  2.2640\\
$\l_{4,h}$ &    2.2558  &  2.2615  &  2.2628  &  2.2630  &  2.2632  &  2.1200  &  2.2640\\\hline\hline
\multicolumn{8}{ |c| }{$\mathcal{T}_h^{C_2}$} \\ \hline 
 $\l_{h,i}$ & $N = 76$ & $N = 138$& $N = 238$ & $N = 266$ & $N = 294$ &Order & Extr. \\ \hline 
$\l_{h,1}$ & 0.6818  &  0.6778  &  0.6768  &  0.6767  &  0.6766   & 2.1400 &   0.6763\\
$\l_{h,2}$ &    0.6822 &   0.6781 &   0.6769  &  0.6767   & 0.6766  &  1.8400  &  0.6761\\
$\l_{h,3}$ &    2.2885 &   2.2721 &   2.2667 &   2.2662   & 2.2658 &   1.7900&    2.2635\\
$\l_{h,4}$ &    2.2906  &  2.2724 &   2.2669  &  2.2663  &  2.2659 &   1.9100&    2.2639\\ \hline
    \end{tabular}
\end{center}
\end{table}

Once again, a quadratic order of convergence can be clearly appreciated from Table \ref{tabla4}.
In Figures \ref{fig_cir} and \ref{fig_cir2} we present plots for the first and third  eigenfunctions  corresponding to pressures and displacement for the mesh families used. 
\begin{figure}[h]
	\begin{center}
			\centering\includegraphics[height=4.0cm, width=4.0cm]{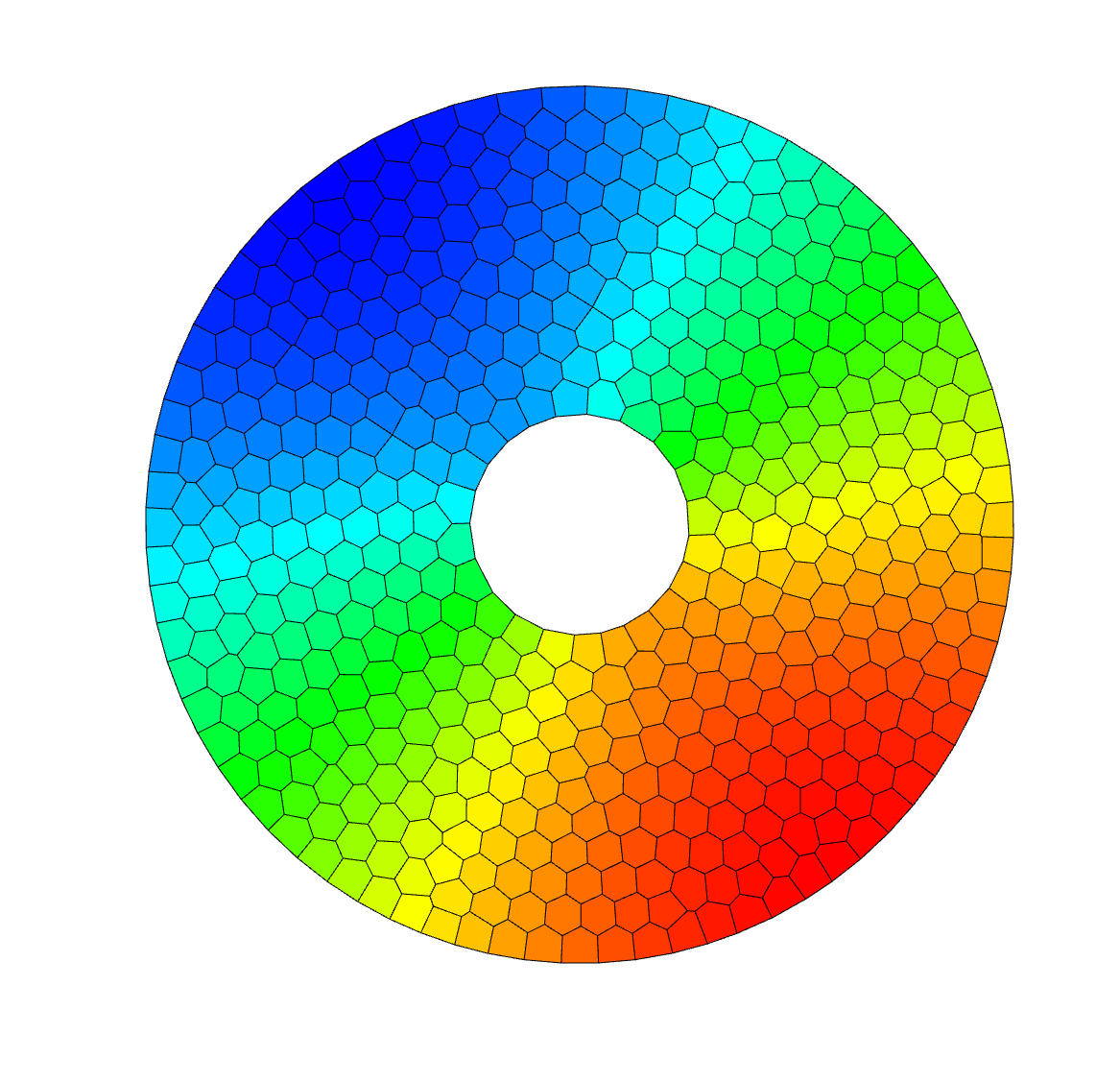}
			\centering\includegraphics[height=4.0cm, width=4.0cm]{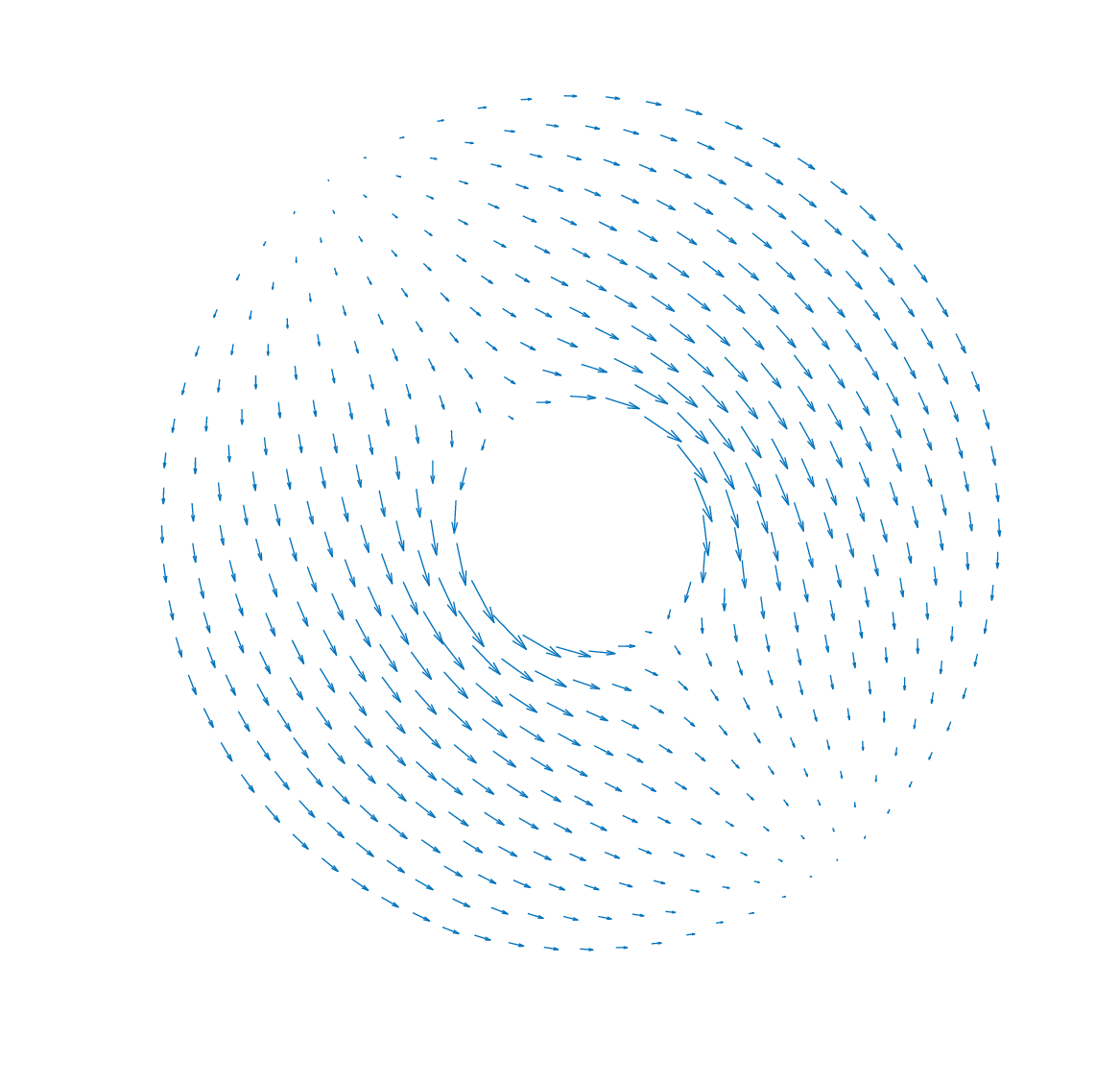}
		\caption{First  eigenfunctions  corresponding to pressures $p_{1,h}$ and displacement  $u_{1,h}$ for $\mathcal{T}_h^{C_1}$.}
		\label{fig_cir}
	\end{center}
\end{figure}
\begin{figure}[h]
	\begin{center}
			\centering\includegraphics[height=4.0cm, width=4.0cm]{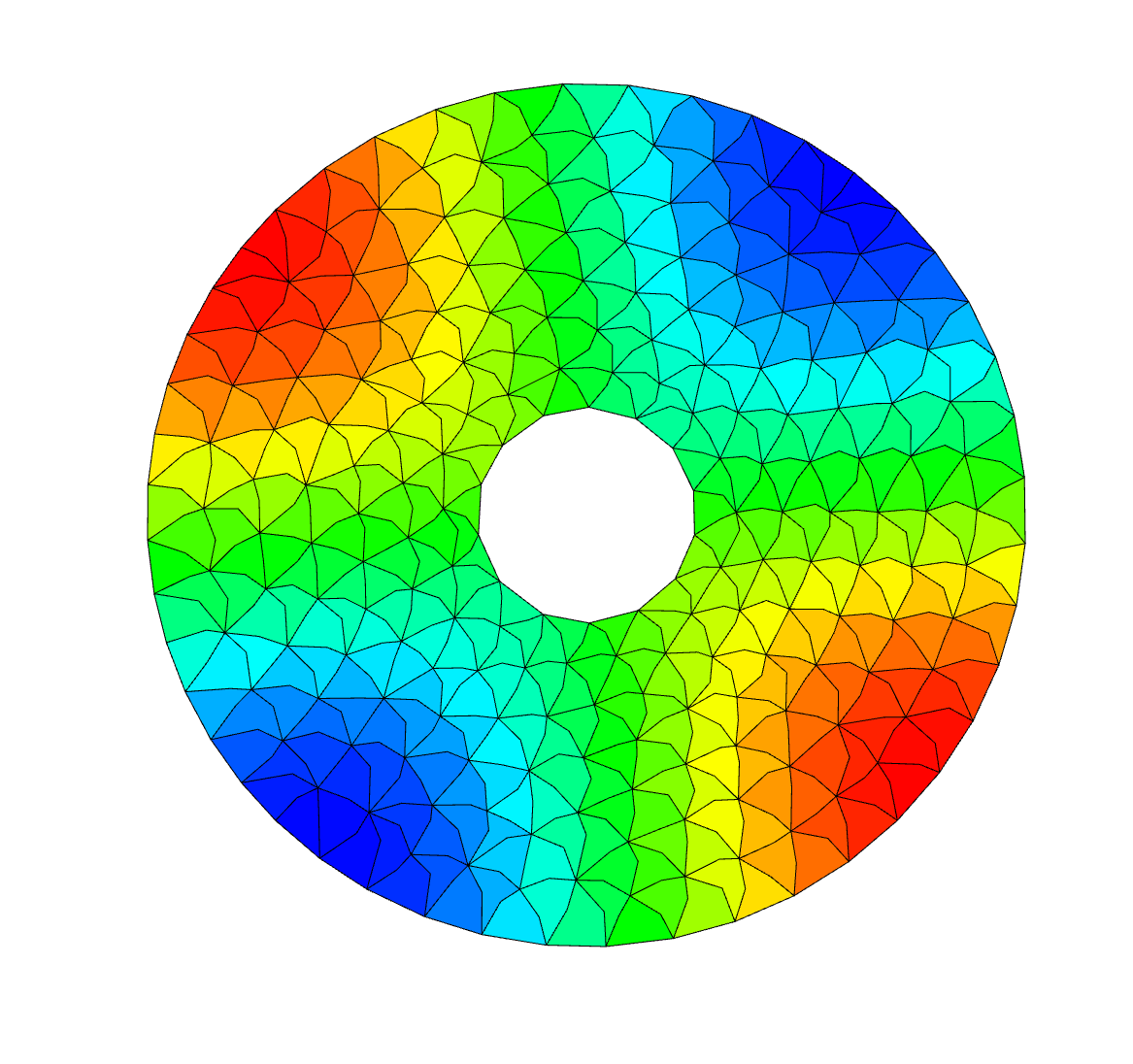}
			\centering\includegraphics[height=4.0cm, width=4.0cm]{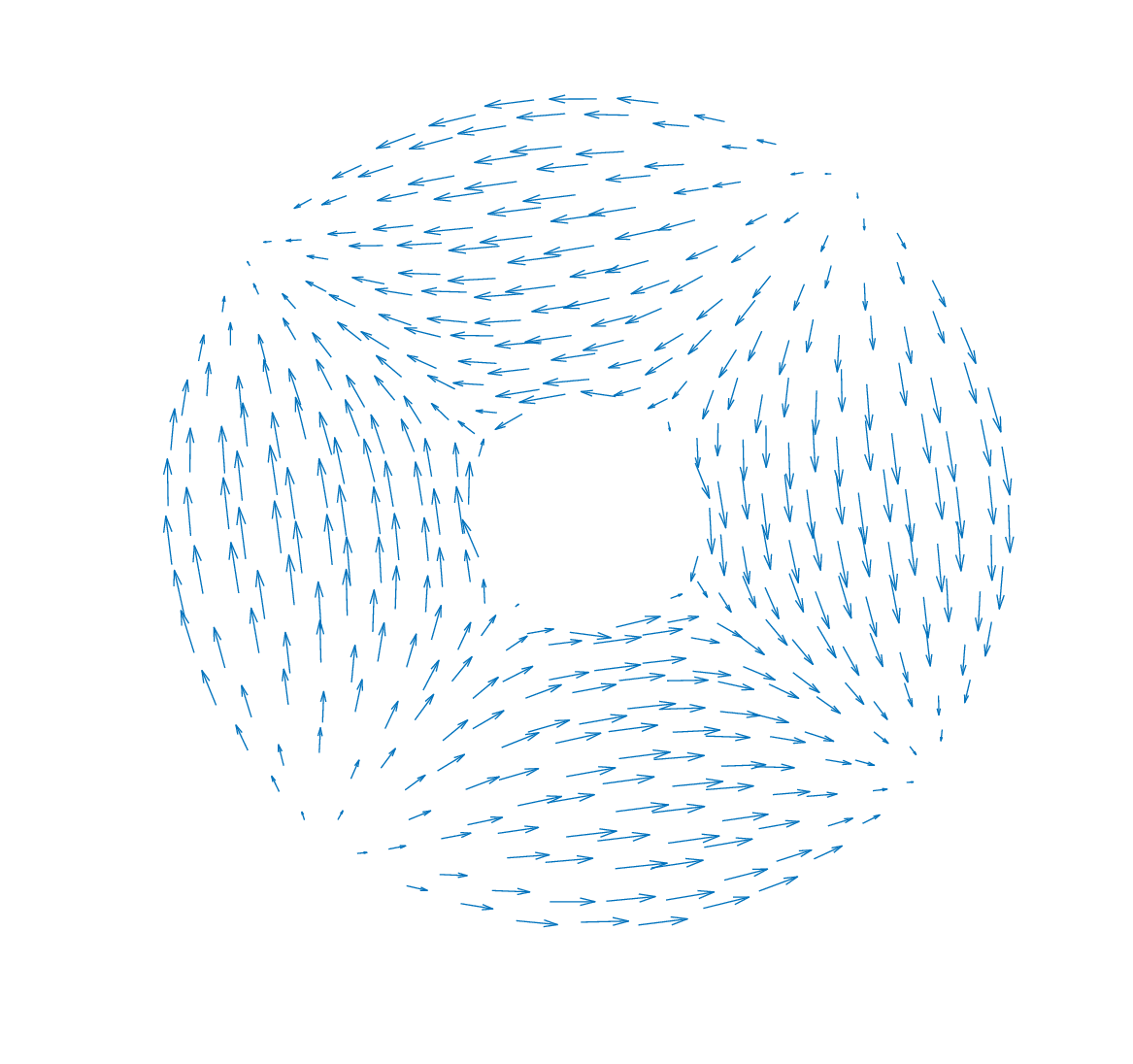}
		\caption{Third  eigenfunctions  corresponding to pressures $p_{h,3}$ and displacement  $u_{h,3}$ for $\mathcal{T}_h^{C_2}$.}
		\label{fig_cir2}
	\end{center}
\end{figure}

\subsection{Test 4: L-shaped domain}

This experiment considers a non-convex domain. We set 
$$\Omega := (-1, 1) \times (-1, 1) \setminus ([-1, 0] \times [-1, 0]),\,\, \text{with}\,\,\nabla p\cdot \boldsymbol{n}=0 \text{ on } \Gamma, $$
with corresponds to a 2D L-shaped domain. In this test, the physical constants used were those of water, i.e: $\rho=1000 kg/m^3$ and $c=1430 m/s$. 
The family of meshes to be used is shown in Figure \ref{fig:meshesL}.

\begin{figure}[h]
	\begin{center}
			\centering\includegraphics[height=4.4cm, width=4.2cm]{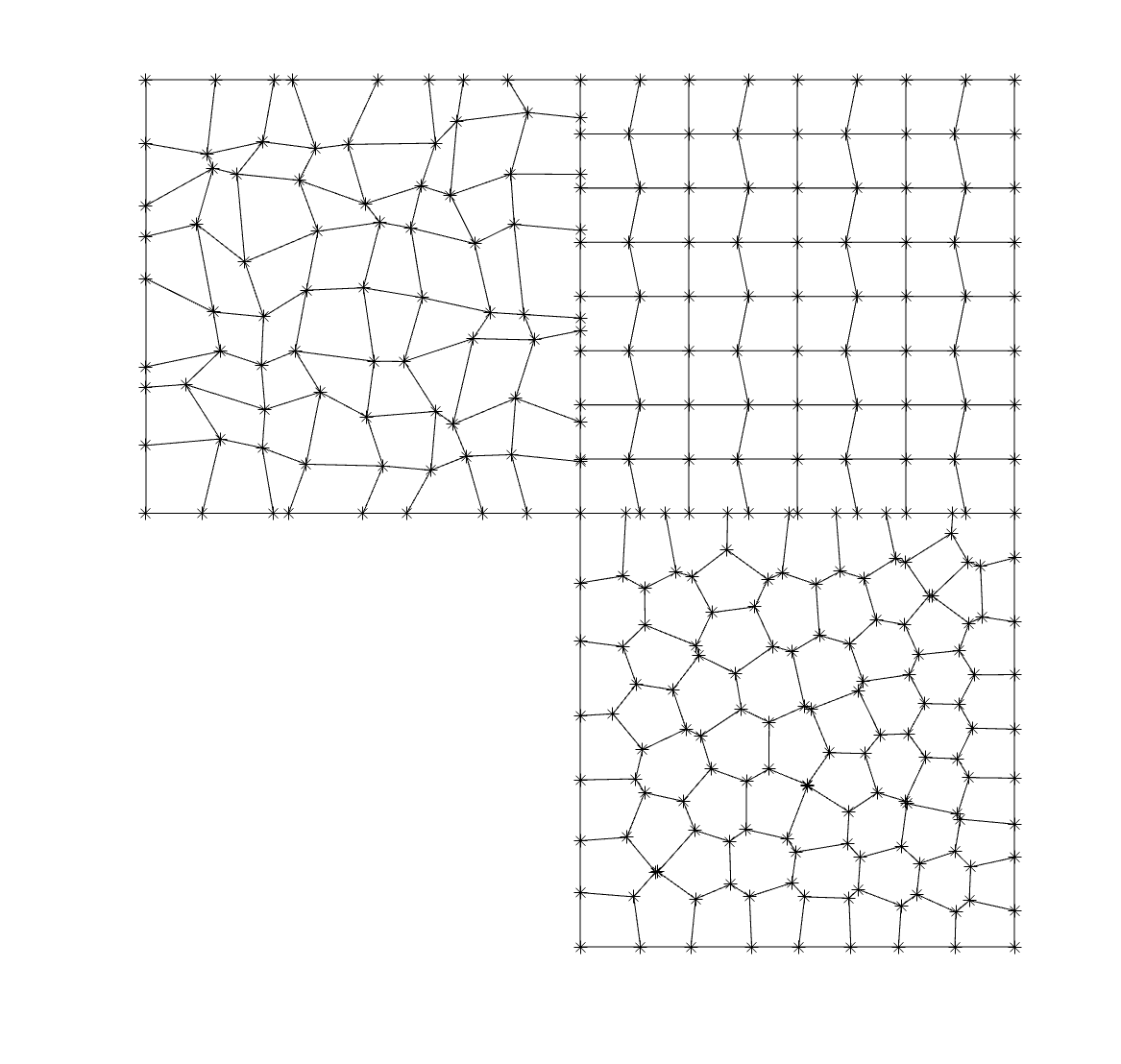}
			\centering\includegraphics[height=4.4cm, width=4.2cm]{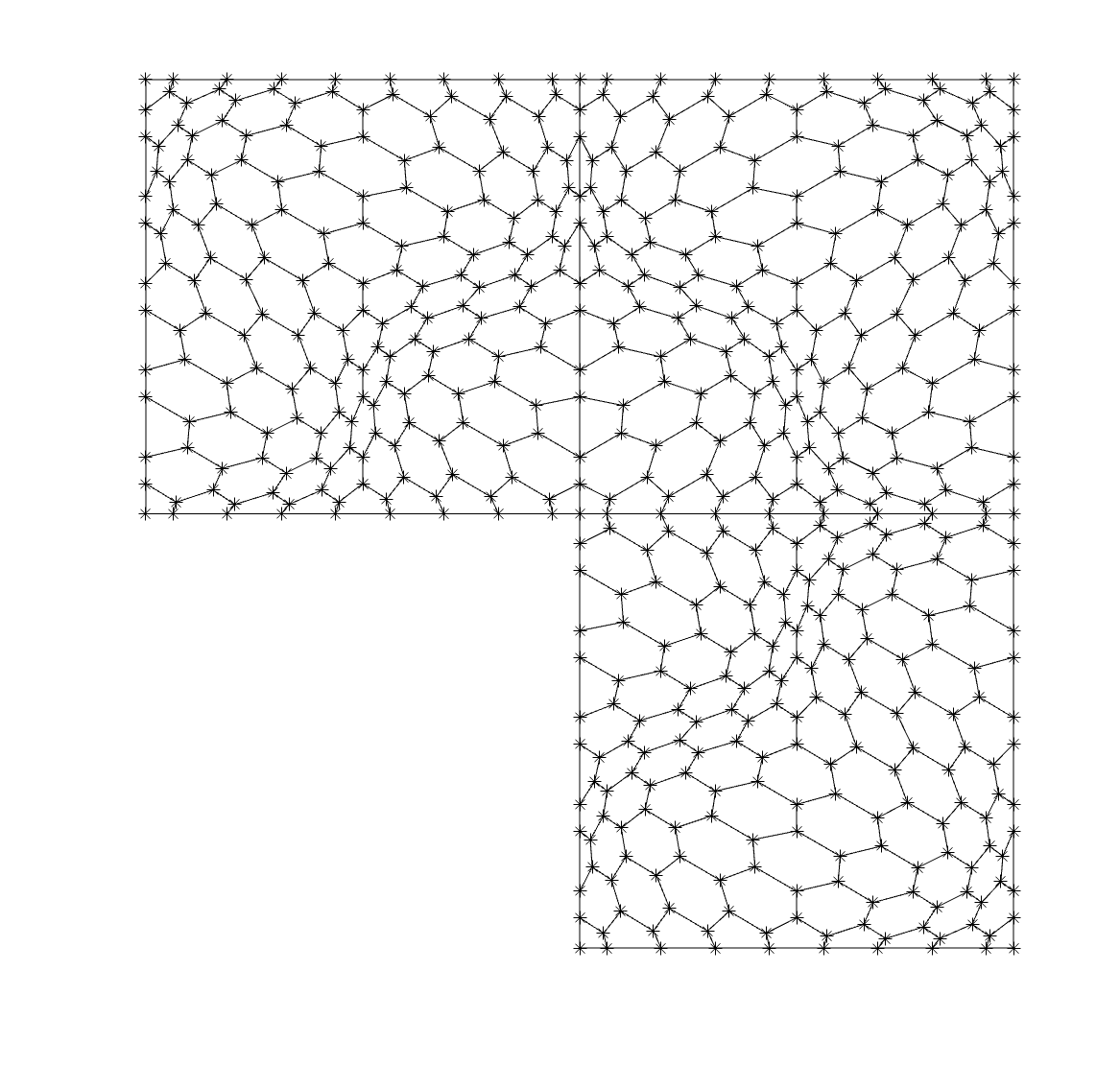}
			\centering\includegraphics[height=4.4cm, width=4.2cm]{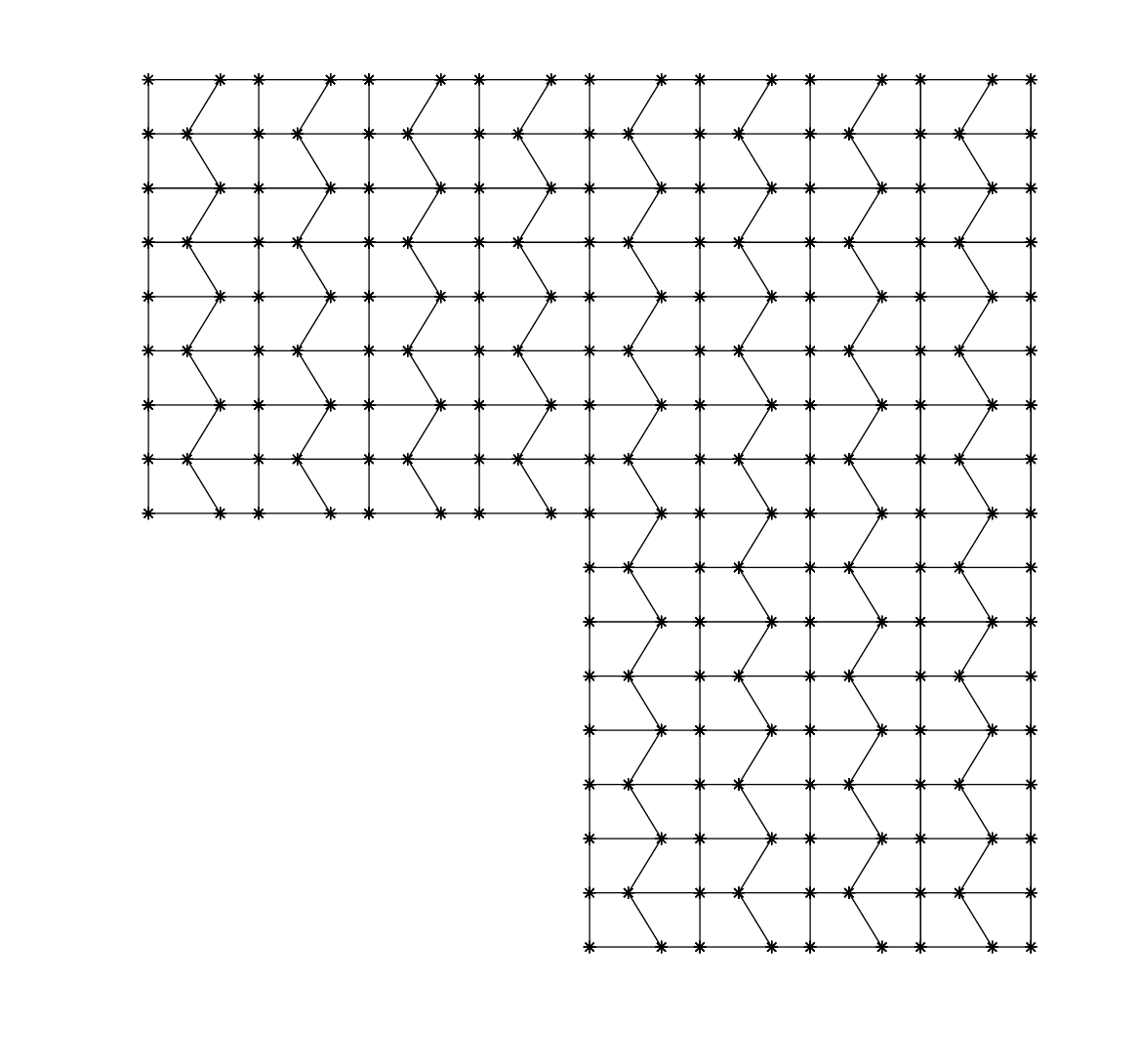}
		\caption{Sample of meshes. Left: $\mathcal{T}_{h}^{5}$ (left),  $\mathcal{T}_{h}^{6}$ (middle),  $\mathcal{T}_{h}^{7}$ (left)  with $N = 8$.}
		\label{fig:meshesL}
	\end{center}
\end{figure}
Owing to the  singularity in this specific geometric configuration, some eigenfunctions of the problem  studied display insufficient smoothness. This, in turn, results in a decrease in the convergence order of the numerical method. Since there is no precise solution for this particular geometry, our results will be compared with the extrapolated eigenvalue. In the following tables, we present the results associated with this problem configuration for different mesh families. (see Tables \ref{tabla5}, \ref{tabla6} and \ref{tabla7}).
\begin{table}[h]
\caption{ The lowest computed eigenvalues $\l_{h,i}$, $1\leq i\leq 4$ for  $\mathcal{T}_h^5$.}
\label{tabla5}
\begin{center}
\begin{tabular}{|c|c|c|c|c|c|c|c|} \hline
 $\l_{h,i}$ & $N = 9$ & $N = 19$& $N = 27$ & $N = 35$ & $N = 45$ &Order & Extr. \\ \hline 
 $\l_{1,h}$  &2.9865e6 &  3.0055e6 &  3.0106e6  & 3.0129e6  & 3.0141e6 & 1.36 &3.0176e6\\
 $\l_{2,h}$ &  7.2211e6 &  7.2250e6 &  7.2259e6 &  7.2263e6 &  7.2264e6  &1.74 &7.2268e6\\
 $\l_{3,h}$ &  2.0098e7 &  2.0162e7 &  2.0174e7  & 2.0178e7  & 2.0179e7  &2.17 &2.0182e7\\
 $\l_{4,h}$ &  2.0132e7 &  2.0165e7 &  2.0176e7  & 2.0179e7 &  2.0180e7  &1.83 &2.0182e7\\\hline
\end{tabular}
\end{center}
\end{table}

\begin{table}[h]
\caption{ The lowest computed eigenvalues $\l_{h,i}$, $1\leq i\leq 4$ for  $\mathcal{T}_h^6$.}
\label{tabla6}
\begin{center}
\begin{tabular}{|c|c|c|c|c|c|c|c|} \hline
 $\l_{h,i}$ & $N = 8$ & $N = 16$& $N = 32$ & $N = 64$  &Order & Extr. \\ \hline 
$\l_{1,h}$  & 2.9433e6 &  2.9911e6  & 3.0070e6 &  3.0135e6  & 1.52   & 3.0163e6\\
$\l_{2,h}$  &   7.2062e6 &  7.2223e6  & 7.2256e6 &  7.2265e6 &2.22       &7.2266e6\\
$\l_{3,h}$  &   1.9751e7 &  2.0095e7  & 2.0163e7  & 2.0178e7 & 2.32 &2.0181e7\\
$\l_{4,h}$  &   1.9763e7 &  2.0101e7 &  2.0163e7  & 2.0178e7 & 2.39 &2.0180e7\\\hline
\end{tabular}
\end{center}
\end{table}

\begin{table}[h]
\caption{ The lowest computed eigenvalues $\l_{h,i}$, $1\leq i\leq 4$ for  $\mathcal{T}_h^7$.}
\label{tabla7}
\begin{center}
\begin{tabular}{|c|c|c|c|c|c|c|c|} \hline
 $\l_{h,i}$ & N = 8 & N = 16& N = 32 & N = 64  &Order & Extr. \\ \hline 
$\l_{1,h}$  & 2.9478e6 &  2.9914e6 &  3.00756 &  3.0136e6 & 1.43&  3.0171e6\\
$\l_{2,h}$  &    7.2015e6 &  7.2204e6   &7.2249e6  & 7.2263e6 &  2.02 &7.2266e6\\
$\l_{3,h}$  &    1.9738e7 &  2.0088e7&   2.0159e7&   2.0177e7 & 2.26&  2.0180e7\\
$\l_{4,h}$  &    1.9814e7 &  2.0105e7 &  2.0162e7&   2.0177e7 &2.29 &  2.0179e7\\\hline
\end{tabular}
\end{center}
\end{table}
As can be seen from Tables \ref{tabla5}, \ref{tabla6}, and \ref{tabla7} the order of the eigenvalues decays for the first eigenvalue; however, the other eigenvalues maintain the order 2. This is the expected behavior, which holds for any polygonal mesh under consideration. We compare the results for each mesh with the last column on the aforementioned tables where we report the extrapolated values obtained with the least square fitting, noting that are close to this extrapolated values. Finally, in Figure \ref{fig:p&u_L} we present plots of the pressure and displacement for the  three smallest eigenvalues, obtained with the different mesh families.
\begin{figure}[h]
	\begin{center}
			\centering\includegraphics[height=4.0cm, width=4.0cm]{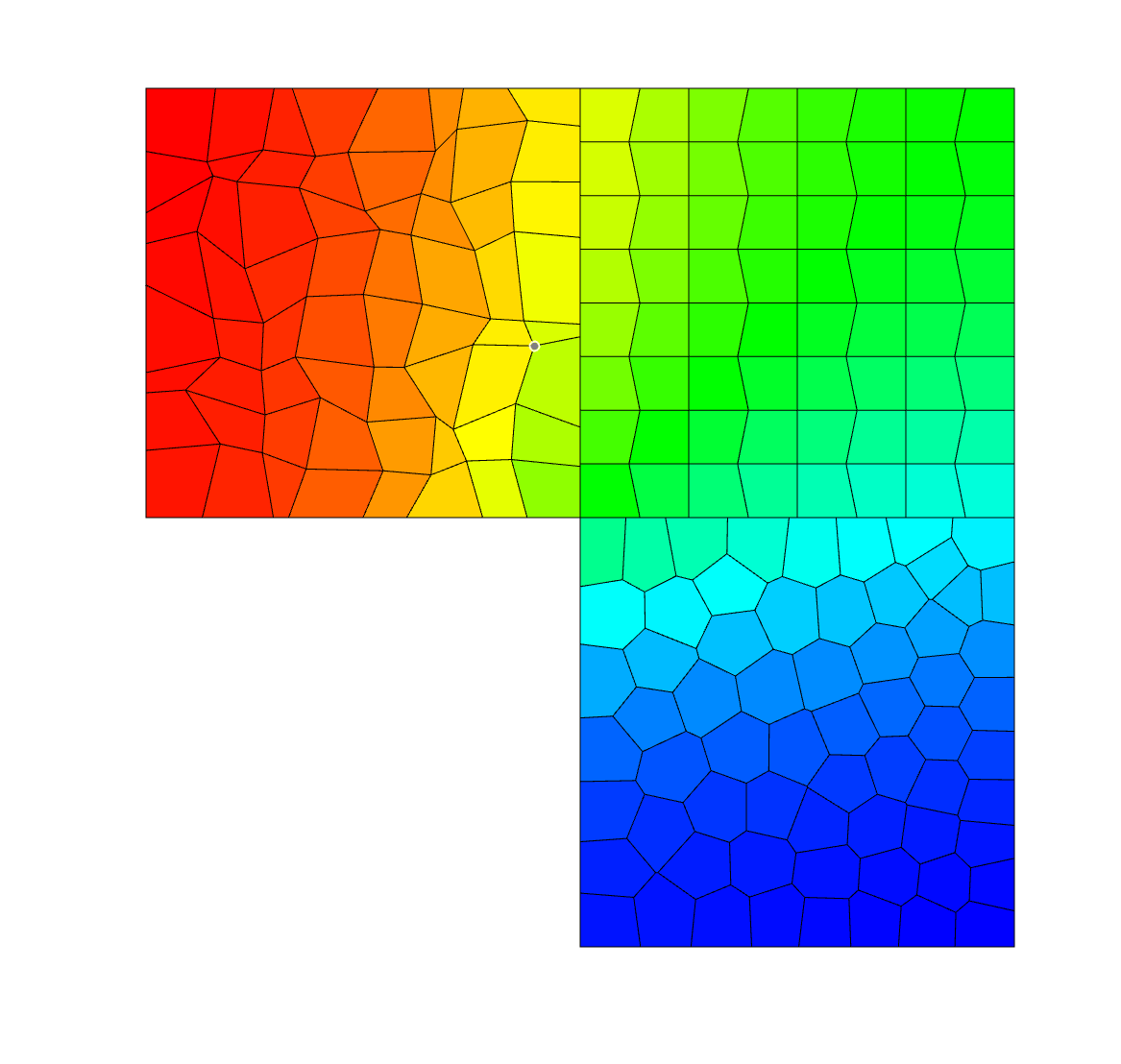}
			\centering\includegraphics[height=4.0cm, width=4.0cm]{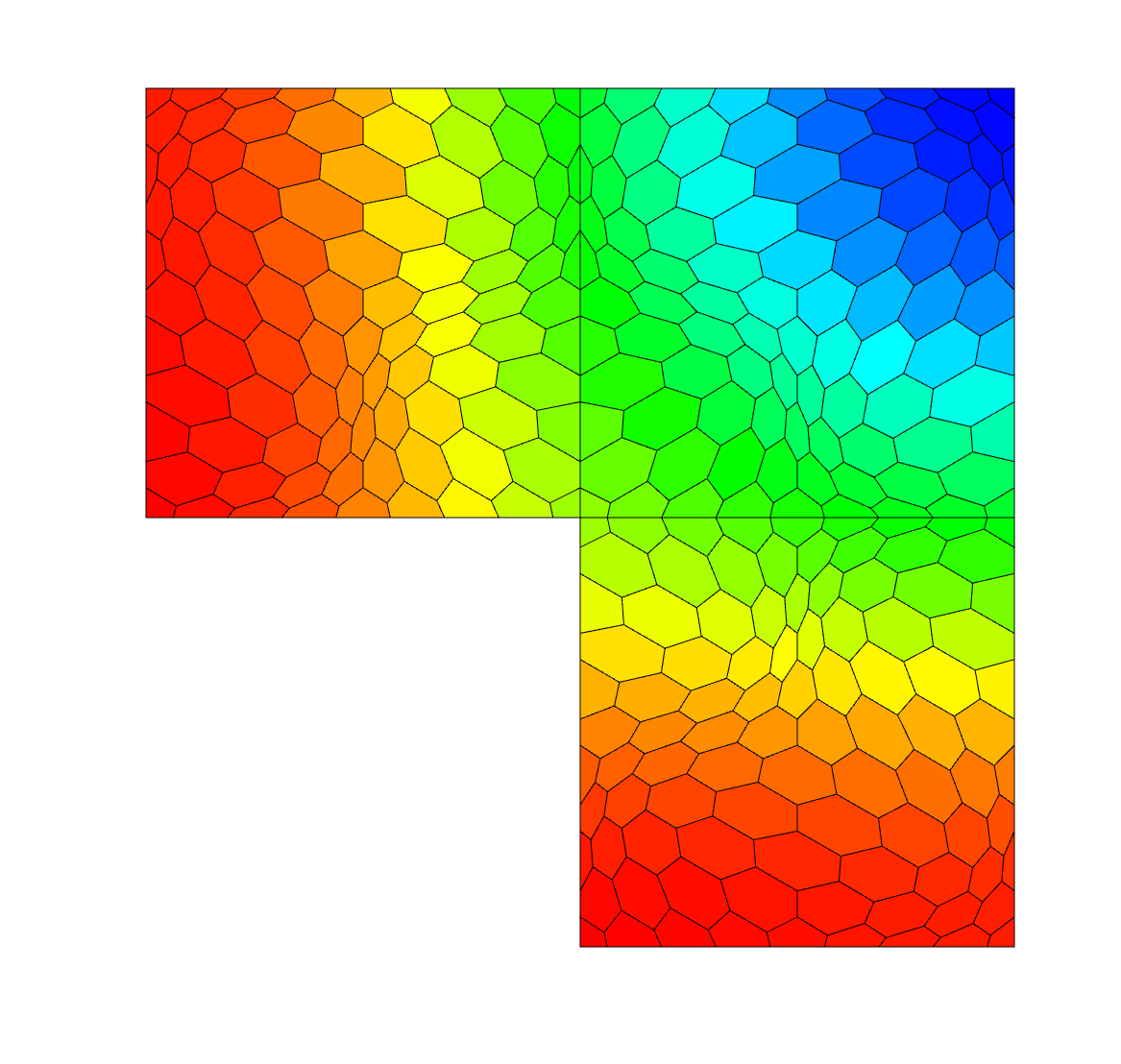}
			\centering\includegraphics[height=4.0cm, width=4.0cm]{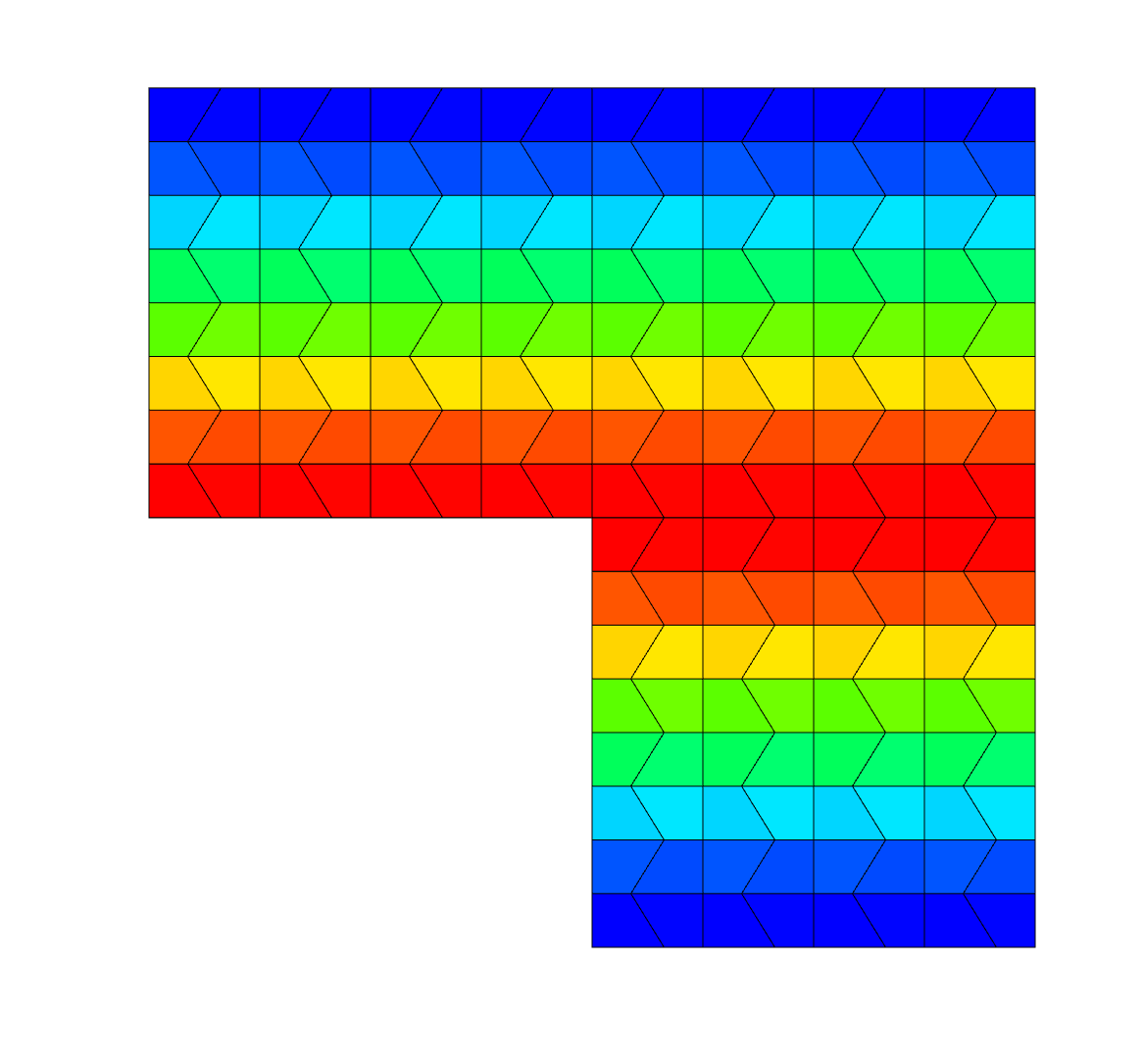}\\
			\centering\includegraphics[height=4.0cm, width=4.0cm]{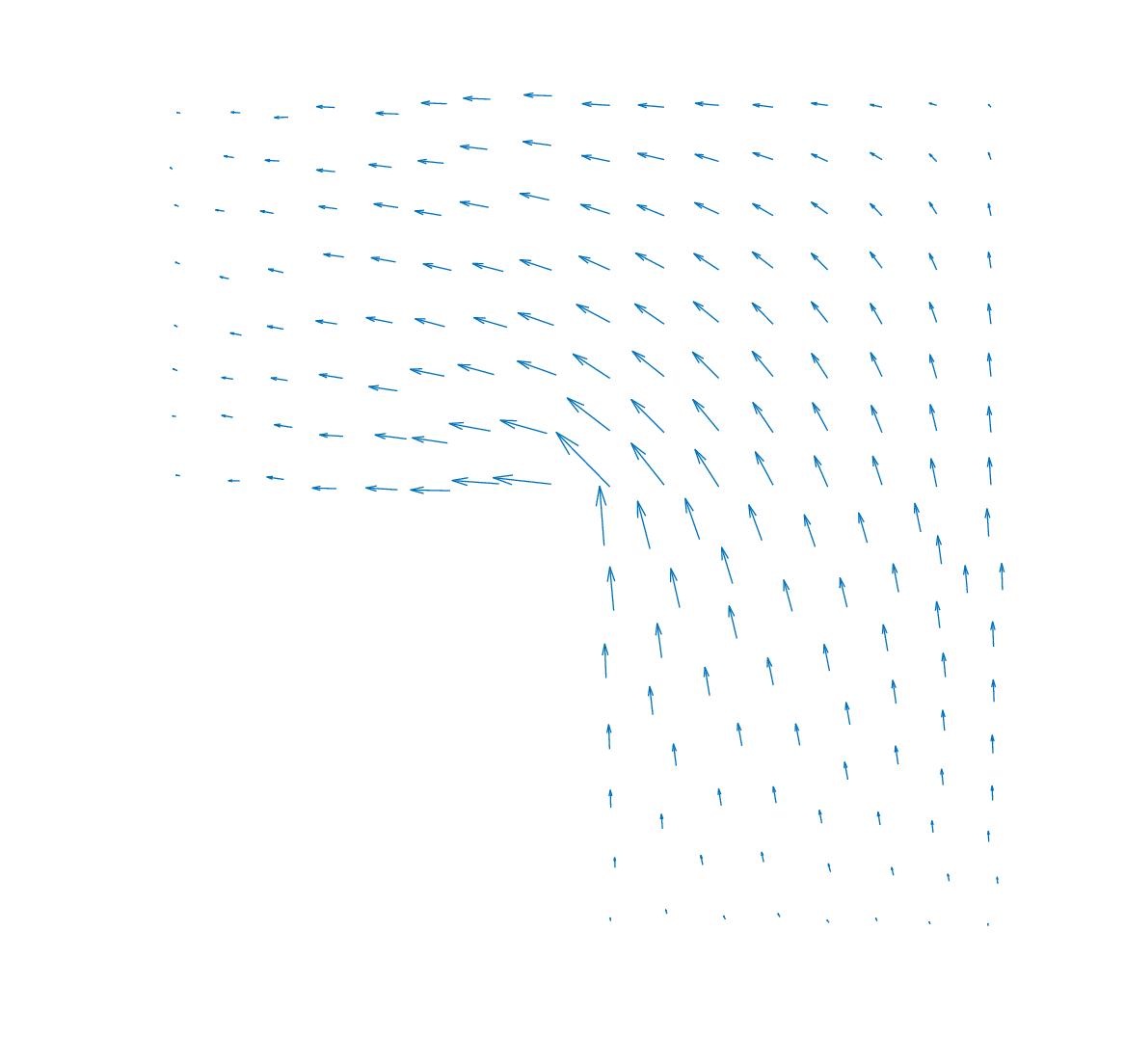}
			\centering\includegraphics[height=4.0cm, width=4.0cm]{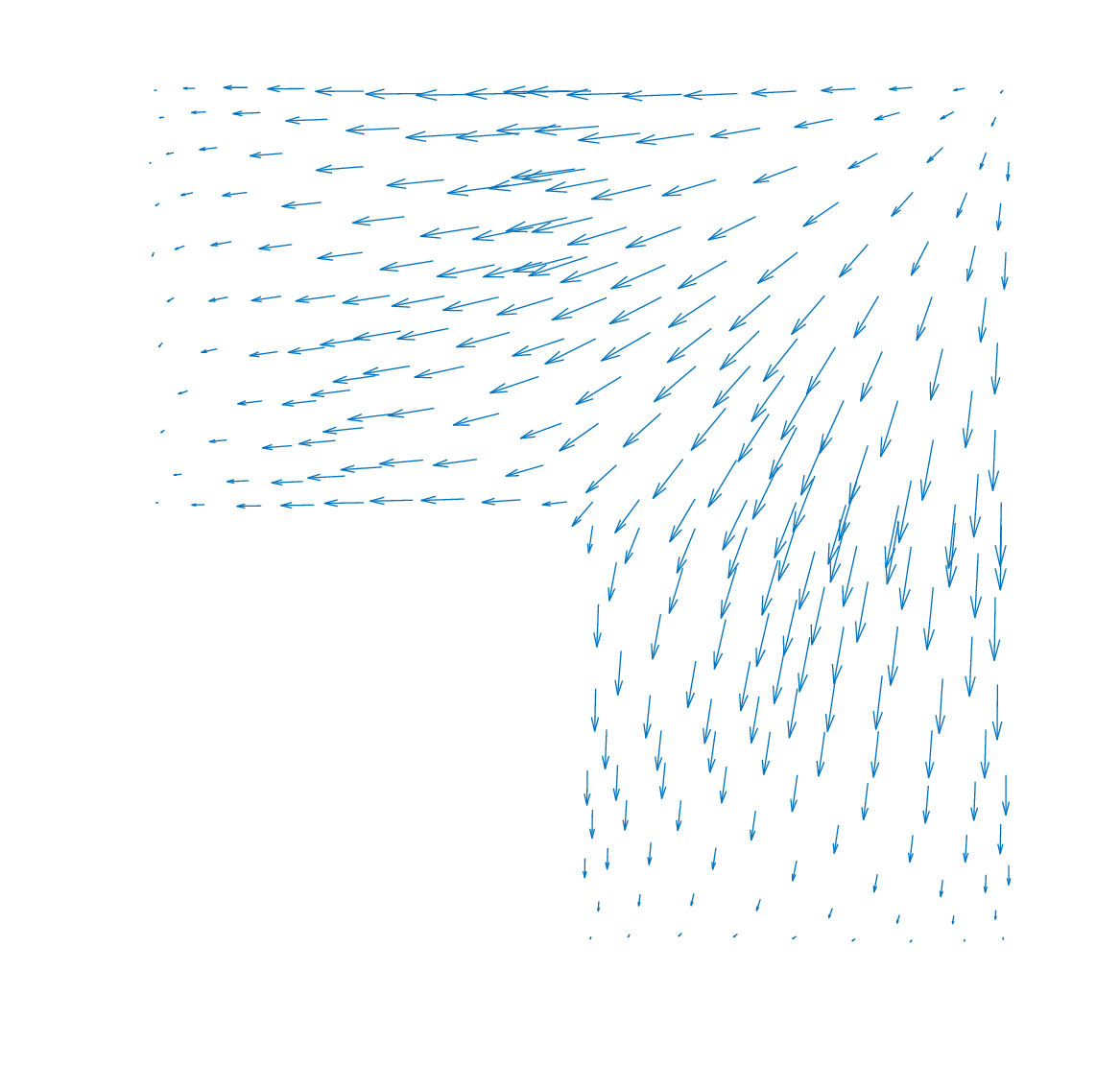}
			\centering\includegraphics[height=4.0cm, width=4.0cm]{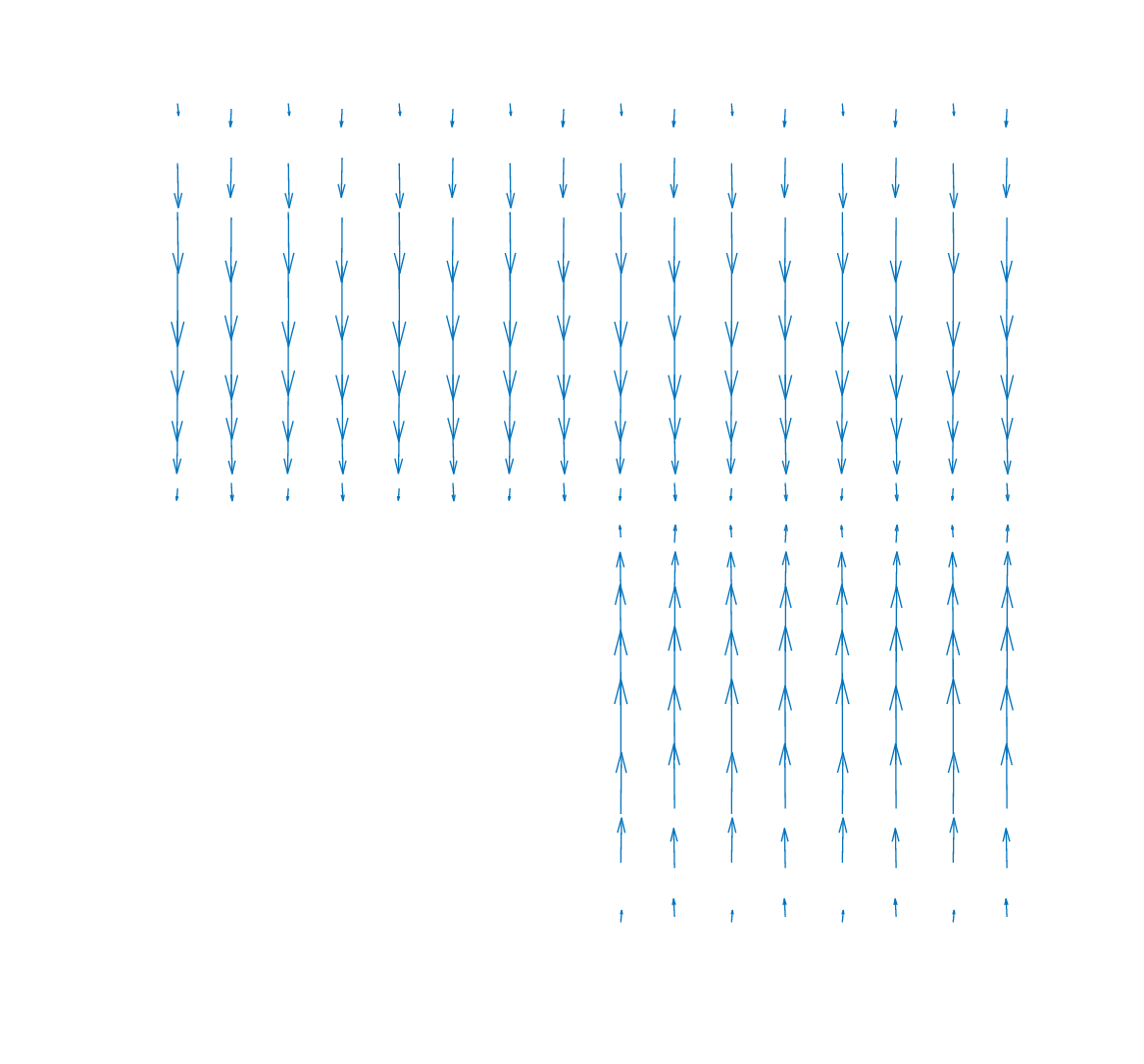}
		\caption{First, second and third  eigenfunctions on the rectangular acoustic cavity with air and water interaction  and the corresponding displacements obtained for this test. Top row: $p_{1,h}$, $p_{2,h}$ and $p_{3,h}$; bottom row: corresponding displacement fields $u_{1,h}$, $u_{2,h}$ and $u_{3,h}$ for  different mesh families.}
		\label{fig:p&u_L}
	\end{center}
\end{figure}
\section{Conclusions}
We have analyzed a non-conforming virtual element method for the acoustic problem with the pure pressure formulation. The method shows accuracy on the computation of the spectrum and, as is expected, is capable to approximate this spectrum with no spurious eigenvalues, after a correct choice of the stabilization parameter. In fact, we see on the numerical tests that is sufficient to take this parameter equal to one to perform safely the method. as the literature indicates. The convergence rates are the theoretically expected for the lowest order of approximation, independently of the polygonal mesh under use. As a final comment, it is important to note that all the of this paper results  can be naturally extended  t three dimensional case.
\bibliographystyle{siamplain}
\bibliography{AmLeRi_7}

\begin{thebibliography}{10}

\bibitem{ADAK}
{\sc D.~Adak, D.~Mora, and I.~Velásquez}, {\em Nonconforming virtual element
  discretization for the transmission eigenvalue problem}, Comput. Math. Appl.,
  152 (2023), pp.~250--267,
  \url{https://doi.org/https://doi.org/10.1016/j.camwa.2023.10.032}.

\bibitem{AABMR13}
{\sc B.~Ahmad, A.~Alsaedi, F.~Brezzi, L.~D. Marini, and A.~Russo}, {\em
  Equivalent projectors for virtual element methods}, Comput. Math. Appl., 66
  (2013), pp.~376--391, \url{https://doi.org/10.1016/j.camwa.2013.05.015}.

\bibitem{amigo2023vem}
{\sc D.~Amigo, F.~Lepe, and G.~Rivera}, {\em Vem allowing small edges for the
  acoustic problem}, 2023, \url{https://arxiv.org/abs/2310.07955}.

\bibitem{MR4658607}
{\sc D.~Amigo, F.~Lepe, and G.~Rivera}, {\em A virtual element method for the
  elasticity spectral problem allowing for small edges}, J. Sci. Comput., 97
  (2023), pp.~Paper No. 54, 29,
  \url{https://doi.org/10.1007/s10915-023-02372-6}.

\bibitem{MR3507277}
{\sc B.~Ayuso~de Dios, K.~Lipnikov, and G.~Manzini}, {\em The nonconforming
  virtual element method}, ESAIM Math. Model. Numer. Anal., 50 (2016),
  pp.~879--904, \url{https://doi.org/10.1051/m2an/2015090}.

\bibitem{BO}
{\sc I.~Babu\v{s}ka and J.~Osborn}, {\em Eigenvalue problems}, vol.~II of
  Handb. Numer. Anal., North-Holland, Amsterdam, 1991.

\bibitem{BBCMMR2013}
{\sc L.~Beir\~{a}o~da Veiga, F.~Brezzi, A.~Cangiani, G.~Manzini, L.~D. Marini,
  and A.~Russo}, {\em Basic principles of virtual element methods}, Math.
  Models Methods Appl. Sci., 23 (2013), pp.~199--214,
  \url{https://doi.org/10.1142/S0218202512500492}.

\bibitem{BLR2017}
{\sc L.~Beir\~{a}o~da Veiga, C.~Lovadina, and A.~Russo}, {\em Stability
  analysis for the virtual element method}, Math. Models Methods Appl. Sci., 27
  (2017), pp.~2557--2594, \url{https://doi.org/10.1142/S021820251750052X}.

\bibitem{BMRR}
{\sc L.~Beir\~{a}o~da Veiga, D.~Mora, G.~Rivera, and R.~Rodr\'{\i}guez}, {\em A
  virtual element method for the acoustic vibration problem}, Numer. Math., 136
  (2017), pp.~725--763, \url{https://doi.org/10.1007/s00211-016-0855-5}.

\bibitem{MR1342293}
{\sc A.~Berm\'{u}dez, R.~Dur\'{a}n, M.~A. Muschietti, R.~Rodr\'{\i}guez, and
  J.~Solomin}, {\em Finite element vibration analysis of fluid-solid systems
  without spurious modes}, SIAM J. Numer. Anal., 32 (1995), pp.~1280--1295,
  \url{https://doi.org/10.1137/0732059}.

\bibitem{MR1770352}
{\sc A.~Berm\'{u}dez, R.~G. Dur\'{a}n, R.~Rodr\'{\i}guez, and J.~Solomin}, {\em
  Finite element analysis of a quadratic eigenvalue problem arising in
  dissipative acoustics}, SIAM J. Numer. Anal., 38 (2000), pp.~267--291,
  \url{https://doi.org/10.1137/S0036142999360160}.

\bibitem{MR1993937}
{\sc A.~Berm\'{u}dez, P.~Gamallo, L.~Hervella-Nieto, and R.~Rodr\'{\i}guez},
  {\em Finite element analysis of pressure formulation of the elastoacoustic
  problem}, Numer. Math., 95 (2003), pp.~29--51,
  \url{https://doi.org/10.1007/s00211-002-0414-0}.

\bibitem{MR2086168}
{\sc A.~Berm\'{u}dez, P.~Gamallo, and R.~Rodr\'{\i}guez}, {\em Finite element
  methods in local active control of sound}, SIAM J. Control Optim., 43 (2004),
  pp.~437--465, \url{https://doi.org/10.1137/S0363012903431785}.

\bibitem{DNR1}
{\sc J.~Descloux, N.~Nassif, and J.~Rappaz}, {\em On spectral approximation.
  part 1. the problem of convergence}, ESAIM: Mathematical Modelling and
  Numerical Analysis-Mod{\'e}lisation Math{\'e}matique et Analyse
  Num{\'e}rique, 12 (1978), pp.~97--112.

\bibitem{DNR2}
{\sc J.~Descloux, N.~Nassif, and J.~Rappaz}, {\em On spectral approximation.
  part 2. error estimates for the galerkin method}, RAIRO. Analyse
  num{\'e}rique, 12 (1978), pp.~113--119.

\bibitem{GMV2018}
{\sc F.~Gardini, G.~Manzini, and G.~Vacca}, {\em The nonconforming virtual
  element method for eigenvalue problems}, ESAIM Math. Model. Numer. Anal., 53
  (2019), pp.~749--774, \url{https://doi.org/10.1051/m2an/2018074}.

\bibitem{MR3867390}
{\sc F.~Gardini and G.~Vacca}, {\em Virtual element method for second-order
  elliptic eigenvalue problems}, IMA J. Numer. Anal., 38 (2018),
  pp.~2026--2054, \url{https://doi.org/10.1093/imanum/drx063}.

\bibitem{MR0775683}
{\sc P.~Grisvard}, {\em Elliptic problems in nonsmooth domains}, vol.~24 of
  Monographs and Studies in Mathematics, Pitman (Advanced Publishing Program),
  Boston, MA, 1985.

\bibitem{MR0203473}
{\sc T.~Kato}, {\em Perturbation theory for linear operators}, vol.~Band 132 of
  Grundlehren der Mathematischen Wissenschaften, Springer-Verlag, Berlin-New
  York, second~ed., 1976.

\bibitem{MR4077220}
{\sc F.~Lepe and D.~Mora}, {\em Symmetric and nonsymmetric discontinuous
  {G}alerkin methods for a pseudostress formulation of the {S}tokes spectral
  problem}, SIAM J. Sci. Comput., 42 (2020), pp.~A698--A722,
  \url{https://doi.org/10.1137/19M1259535},
  \url{https://doi.org/10.1137/19M1259535}.

\bibitem{MR4284360}
{\sc F.~Lepe, D.~Mora, G.~Rivera, and I.~Vel\'{a}squez}, {\em A virtual element
  method for the {S}teklov eigenvalue problem allowing small edges}, J. Sci.
  Comput., 88 (2021), pp.~Paper No. 44, 21,
  \url{https://doi.org/10.1007/s10915-021-01555-3}.

\bibitem{MR4550402}
{\sc F.~Lepe, D.~Mora, G.~Rivera, and I.~Vel\'{a}squez}, {\em A posteriori
  virtual element method for the acoustic vibration problem}, Adv. Comput.
  Math., 49 (2023), pp.~Paper No. 10, 29,
  \url{https://doi.org/10.1007/s10444-022-10003-1}.

\bibitem{MR4229296}
{\sc F.~Lepe and G.~Rivera}, {\em A virtual element approximation for the
  pseudostress formulation of the {S}tokes eigenvalue problem}, Comput. Methods
  Appl. Mech. Engrg., 379 (2021), pp.~Paper No. 113753, 21,
  \url{https://doi.org/10.1016/j.cma.2021.113753}.

\bibitem{MR4050542}
{\sc D.~Mora and G.~Rivera}, {\em {\it {A} priori} and {\it a posteriori} error
  estimates for a virtual element spectral analysis for the elasticity
  equations}, IMA J. Numer. Anal., 40 (2020), pp.~322--357,
  \url{https://doi.org/10.1093/imanum/dry063}.

\bibitem{MR3340705}
{\sc D.~Mora, G.~Rivera, and R.~Rodr\'{\i}guez}, {\em A virtual element method
  for the {S}teklov eigenvalue problem}, Math. Models Methods Appl. Sci., 25
  (2015), pp.~1421--1445, \url{https://doi.org/10.1142/S0218202515500372}.

\bibitem{HYBi2017NonConf}
{\sc Y.~Yang, J.~Han, and H.~Bi}, {\em Non-conforming finite element methods
  for transmission eigenvalue problem}, Comput. Methods Appl. Mech. Engrg., 307
  (2016), pp.~144--163.

\end{thebibliography}
\end{document}